\numberwithin{equation}{section}
\numberwithin{figure}{section}
\theoremstyle{plain}
\newtheorem{thm}{\protect\theoremname}[section]
  \theoremstyle{plain}
  \newtheorem{prop}[thm]{\protect\propositionname}
  \theoremstyle{definition}
  \newtheorem{defn}[thm]{\protect\definitionname}
  \theoremstyle{plain}
  \newtheorem{cor}[thm]{\protect\corollaryname}
  \theoremstyle{plain}
  \newtheorem{lem}[thm]{\protect\lemmaname}
  \theoremstyle{remark}
  \newtheorem{rem}[thm]{\protect\remarkname}
  \theoremstyle{definition}
  \theoremstyle{plain}
  \newtheorem*{thm*}{\protect\theoremname}
  \providecommand{\corollaryname}{Corollary}
  \providecommand{\definitionname}{Definition}
  \providecommand{\examplename}{Example}
  \providecommand{\lemmaname}{Lemma}
  \providecommand{\propositionname}{Proposition}
  \providecommand{\remarkname}{Remark}
\providecommand{\theoremname}{Theorem}
\begin{document}
\title{A General Theory of Tensor Products of Convex Sets in Euclidean Spaces}

\author{Maite Fern{\'a}ndez-Unzueta$^1$\and Luisa F. Higueras-Monta{\~n}o$^2$}
\address{Centro de Investigaci\'on en Matem\'aticas (Cimat), A.P. 402 Guanajuato, Gto., M\'exico}
\email{maite@cimat.mx$^1$, fher@cimat.mx$^2$ }
\begin{abstract}

We introduce both the notions of tensor product of  convex bodies that  contain  zero in the  interior, and of
 tensor product of $0$-symmetric convex bodies in Euclidean spaces. We prove  that there  is a bijection  between tensor products of $0$-symmetric convex bodies and tensor norms on finite dimensional spaces. This bijection preserves duality, injectivity and projectivity.
  We obtain a  formulation of Grothendieck`s Theorem for $0$-symmetric convex bodies and use it to give a geometric representation (up to the $K_G$-constant) of the   Hilbertian tensor product.
 We see that  the property of  having  enough symmetries is preserved by these tensor products, and
 exhibit  relations    with the    L{\"o}wner and the John ellipsoids.

 \

 \noindent{{\it Keywords:} Convex body, Tensor product of convex sets,  Tensor product of Banach spaces,  Hilbertian tensor norm, Ideals of linear operators, Grothendieck's inequality. }
\newline

\noindent{{\it 2000 Mathematics Subject Classification:} 46M05, 52A21, 47L20, 15A69.}
\end{abstract}

\maketitle

\section{Introduction}\label{sec: intr}

Given two compact convex sets $K_1$ and $K_2$, there  are several different manners to  construct  a new compact convex set that  can be considered as a   tensorial product of them. Some of these notions  were studied in relation with the so-called Choquet Theory, as done by Z. Semadeni in \cite{Semadeni1965}, I. Namioka and R.R. Phelps in \cite{Namioka1969}
 or E. Behrends and G. Wittstock in \cite{BehrendsWittstock}.
 More recently, tensor  products of convex sets  have been studied in relation with the so called  quantum information theory, as can be found in \cite{Aubrun2006}
 by G. Aubrun and S. Szarek. Other classes of tensor products as well as some properties of those already mentioned can be found in \cite{Davies1969,hulanickiphelps,lazar,Velasco2015}.

 In a previous paper, we characterized when a centrally symmetric convex body $B$ in the Euclidean space $\mathbb{R}^d$ of dimension $d=d_1\cdots d_l$, is the unit ball of a Banach space whose norm is a reasonable crossnorm (see \cite[Theorem 3.2]{tensorialbodies}). That is,  we showed how  to determine in purely geometric terms, if $B$ is the unit ball of a reasonable crossnorm on a tensor product space $\otimes_{i=1}^{l}\left(\mathbb{R}^{d_i},\Vert\cdot\Vert_i\right),$ when the norms on each factor are not determined \textit{a priori}.  In this same line,  we develop here a theory of tensor products of centrally symmetric convex bodies, which is
consistent with the theory of tensor norms on  finite dimensional Banach spaces. Our main result is that there  is a bijection  between both, tensor norms on finite dimensions and tensor products of $0$-symmetric convex bodies. Moreover,  this bijection preserves duality, injectivity and projectivity (Theorem \ref{thm: main theorem the bijection}).

The theory of tensor norms was mainly developed by A. Grothendieck  \cite{Grothendieck1956}. It  is a fundamental tool in the modern study of Banach spaces. Among other things, it establishes the foundations of the theory of ideals of linear operators and the local theory of Banach spaces, as can bee seen in \cite{defantfloret,DiestelJarchTong,diestelfourie,Ryan2013,Tomczak-Jaegermann1989}. Its influence extends to a wide range of areas from Mathematical Analysis to Graph Theory or Computer Science \cite{efremenko,KhotNaor,pisier}. When  the  spaces are finite dimensional,   Theorem \ref{thm: main theorem the bijection} translates  this tool  into  the context of   Convex Geometry.  Indeed,  a geometric formulation of   Grothendieck`s Theorem is provided in Section 4.

We now briefly expose the contents of the paper. We begin by developing   a theory of tensor products of convex bodies with $0$ in the interior.  In this part (Section \ref{sec: tensorial convex bodies}) we do not assume that the sets are centrally symmetric. In this study, two particular tensor products play a fundamental role, namely the projective  and the injective tensor products $\otimes_{\pi},\,\otimes_{\epsilon}$. Their main properties, that  will be used frequently afterwards, are stated.

    If $P_{i}\subset\mathbb{E}_{i}$, $\dim\mathbb{E}_i={d_i},$ $i=1,...,l,$ are convex bodies with zero in the interior, we  say that a compact convex set $Q\subset \mathbb{R}^{d_1\cdots d_l}$ is a {\bf tensorial convex body with respect to $P_i$, $i=1,\ldots,l,$} if  $P_{1}\otimes_{\pi}\cdots\otimes_{\pi}P_{l}\subseteq P  \subseteq P_l \otimes_{\epsilon}\cdots\otimes_{\epsilon}P_{l}.$ A \textbf{tensor product of convex bodies} $\otimes_{\alpha}$ (Definition \ref{defn: tensor prod of cb}) assigns  to each $l$-tuple as before, a tensorial convex body  with respect to $P_i$, $P_{1}\otimes_{\alpha}\cdots\otimes_{\alpha}
  P_{l}\subset\otimes_{i=1}^{l}\mathbb{\mathbb{E}}_{i}$ that satisfies  the following uniform property:
   For every linear mapping $T_{i}:\mathbb{E}_{i}\rightarrow\mathbb{F}_{i},$ $i=1,..,l.$ If $P_i\subset\mathbb{E}_i$, $Q_i\subset\mathbb{F}_i,$ $i=1,\ldots,l,$ are convex bodies with $0$ in the interior and $T_i\left(P_i\right)\subseteq Q_i,$ then
$$
T_{1}\otimes\cdots\otimes T_{l}\left(P_{1}\otimes_{\alpha}\cdots\otimes_{\alpha}P_{l}\right)\subseteq Q_{1}\otimes_{\alpha}\cdots\otimes_{\alpha}Q_{l}.
$$
An important fact derived from such uniform property is that tensor products of convex bodies  are invariant under the tensor product of linear isomorphisms (see Proposition \ref{prop:linear invariance of products}). This also implies that, in the case of $0$-symmetric convex bodies, these tensor products are continuous with respect to the Banach-Mazur distance (Proposition \ref{prop:continuity wrt BM distance}).

In  Definiton \ref{defn: dual tensor prod of cb}, we introduce duality of tensor products    by means of   polarity. It holds that the dual $\otimes_{\alpha^\prime} $ of a tensor product of convex bodies $\otimes_{\alpha} $ is a tensor product of convex bodies, as well. It satisfies  $\otimes_{\alpha^{\prime \prime}}=  \otimes_{\alpha}. $

The injective and the projective properties of a tensor product of convex bodies are stated in terms of preservation under taking    sections and quotients, respectively (see Definition \ref{defn: injective and proj tpb}). The fundamental relation between tensor products of convex bodies and their duals is exhibited in Theorem \ref{thm:duality inj and proj}, where we prove that duals of projective tensor products of convex bodies are injective tensor products of convex bodies and vice versa.


  In Section \ref{sec: tensor product 0symm} is where we  apply these results to develop  a theory of tensor products of $0$-symmetric convex bodies. It is in  this context where we state our main result, Theorem \ref{thm: main theorem the bijection},  which provides the existence of a bijection between tensor products of $0$-symmetric convex bodies and tensor norms on finite dimensional spaces.   It  widens  the scope of the multilinear generalization of Minkowski`s bijection between norms and centrally symmetric convex bodies, established in  Theorem 3.2 and Corollary 3.4 of  \cite{tensorialbodies}.

  We study some  important geometric aspects of these tensor products, as  is the case of the Banach-Mazur distance and  the injective and the projective properties. In Subsection \ref{subs: explicit hulls} que provide explicit representations of the biggest injective and the smallest projective tensor products.
 In Subsection \ref{subs:John ellipsoid} we  study the L{\"o}wner and John ellipsoids associated to a tensor product of $0$-symmetric convex bodies. We also  prove that tensor products of convex bodies with enough symmetries have enough symmetries, too (Proposition \ref{prop:convex bodies with enough symmetries}).

  The results presented so far are valid in the case of tensor products of any fixed number of spaces $l\in\mathbb{N}$. In the case of products of two factors, there is also a bijection with the ideals of linear operators  preserving duality, injectivity and projectivity (see Corollary \ref{cor: ideals tensor norms tensor convex}).  This is studied in Section \ref{sect: Hilbertian tsr pdct},  where we develop in more detail the  Hilbertian tensor product $\omega_2$,  and give the geometric formulation of Grothendieck`s Theorem, see (\ref{eq: groth ineq elinfty}).


\subsection{Notation and preliminaires}

Throughout this paper the letters $\mathbb{E},\mathbb{F}$ or $\mathbb{E}_i,\mathbb{F}_i,$ will denote Euclidean spaces over $\mathbb{R}$. The scalar product on $\mathbb{E}$ will be denoted by $\left\langle \cdot,\cdot\right\rangle_{\mathbb{E}}$ and the associated Euclidean ball by $B_{\mathbb{E}}.$ Given a linear map $T:\mathbb{E}\rightarrow\mathbb{F},$ its transpose    $T^{t}$ is  the linear map such that $\left\langle Tx,y\right\rangle_{\mathbb{F}}=\left\langle x,T^{t}y\right\rangle_{\mathbb{E}}.$

 Given vector spaces  $V_{1},\cdots,V_l, W$ over the same field ($\mathbb{R}$ or $\mathbb{C}$)  and a multilinear mapping $T:V_{1}\times\cdots\times V_{l}\rightarrow W$, we will denote $\hat{T}:\otimes_{i=1}^{l}V_{i}\rightarrow W$ the unique linear mapping such that
  $T=\hat{T}\circ \otimes$, where $\otimes$ denotes the canonical multilinear mapping:
\begin{alignat*}{1}
\otimes:V_{1}\times\cdots\times V_{l} & \longrightarrow\otimes_{i=1}^{l}V_{i}\\
\left(x^{1},\ldots,x^{l}\right) & \rightarrow x^{1}\otimes\cdots\otimes x^{l}.
\end{alignat*}

When the spaces  $\mathbb{E}_i$,  $i=1,\ldots,l$, are  Euclidean spaces,  $\otimes_{i=1}^l\mathbb{E}_i$ has a natural scalar product: it is defined in decomposable vectors as
\[
\left\langle x^{1}\otimes\cdots\otimes x^{l},y^{1}\otimes\cdots\otimes y^{l}\right\rangle _{H}:={\Pi}_{i=1}^l\left\langle x^i,y^i\right\rangle_{\mathbb{E}_i},
\]
and extended to $\otimes_{i=1}^l\mathbb{E}_i$ by multilinearity. The space $\otimes_{H,i=1}^l\mathbb{E}_i:=\left(\otimes_{i=1}^l\mathbb{E}_i,\left\langle \cdot,\cdot \right\rangle _{H}\right)$ is called \textbf{the Hilbert tensor product of $\mathbb{E}_1,\ldots,\mathbb{E}_l$} (the details can be seen in \cite[Section 2.6]{kadisonkingrose}).
Unless otherwise is stated, we will always assume that this is the euclidean structure defined in the tensor product of euclidean spaces.

Every compact convex set  $P\subset\mathbb{E}$ with nonempty interior, $\text{int}(P)\neq\emptyset$, is called a convex body. In addition, if $P=-P$ then $P$ is called a $0$-symmetric convex body. We write $\mathcal{B}\left(\mathbb{E}\right)$ to denote the set of $0$-symmetric convex bodies contained in $\mathbb{E}.$

Given a nonempty set $C\subset\mathbb{E}$, its polar set is $C^{\circ}:=\left\{ y\in\mathbb{E}:{\sup}_{x\in C}\left\langle x,y\right\rangle _{\mathbb{E}}\leq1\right\}.$
The \textbf{Minkowski functional} (or gauge function) of $P\in\mathcal{B}\left(\mathbb{E}\right)$  is defined as
\[
g_{P}\left(x\right):=\inf\left\{ \lambda>0:\lambda^{-1}x\in P\right\} \text{ for }x\in\mathbb{E}.
\]
A fundamental result concerning  $0$-symmetric convex bodies is the bijection between norms defined on $\mathbb{E}$ and $0$-symmetric convex bodies, given by the Minkowski functional. This result, originally  due to H. Minkowksi \cite{Minkowski1927}, establishes that the map
\begin{align*}
\mathcal{B}\left(\mathbb{E}\right) & \longrightarrow\left\{\text{norms defined on }\mathbb{E}\right\}\\
P & \longrightarrow g_P\left(\cdot\right)
\end{align*}
is a bijection, the unit ball of the space $\left(\mathbb{E},g_P\right)$ is $P$ and $g_{P^{\circ}}\left(x\right)=\left\Vert \left\langle \cdot,x\right\rangle :\left(\mathbb{E},g_p\right)\rightarrow\mathbb{R}\right\Vert.$

The proof of the previous result as well as  the  fundamentals about convex bodies and Convex Geometry that will be used in this paper can be found  in  \cite{Schneider1993}.

Recall that whenever  $X_1,\ldots,X_l$ are Banach spaces,  the projective tensor norm $\pi$ and the injective tensor norm $\epsilon$ on
$\otimes_{i=1}^l{X}_i$ are defined as:
\[
\pi\left(u\right):=\inf\left\{ {\sum}_{j=1}^{n}\| x_{j}^{1}\| \cdots\| x_{j}^{l}\| :u={\sum}_{j=1}^{n}x_{j}^{1}\otimes\cdots\otimes x_{j}^{l}\right\}
\]
and
\[
\epsilon\left(u\right):=\sup\left\{ \left|x_{1}^{*}\otimes\cdots\otimes x_{l}^{*}\left(u\right)\right|:x_{i}^{*}\in B_{X_{i}^{*}},\textrm{ for }i=1,\ldots,l\right\}.
\]
for $u\in\otimes_{i=1}^{l}X_i.$

A norm $\alpha\left(\cdot\right)$ on the tensor product $\otimes_{i=1}^{l}X_{i}$ is a \textbf{reasonable crossnorm} if
\begin{equation}
\label{eq:reasonablecrossnorms def}
\epsilon\left(u\right)\leq\alpha\left(u\right)\leq\pi\left(u\right)\text{ for every }u\in\otimes_{i=1}^{l}X_{i}.
\end{equation}

Both the projective and the injective tensor norms are reasonable crossnorms. Clearly, they are, respectively, the biggest and the smallest reasonable crossnorm. We suggest the monographs \cite{defantfloret,Ryan2013} for a thorough treatment of tensor products of Banach spaces as well as for the fundamental properties of the projective and the injective tensor norms.

\section{Tensorial convex bodies and tensor products of convex bodies}\label{sec: tensorial convex bodies}

In the theory of tensor norms there is a significant  distinction between the notions of  {\sl reasonable crossnorm}  and of  {\sl tensor norm}. The first one  refers to a norm on the tensor product of some fixed Banach spaces for which (\ref{eq:reasonablecrossnorms def}) holds. The second  refers to a norm that,  in addition, can be defined  on the tensor product of any Banach spaces and such that it preserves the continuity of some mappings (the so called uniform property, see e.g. \cite[pp.10]{diestelfourie}).  This subtle distinction is also present when considering  products of convex sets (compare Definitions  \ref{def:tensorial convex bodies} and \ref{defn: tensor prod of cb} below).

In
 this section the convex sets will be
 fixed and they are not assumed to be  $0$-symmetric.

\subsection{The projective and the injective tensor product of convex bodies}\label{sec:inj proj tensor}

Given  $l$ convex bodies containing $0$ in the interior,
$P_{i}\subset\mathbb{E}_{i},$ $i=1,\ldots,l,$ their
\textbf{projective tensor product} is defined as  the following convex body in $\otimes_{i=1}^l\mathbb{E}_i$:
\begin{equation}\label{eq: proj tensor}
P_{1}\otimes_{\pi}\cdots\otimes_{\pi}P_{l}:=\text{conv}\left\{x^{1}\otimes\cdots\otimes x^{l}\in\otimes_{i=1}^{l}\mathbb{E}_{i}:x^{i}\in P_{i}, i=1,\ldots,l\right\}.
\end{equation}
In this case, $0$ is also in the interior of $P_{1}\otimes_{\pi}\cdots\otimes_{\pi}P_{l}.$
  Their\textbf{ injective tensor product} is the convex body in $\otimes_{i=1}^l\mathbb{E}_i$ defined as:
\begin{equation}\label{eq: inj tensor}
P_{1}\otimes_{\epsilon}\cdots\otimes_{\epsilon}P_{l}:
=\left(P^{\circ}_{1}\otimes_{\pi}\cdots\otimes_{\pi}P^{\circ}_{l}
\right)^{\circ}.
\end{equation}

Here, the polarity is the one determined by $\left\langle \cdot,\cdot \right\rangle _{H}.$ It holds that $0\in\text{int}(P_{1}\otimes_{\epsilon}\cdots\otimes_{\epsilon}P_{l})$, since  such  property  is preserved by taking polars   \cite[Theorem 1.6.1.]{Schneider1993}.

  These products will play a fundamental role in what follows. They  were introduced in \cite{Aubrun2006} and  \cite[Section 4.1]{Aubrun2017}, respectively.

 The projective tensor product satisfies a universal property, that we now describe. Let us consider  $l$ convex bodies with $0$ in the interior, $P_{i}\subset\mathbb{E}_{i}$ $i=1,\ldots,l.$  We say that the pair $(P, \varphi)$ of
a convex body $P\subset\mathbb{E},$ $0\in\text{int}(P),$ and a multilinear mapping $\varphi:\mathbb{E}_{1}\times\cdots\times\mathbb{E}_{l}\rightarrow\mathbb{E}$
such that $\varphi\left(P_{1},...,P_{l}\right)\subseteq P$ has
property ($\mathcal{UP}$) if:

\begin{quotation}
For every convex body $Q\subset\mathbb{F}$ with $0\in\text{int}(Q)$ and every multilinear mapping  $T:\mathbb{E}_{1}\times\cdots\times\mathbb{E}_{l}\rightarrow\mathbb{F},$
if $T\left(P_{1},...,P_{l}\right)\subseteq Q,$ there exists a unique
linear mapping  $T_{\varphi}:\mathbb{E}\rightarrow\mathbb{F}$
such that $T_{\varphi}\left(P\right)\subseteq Q$ and $T_{\varphi}\circ\varphi=T.$
\end{quotation}

\begin{prop}
\label{prop: universal property of p. proj}
(Universal property of $\otimes_{\pi}$)
Let $P_{i}\subset\mathbb{E}_{i},$ $i=1,\ldots,l,$ be convex bodies containing $0$ in the interior. Then the pair ($P_{1}\otimes_{\pi}\cdots\otimes_{\pi}P_{l}$, $\otimes$) satisfies ($\mathcal{UP}$).
\end{prop}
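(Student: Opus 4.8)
The plan is to reduce the statement to the purely algebraic universal property of the tensor product recalled in the preliminaries, together with the convexity of $Q$. First I would check that the pair $(P_{1}\otimes_{\pi}\cdots\otimes_{\pi}P_{l},\otimes)$ actually meets the standing hypotheses of $(\mathcal{UP})$: the set $P_{1}\otimes_{\pi}\cdots\otimes_{\pi}P_{l}$ is a convex body with $0$ in its interior, as noted right after (\ref{eq: proj tensor}), and directly from the definition of the projective tensor product as a convex hull of decomposable vectors one has $\otimes(P_{1},\ldots,P_{l})\subseteq P_{1}\otimes_{\pi}\cdots\otimes_{\pi}P_{l}$, since every $x^{1}\otimes\cdots\otimes x^{l}$ with $x^{i}\in P_{i}$ lies in that hull.

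Next, given any convex body $Q\subset\mathbb{F}$ with $0\in\text{int}(Q)$ and any multilinear map $T:\mathbb{E}_{1}\times\cdots\times\mathbb{E}_{l}\rightarrow\mathbb{F}$ with $T(P_{1},\ldots,P_{l})\subseteq Q$, I would invoke the algebraic universal property of $\otimes_{i=1}^{l}\mathbb{E}_{i}$ to produce the unique linear map $\hat{T}:\otimes_{i=1}^{l}\mathbb{E}_{i}\rightarrow\mathbb{F}$ with $\hat{T}\circ\otimes=T$, and set $T_{\otimes}:=\hat{T}$. The uniqueness required by $(\mathcal{UP})$ is then immediate: any linear map satisfying $T_{\otimes}\circ\otimes=T$ is already determined by its prescribed values $T(x^{1},\ldots,x^{l})$ on the decomposable vectors, and these span $\otimes_{i=1}^{l}\mathbb{E}_{i}$.

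It then remains only to verify the geometric inclusion $T_{\otimes}(P_{1}\otimes_{\pi}\cdots\otimes_{\pi}P_{l})\subseteq Q$. Here I would use that every $u\in P_{1}\otimes_{\pi}\cdots\otimes_{\pi}P_{l}$ is a finite convex combination $u=\sum_{j}\lambda_{j}\,x^{1}_{j}\otimes\cdots\otimes x^{l}_{j}$ with $\lambda_{j}\geq 0$, $\sum_{j}\lambda_{j}=1$ and $x^{i}_{j}\in P_{i}$. Applying $T_{\otimes}=\hat{T}$ and using $\hat{T}(x^{1}_{j}\otimes\cdots\otimes x^{l}_{j})=T(x^{1}_{j},\ldots,x^{l}_{j})$ yields $T_{\otimes}(u)=\sum_{j}\lambda_{j}\,T(x^{1}_{j},\ldots,x^{l}_{j})$, a convex combination of points of $T(P_{1},\ldots,P_{l})\subseteq Q$; convexity of $Q$ then gives $T_{\otimes}(u)\in Q$.

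I do not anticipate a genuine obstacle: the argument is routine once the correspondence between multilinear maps on $\mathbb{E}_{1}\times\cdots\times\mathbb{E}_{l}$ and linear maps on $\otimes_{i=1}^{l}\mathbb{E}_{i}$ is in place. The only point that genuinely uses the specific structure of $\otimes_{\pi}$ is the final inclusion, where the description of $P_{1}\otimes_{\pi}\cdots\otimes_{\pi}P_{l}$ as the convex hull of decomposable vectors lets one reduce the check to the generators and then conclude by convexity of $Q$. The main care is simply to keep the two requirements of $(\mathcal{UP})$—the algebraic identity $T_{\otimes}\circ\otimes=T$ and the geometric containment $T_{\otimes}(P)\subseteq Q$—clearly separated.
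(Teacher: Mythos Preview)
Your argument is correct and follows essentially the same route as the paper's own proof: both use the algebraic universal property to produce $\hat{T}$ and then the description of $P_{1}\otimes_{\pi}\cdots\otimes_{\pi}P_{l}$ as a convex hull of decomposable vectors, together with the convexity of $Q$, to obtain the inclusion $\hat{T}(P_{1}\otimes_{\pi}\cdots\otimes_{\pi}P_{l})\subseteq Q$. Your version is simply more explicit, in particular in spelling out the uniqueness clause, which the paper's proof leaves tacit.
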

\begin{proof}
 By definition of $\otimes_{\pi},$ $\left\{x^1\otimes\cdots\otimes x^l\in\otimes_{i=1}^l\mathbb{E}_i:x^i\in P_i\right\}\subseteq P_{1}\otimes_{\pi}\cdots\otimes_{\pi}P_{l}.$ Now, let  $T:\mathbb{E}_{1}\times\cdots\times\mathbb{E}_{l}\rightarrow\mathbb{F}$ be a multilinear mapping such that $T\left(P_{1},...,P_{l}\right)\subseteq Q$ and let $\hat{T}$ be its associated linear mapping.  The relation   $\hat{T}\circ\otimes=T$ implies that $\hat{T}\left(P_{1}\otimes_{\pi}\cdots\otimes_{\pi}P_{l}\right)\subseteq Q$.
\end{proof}

\begin{prop}(Uniqueness for the universal property)
\label{prop: * is p universal}
 Let $P_{i}\subset\mathbb{E}_{i},$ $i=1,...,l$  be convex bodies containing $0$ in the interior. If $P\subset\mathbb{E}$ is a  convex body  with $0$ in its interior, $\varphi:\mathbb{E}_{1}\times\cdots\times\mathbb{E}_{l}\rightarrow\mathbb{E}$ is a  multilinear map such that $\varphi\left(P_{1},...,P_{l}\right)\subseteq P$
and
the pair $\left(P,\varphi\right)$ satisfies
($\mathcal{UP}$), then there exists a linear isomorphism $\Psi:\mathbb{E}\rightarrow \otimes_{i=1}^{l}\mathbb{E}_{i}$
such that $\Psi\left(P\right)=P_{1}\otimes_{\pi}\cdots\otimes_{\pi}P_{l}$ and $\Psi\varphi=\otimes.$
\end{prop}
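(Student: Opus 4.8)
The plan is to exploit the universal property symmetrically. The key observation is that this is the standard ``uniqueness up to unique isomorphism'' argument familiar from category theory, adapted to our convex-body setting: whenever two objects both satisfy the same universal property, the mediating maps they induce compose to the identity.

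First I would apply the universal property of $\otimes_\pi$ (Proposition \ref{prop: universal property of p. proj}) to the multilinear map $\varphi:\mathbb{E}_1\times\cdots\times\mathbb{E}_l\rightarrow\mathbb{E}$. Since $\varphi(P_1,\ldots,P_l)\subseteq P$, this map plays the role of ``$T$'' with target convex body $P\subset\mathbb{E}$, so there exists a unique linear map $\Psi:\otimes_{i=1}^l\mathbb{E}_i\rightarrow\mathbb{E}$ with $\Psi(P_1\otimes_\pi\cdots\otimes_\pi P_l)\subseteq P$ and $\Psi\circ\otimes=\varphi$. Symmetrically, I would invoke the hypothesis that $(P,\varphi)$ satisfies $(\mathcal{UP})$, applied now to the canonical multilinear map $\otimes:\mathbb{E}_1\times\cdots\times\mathbb{E}_l\rightarrow\otimes_{i=1}^l\mathbb{E}_i$, whose image of $(P_1,\ldots,P_l)$ lands in $P_1\otimes_\pi\cdots\otimes_\pi P_l$. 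This yields a unique linear map $\Phi:\mathbb{E}\rightarrow\otimes_{i=1}^l\mathbb{E}_i$ with $\Phi(P)\subseteq P_1\otimes_\pi\cdots\otimes_\pi P_l$ and $\Phi\circ\varphi=\otimes$.

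Next I would show $\Phi$ and $\Psi$ are mutually inverse, so that one of them is the desired isomorphism. The composite $\Psi\circ\Phi:\mathbb{E}\rightarrow\mathbb{E}$ satisfies $(\Psi\circ\Phi)\circ\varphi=\Psi\circ\otimes=\varphi$, and it maps $P$ into $P$. But the identity $\mathrm{id}_\mathbb{E}$ also satisfies these two conditions. The uniqueness clause in $(\mathcal{UP})$ for the pair $(P,\varphi)$, applied to the multilinear map $\varphi$ itself with target $P$, forces $\Psi\circ\Phi=\mathrm{id}_\mathbb{E}$. An identical argument using the uniqueness in Proposition \ref{prop: universal property of p. proj} (equivalently, the linearization $\hat{T}$ of a multilinear map is unique) gives $\Phi\circ\Psi=\mathrm{id}_{\otimes_{i=1}^l\mathbb{E}_i}$, since $(\Phi\circ\Psi)\circ\otimes=\otimes$ and $\Phi\circ\Psi$ maps $P_1\otimes_\pi\cdots\otimes_\pi P_l$ into itself. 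Hence $\Psi$ is a linear isomorphism with inverse $\Phi$ and $\Psi\circ\varphi=\otimes$.

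Finally I would upgrade the inclusion $\Psi(P_1\otimes_\pi\cdots\otimes_\pi P_l)\subseteq P$ to an equality. Applying $\Phi=\Psi^{-1}$ to the inclusion $\Phi(P)\subseteq P_1\otimes_\pi\cdots\otimes_\pi P_l$ and using that $\Phi$ is invertible gives $P\subseteq\Psi(P_1\otimes_\pi\cdots\otimes_\pi P_l)$, which together with the reverse inclusion yields $\Psi(P_1\otimes_\pi\cdots\otimes_\pi P_l)=P$. The main subtlety to be careful about is the correct bookkeeping of the uniqueness statements: one must verify that the maps produced above genuinely satisfy the hypotheses of $(\mathcal{UP})$ (in particular that the relevant convex-body inclusions hold) so that the uniqueness clauses legitimately apply and collapse the composites to the identities. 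Everything else is the formal diagram-chasing that the universal property is designed to make routine.
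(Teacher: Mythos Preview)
Your argument is correct and is exactly the paper's: obtain the two comparison maps from the respective universal properties, use the uniqueness clauses to show they are mutually inverse, and then chain the convex-body inclusions to upgrade containment to equality. One minor slip in bookkeeping: you named the map $\otimes_{i=1}^{l}\mathbb{E}_i\to\mathbb{E}$ as $\Psi$, but the statement reserves $\Psi$ for the map $\mathbb{E}\to\otimes_{i=1}^{l}\mathbb{E}_i$ (your $\Phi$), so your concluding clause ``$\Psi\circ\varphi=\otimes$'' should read $\Phi\circ\varphi=\otimes$; with that relabeling everything matches.
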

\begin{proof}
Since   $\left(P,\varphi\right)$ and $(P_{1}\otimes_{\pi}\cdots\otimes_{\pi}P_{l},\otimes)$
have the property ($\mathcal{UP}$),
there exist linear mappings $S:\mathbb{E}\rightarrow \otimes_{i=1}^{l}\mathbb{E}_{i},$
 and $T:\otimes_{i=1}^{l}\mathbb{E}_{i}\rightarrow\mathbb{E}$ such that $S\circ\varphi=\otimes,$
$T\circ\otimes=\varphi,$ $S\left(P\right)\subseteq P_{1}\otimes_{\pi}\cdots\otimes_{\pi}P_{l}$
and $T\left(P_{1}\otimes_{\pi}\cdots\otimes_{\pi}P_{l}\right)\subseteq P.$
From this, the linear map $TS:\mathbb{E}\rightarrow\mathbb{E}$ verifies:
$
TS\varphi\left(x^{1},...,x^{l}\right)=\varphi\left(x^{1},...,x^{l}\right),
$
for every $x^{i}\in\mathbb{E}_{i}.$
This  equality, together with the uniqueness of the linear extension, implies that   $TS=I_{\mathbb{E}}$.
Then,
$
P_{1}\otimes_{\pi}\cdots\otimes_{\pi}P_{l}=ST(P_{1}\otimes_{\pi}\cdots\otimes_{\pi}P_{l})\subseteq S\left(P\right)\subseteq P_{1}\otimes_{\pi}\cdots\otimes_{\pi}P_{l}.
$
Thus, $S\left(P\right)=P_{1}\otimes_{\pi}\cdots\otimes_{\pi}P_{l}$ and $S\circ\varphi=\otimes.$
This completes the proof.
\end{proof}

The next property   of $\otimes_{\pi}$ and $\otimes_{\epsilon}$  will be fundamental to what follows:
\begin{prop}
 \label{prop: uniform projective injective product}
(Uniform property of $\otimes_{\pi},\otimes_{\epsilon}$)
Let $T_i:\mathbb{E}_i\rightarrow\mathbb{F}_i,$ $i=1,\ldots,l,$ be linear maps. If $P_i\subset\mathbb{E}_i$, $Q_i\subset\mathbb{F}_i$  are convex bodies containing $0$ in the interior and $T_i\left(P_i\right)\subseteq Q_i,$ $i=1,\ldots,l,$ then
\begin{equation*}
T_1\otimes\cdots\otimes T_l\left(P_{1}\otimes_{\pi}\cdots\otimes_{\pi}P_{l}\right)\subseteq Q_{1}\otimes_{\pi}\cdots\otimes_{\pi}Q_{l}
\end{equation*}
and
\begin{equation*}
T_1\otimes\cdots\otimes T_l\left(P_{1}\otimes_{\epsilon}\cdots\otimes_{\epsilon}P_{l}\right)\subseteq Q_{1}\otimes_{\epsilon}\cdots\otimes_{\epsilon}Q_{l}.
\end{equation*}
\end{prop}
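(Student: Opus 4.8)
The plan is to establish the projective inclusion directly from the convex-hull description of $\otimes_\pi$, and then to obtain the injective inclusion from it by dualizing through the defining relation $P_1\otimes_\epsilon\cdots\otimes_\epsilon P_l=(P_1^\circ\otimes_\pi\cdots\otimes_\pi P_l^\circ)^\circ$.

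For the projective part, set $L:=T_1\otimes\cdots\otimes T_l$. Being linear, $L$ commutes with the convex-hull operation, so $L(P_1\otimes_\pi\cdots\otimes_\pi P_l)=\mathrm{conv}\,L\big(\{x^1\otimes\cdots\otimes x^l:x^i\in P_i\}\big)$. On a decomposable generator one has $L(x^1\otimes\cdots\otimes x^l)=T_1x^1\otimes\cdots\otimes T_lx^l$, and the hypothesis $T_i(P_i)\subseteq Q_i$ places each factor $T_ix^i$ in $Q_i$. Thus $L$ sends every generator of $P_1\otimes_\pi\cdots\otimes_\pi P_l$ into the generating set of $Q_1\otimes_\pi\cdots\otimes_\pi Q_l$, and taking convex hulls yields the first inclusion.

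For the injective part I would argue by polarity, relying on three facts, all with respect to $\langle\cdot,\cdot\rangle_H$ and its restrictions to the factors: (i) $(L(C))^\circ=(L^t)^{-1}(C^\circ)$ for any linear map $L$ and set $C$ (immediate from the definitions of transpose and polar); (ii) $(T_1\otimes\cdots\otimes T_l)^t=T_1^t\otimes\cdots\otimes T_l^t$, verified on decomposable vectors using the formula for $\langle\cdot,\cdot\rangle_H$; and (iii) the bipolar identity $C^{\circ\circ}=C$ for a convex body $C$ with $0$ in its interior. First, applying (i) to each $T_i$ and using that polarity reverses inclusions, the hypothesis $T_i(P_i)\subseteq Q_i$ gives $Q_i^\circ\subseteq(T_i^t)^{-1}(P_i^\circ)$, that is $T_i^t(Q_i^\circ)\subseteq P_i^\circ$. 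Feeding the maps $T_i^t:\mathbb{F}_i\to\mathbb{E}_i$ and the bodies $Q_i^\circ,P_i^\circ$ into the already-proven projective inclusion produces
\[
(T_1^t\otimes\cdots\otimes T_l^t)(Q_1^\circ\otimes_\pi\cdots\otimes_\pi Q_l^\circ)\subseteq P_1^\circ\otimes_\pi\cdots\otimes_\pi P_l^\circ .
\]

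By (ii) the left-hand map is $L^t$, so this reads $Q_1^\circ\otimes_\pi\cdots\otimes_\pi Q_l^\circ\subseteq(L^t)^{-1}(P_1^\circ\otimes_\pi\cdots\otimes_\pi P_l^\circ)$, which by (i) and the definition of $\otimes_\epsilon$ equals $(L(P_1\otimes_\epsilon\cdots\otimes_\epsilon P_l))^\circ$. Taking polars once more and invoking (iii) converts this into $L(P_1\otimes_\epsilon\cdots\otimes_\epsilon P_l)\subseteq(Q_1^\circ\otimes_\pi\cdots\otimes_\pi Q_l^\circ)^\circ=Q_1\otimes_\epsilon\cdots\otimes_\epsilon Q_l$, the second inclusion. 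Throughout one uses that $0$ remains interior to every body involved, so that all polars are genuine convex bodies and (iii) applies; this is guaranteed by the remarks following the definitions of $\otimes_\pi$ and $\otimes_\epsilon$. The only step demanding real care is (ii): the whole reduction rests on $L^t$ being again a tensor product of linear maps, so that the projective case can legitimately be reused for the transposes, and I would settle it on decomposables before running the main argument.
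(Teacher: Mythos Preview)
Your proof is correct and, for the projective inclusion, identical to the paper's. For the injective inclusion the paper uses the same ingredients---$T_i(P_i)\subseteq Q_i\Rightarrow T_i^t(Q_i^\circ)\subseteq P_i^\circ$ and $(T_1\otimes\cdots\otimes T_l)^t=T_1^t\otimes\cdots\otimes T_l^t$---but unwinds them pointwise: for $z\in P_1\otimes_\epsilon\cdots\otimes_\epsilon P_l$ and $y^i\in Q_i^\circ$ it checks $|\langle z,T_1^ty^1\otimes\cdots\otimes T_l^ty^l\rangle_H|\leq 1$, rewrites this as $|\langle(T_1\otimes\cdots\otimes T_l)z,y^1\otimes\cdots\otimes y^l\rangle_H|\leq 1$, and reads off the conclusion from the definition of $\otimes_\epsilon$. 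Your version packages the same computation as ``apply the already-proven projective case to the transposes, then polarize''; the content is the same, and your presentation has the minor advantage of making the $\otimes_\pi$/$\otimes_\epsilon$ duality explicit. One small remark: at the final step you only need the trivial inclusion $C\subseteq C^{\circ\circ}$, not the full bipolar theorem, so the caveat about $0$ being interior to $L(P_1\otimes_\epsilon\cdots\otimes_\epsilon P_l)$ (which may fail if $L$ is not surjective) is unnecessary.
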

\begin{proof}
To prove the first inclusion, observe that the linearity of $T_1\otimes\cdots\otimes T_l$ and the definition of $\otimes_{\pi}$ imply:
\begin{equation}
\label{eq:uniform of proj and inj product}
T_1\otimes\cdots\otimes T_l(P_{1}\otimes_{\pi}\cdots\otimes_{\pi}P_{l})=\text{conv}\left\{T_1x^1\otimes\cdots\otimes T_lx^l\in\otimes_{i=1}^l\mathbb{F}_i:x^i\in P_i\right\}.
\end{equation}
This equality, together with $T_i\left(P_i\right)\subseteq Q_i$,  yield to the first inclusion.
To prove the second, we use  that $T_{i}\left(P_{i}\right)\subseteq Q_{i}$
implies $T_{i}^{t}\left(Q_{i}^{\circ}\right)\subseteq P_{i}^{\circ}.$  Hence, by the definition of $\otimes_\epsilon,$ for every $z\in P_{1}\otimes_{\epsilon}\cdots\otimes_{\epsilon}P_{l}$
and $y^{i}\in Q_{i}^{\circ}$ we have that
$
\left|\left\langle z,T_{1}^{t}y^{1}\otimes\cdots\otimes T_{l}^{t}y^{l}\right\rangle _{H}\right|\leq1
$
or, equivalently,
$
\left|\left\langle \left(T_{1}\otimes\cdots\otimes T_{l}\right)z,y^{1}\otimes\cdots\otimes y^{l}\right\rangle _{H}\right|\leq1.
$
Thus, $\left(T_{1}\otimes\cdots\otimes T_{l}\right)z\in Q_{1}\otimes_{\epsilon}\cdots\otimes_{\epsilon}Q_{l}$ and the proof is completed.
\end{proof}

The following result sets the injectivity of $\otimes_{\epsilon}$ and the projectivity of $\otimes_{\pi}$ as defined below, in Definition \ref{defn: injective and proj tpb}.
\begin{prop}\label{prop: pi proj eps inj} Let $P_i\subset\mathbb{E}_i,$ $i=1,\ldots,l,$ be convex bodies with $0\in int(P_i)$.
\begin{enumerate}
\item For each subspaces $M_i\subset\mathbb{E}_i$, $$(P_{1}\cap M_{1})\otimes_{\epsilon}\cdots\otimes_{\epsilon}(P_{l}\cap M_{l})=P_{1}\otimes_{\epsilon}\cdots
    \otimes_{\epsilon}P_{l}\cap\otimes_{i=1}^{l}M_{l}.$$
    \item  For any $l$-tuple of surjective linear maps  $T_{i}:\mathbb{E}_{i}\rightarrow\mathbb{F}_{i},$  $$T_{1}\otimes\cdots\otimes T_{l}\left(P_{1}\otimes_{\pi}\cdots
        \otimes_{\pi}P_{l}\right)=T_{1}
        P\otimes_{\pi}\cdots\otimes_{\pi}T_{l}P.$$
\end{enumerate}
\end{prop}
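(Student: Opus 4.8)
The plan is to prove (2) first, since it is elementary, and then to derive (1) from it by polarity. All polars are taken with respect to $\langle\cdot,\cdot\rangle_H$, and I write $\circ_V$ for the polar computed inside an ambient space $V$. For (2), the surjectivity of each $T_i$ guarantees that $T_iP_i$ is a convex body with $0$ in its interior (a surjective linear map between finite-dimensional spaces is open), so that the right-hand side is a bona fide tensor product of convex bodies; this is the only role surjectivity plays. The equality itself follows because a linear map sends a convex hull to the convex hull of the images of the generators: applying $T_1\otimes\cdots\otimes T_l$ to the defining hull of $P_1\otimes_\pi\cdots\otimes_\pi P_l$ yields $\text{conv}\{T_1x^1\otimes\cdots\otimes T_lx^l:x^i\in P_i\}$, and since $y^i$ ranges over $T_iP_i$ exactly when $y^i=T_ix^i$ for some $x^i\in P_i$, this is precisely $T_1P_1\otimes_\pi\cdots\otimes_\pi T_lP_l$.

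For (1), set $N:=\otimes_{i=1}^lM_i$, equipped with the restriction of $\langle\cdot,\cdot\rangle_H$ (which coincides with its intrinsic Hilbert tensor product structure), and let $\pi_{M_i}:\mathbb{E}_i\to M_i$ be the orthogonal projections. The inclusion $\subseteq$ is in any case immediate from the uniform property of $\otimes_\epsilon$ (Proposition \ref{prop: uniform projective injective product}) via $M_i\hookrightarrow\mathbb{E}_i$; the content is the reverse inclusion, which the duality argument delivers simultaneously. It rests on two polarity facts. The first is immediate: for any convex body $K$ with $0\in\text{int}(K)$, one has $(\pi_NK)^{\circ_N}=K^\circ\cap N$, because $\langle\pi_Nx,y\rangle_H=\langle x,y\rangle_H$ whenever $y\in N$. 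The second, $(P_i\cap M_i)^{\circ_{M_i}}=\pi_{M_i}(P_i^\circ)$, says that the polar of a section is the projection of the polar; it follows from the first applied to $P_i^\circ$, invoking the bipolar theorem \cite{Schneider1993} twice. I also use that $\pi_{M_1}\otimes\cdots\otimes\pi_{M_l}=\pi_N$: this tensor product is self-adjoint and idempotent with range $N$ because each factor is, and that characterizes the orthogonal projection onto $N$. Finally, each $P_i^\circ$ is again a convex body with $0$ in the interior, so (2) applies to the surjections $\pi_{M_i}$.

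With these facts the two sides are linked by a chain of identities. Starting from the definition of the injective product inside $N$,
\begin{align*}
(P_1\cap M_1)\otimes_\epsilon\cdots\otimes_\epsilon(P_l\cap M_l)
&=\left((P_1\cap M_1)^{\circ_{M_1}}\otimes_\pi\cdots\otimes_\pi(P_l\cap M_l)^{\circ_{M_l}}\right)^{\circ_N}\\
&=\left(\pi_{M_1}(P_1^\circ)\otimes_\pi\cdots\otimes_\pi\pi_{M_l}(P_l^\circ)\right)^{\circ_N}\\
&=\left(\pi_N\left(P_1^\circ\otimes_\pi\cdots\otimes_\pi P_l^\circ\right)\right)^{\circ_N}\\
&=\left(P_1^\circ\otimes_\pi\cdots\otimes_\pi P_l^\circ\right)^{\circ}\cap N=\left(P_1\otimes_\epsilon\cdots\otimes_\epsilon P_l\right)\cap N.
\end{align*}
Here the second equality is the polar-of-section identity, the third is (2) applied to the $\pi_{M_i}$ together with $\pi_{M_1}\otimes\cdots\otimes\pi_{M_l}=\pi_N$, and the last is the polar-of-projection identity with $K=P_1^\circ\otimes_\pi\cdots\otimes_\pi P_l^\circ$.

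The main obstacle I anticipate is organizational rather than conceptual: one must consistently track the space in which each polar and each projective product is formed, in particular that the left-hand injective product is computed inside $N$ with its inherited Euclidean structure. The single non-formal ingredient is the section--projection duality $(P_i\cap M_i)^{\circ_{M_i}}=\pi_{M_i}(P_i^\circ)$, and, closely tied to it, the verification that a tensor product of orthogonal projections is the orthogonal projection onto the tensor product of the ranges.
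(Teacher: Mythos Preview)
Your proof is correct, but it runs in the opposite direction to the paper's. The paper establishes (1) first, by a direct Hahn--Banach argument (every functional in $(P_i\cap M_i)^{\circ_{M_i}}$ extends to one in $P_i^\circ$, so the defining suprema for $\otimes_\epsilon$ agree on $\otimes_{i=1}^l M_i$), and then deduces (2) by duality. You instead prove (2) directly from the fact that linear maps commute with convex hulls, and derive (1) from (2) via the section--projection polarity. Your route makes (2) completely elementary and pushes all the work into the polarity computation for (1); in fact the chain of equalities you write is exactly the specialization to $\otimes_\alpha=\otimes_\pi$ of the argument the paper gives later, in the proof of Theorem~\ref{thm:duality inj and proj}, that projectivity of $\otimes_\alpha$ implies injectivity of $\otimes_{\alpha'}$ (the relation $(P_i\cap M_i)^{\circ_{M_i}}=\pi_{M_i}(P_i^\circ)$ you isolate is the identity \cite[(1.13)]{Aubrun2017} invoked there). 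So the two approaches use the same underlying duality; the paper's ordering has the advantage that the Hahn--Banach proof of (1) is a one-liner, while yours has the advantage that (2) requires nothing at all and the polarity machinery is made fully explicit.
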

\begin{proof} That $\otimes_\epsilon$ respects sections follows by  applying the Hanh-Banach Theorem. By duality,  the projective tensor product of convex bodies $\otimes_{\pi}$ preserves quotients.
\end{proof}

In the case of $0$-symmetric convex bodies $P_i\subset\mathbb{E}_i,$ $i=1,\ldots,l,$ the products $\otimes_\pi, \otimes_\epsilon$ and the norms $\pi(\cdot), \epsilon(\cdot)$ are related by the following equalities (see \cite[Section 2]{tensorialbodies}):
\begin{equation}
\label{eq: p. inj and p.proj  unit ball inj. norm and proj norm}
P_{1}\otimes_{\pi}\cdots\otimes_{\pi}P_{l}=B_{\otimes_{\pi,i=1}^{l}\left(\mathbb{E}_{i},g_{P_{i}}\left(\cdot\right)\right)}\hspace{0.3cm}\mbox{and}\hspace{0.3cm}P_{1}\otimes_{\epsilon}\cdots\otimes_{\epsilon}P_{l}=B_{\otimes_{\epsilon,i=1}^{l}\left(\mathbb{E}_{i},g_{P_{i}}\left(\cdot\right)\right)}.
\end{equation}

The use of the terminology ``projective tensor product'' and ``injective tensor product'' for the class of  convex bodies with $0$ in the interior is, therefore,   consistent with their use for tensor norms.

\subsection{Tensorial convex bodies: definition and examples.}\label{subs: examples}

\begin{defn}
\label{def:tensorial convex bodies}
A compact convex set $Q\subset\otimes_{i=1}^{l}\mathbb{R}^{d_{i}}$ is called a \textbf{tensorial convex body in} $\mathbf{\otimes_{i=1}^{l}\mathbb{R}^{d_{i}}}$ if there exist convex bodies $Q_{i}\subset\mathbb{R}^{d_{i}},$  $i=1,...,l,$ containing zero in the interior such that
\begin{equation}
\label{eq: tensor body}
Q_{1}\otimes_{\pi}\cdots\otimes_{\pi}Q_{l}\subseteq Q\subseteq Q_{1}\otimes_{\epsilon}\cdots\otimes_{\epsilon}Q_{l}.
\end{equation}
In this case, we  say that $Q$ is a \textbf{tensorial convex body with respect to} $\mathbf{Q_1,\ldots,Q_l}.$
\end{defn}

Notice that since $0\in\text{int}(P_{1}\otimes_{\pi}\cdots\otimes_{\pi}P_{l}),$ $P$ is also a convex body containing zero in the interior. Furthermore, as a consequence of Proposition \ref{prop: duality} its polar set is also a tensorial convex body. The following  are examples of tensorial convex bodies. Details can be found in \cite{tensorialbodies}.

\begin{enumerate}
\item{\sl The injective and the projective tensor product.}
Let $P_{i}\subset\mathbb{E}_{i},$ $i=1,\ldots,l,$  be  convex bodies containing $0$ in the interior.
 Then, trivially,
$
P_{1}\otimes_{\pi}\cdots\otimes_{\pi}P_{l}$ and $
P_{1}\otimes_{\epsilon}\cdots\otimes_{\epsilon}P_{l}$ satisfy (\ref{eq: tensor body}) with respect to $P_1,\ldots P_l$. Consequently, they are tensorial convex bodies.

\item {\sl Hilbertian tensor product of ellipsoids $\otimes_2$.} If $\mathcal{E}_i=T_{i}\left(B_2^{d_i}\right),$ $i=1,\ldots,l$ are ellipsoids in $\mathbb{R}^{d_{i}},$ $i=1,...,l$ respectively, then the \textbf{Hilbertian tensor product of $\mathcal{E}_1,\ldots,\mathcal{E}_l$}, introduced in \cite{Aubrun2006}, is defined as
\begin{equation*}
\mathcal{E}_1\otimes_2\cdots\otimes_2\mathcal{E}_l:=T_1\otimes\cdots\otimes T_l\left(B_{2}^{d_{1},\ldots,d_{l}}\right).
\end{equation*}
It can be directly proved that $\mathcal{E}_1\otimes_2\cdots\otimes_2\mathcal{E}_l$ is the closed unit ball of the Hilbert tensor product $\otimes_{H,i=1}^l\left(\mathbb{R}^{d_{i}},g_{\mathcal{E}_{i}}\right)$.

\item {\sl Unit ball of $\ell_p^{d}$.}
Let $d=d_1\cdots d_l$ and
$1\leq p\leq\infty$. Then, $B_{p}^{d}=B_{p}^{d_{1},...,d_{l}}$ is a tensorial convex body in $\otimes_{i=1}^l\mathbb{R}^{d_i}$.

\end{enumerate}

\subsection{Tensor product of convex bodies with zero in the interior}\label{subsec: Tensor product of convex bodies}
In what follows we will fix  the scalar product on  $\otimes_{i=1}^{l}\mathbb{E}_{i}$ as  the one associated to the Hilbert tensor product
 $\otimes_{H,i=1}^{l}\mathbb{E}_{i}.$ In this way, if $Q\subset\otimes_{i=1}^{l}\mathbb{E}_{i}$ is a nonempty subset,  its  polar set  is $$Q^{\circ}=\left\{ z\in\otimes_{i=1}^{l}\mathbb{E}_{i}:{\sup}_{u\in Q}\left\langle u,z\right\rangle _{H}\leq1\right\}.$$

\subsubsection*{Definition and basic properties of tensor products}

\begin{defn}
\label{defn: tensor prod of cb}
A \textbf{tensor product $\otimes_{\alpha}$ of
order $l$ of convex bodies with zero in the interior} is an assignment of a
convex body $P_{1}\otimes_{\alpha}\cdots
\otimes_{\alpha}P_{l}\subset\otimes_{i=1}^{l}\mathbb{\mathbb{E}}_{i}$,
 to each $l$-tuple  $P_{i}\subset\mathbb{E}_{i},$ $i=1,...,l,$ of convex bodies containing $0$ in the interior,
such that the following conditions are satisfied:
\begin{enumerate}
\item $P_{1}\otimes_{\pi}\cdots\otimes_{\pi}P_{l}\subseteq P_{1}\otimes_{\alpha}\cdots\otimes_{\alpha}P_{l}\subseteq P_{1}\otimes_{\epsilon}\cdots\otimes_{\epsilon}P_{l},$ that is, it is a tensorial convex body with respect to $P_1,\ldots, P_l$.

\item \textit{(Uniform property)} For every linear mapping $T_{i}:\mathbb{E}_{i}\rightarrow\mathbb{F}_{i},$ $i=1,..,l$, if  $Q_i\subset\mathbb{F}_i,$  are convex bodies with $0$ in the interior and $T_i\left(P_i\right)\subseteq Q_i,$ then
\[
T_{1}\otimes\cdots\otimes T_{l}\left(P_{1}\otimes_{\alpha}\cdots\otimes_{\alpha}P_{l}\right)\subseteq Q_{1}\otimes_{\alpha}\cdots\otimes_{\alpha}Q_{l}.
\]
\end{enumerate}
\end{defn}
When no confusion can arise, we will refer to them simply as \textit{tensor products of convex bodies}. Note that $0\in\text{int}(P_{1}\otimes_{\alpha}\cdots\otimes_{\alpha}P_{l})$.
Since condition (1)  holds for $\otimes_\pi, \otimes_\epsilon,$ (see Subsection \ref{sec:inj proj tensor}), Proposition \ref{prop: uniform projective injective product} implies  that  they  are  tensor products of convex bodies.

\begin{prop}
\label{prop:linear invariance of products}
Let $\otimes_{\alpha}$ be a tensor product of convex bodies with zero in the interior and let  $P_{i}\subset\mathbb{E}_{i},$ $i=1,\ldots,l,$ be  convex bodies with $0\in int(P_i)$. Then, for every bijective linear mapping $T_{i}:\mathbb{E}_{i}\rightarrow\mathbb{F}_{i}$
we have:
\[
T_{1}\otimes\cdots\otimes T_{l}\left(P_{1}\otimes_{\alpha}\cdots\otimes_{\alpha}P_{l}\right)=
T_{1}P_1\otimes_{\alpha}\cdots\otimes_{\alpha}T_{l}P_l.
\]
\end{prop}
\begin{proof} Observe that being
 $T_{i}:\mathbb{E}_{i}\rightarrow\mathbb{F}_{i}$
 bijective, we know that $Q_{i}=T_{i}\left(P_{i}\right)$
is a convex body containing $0$ in the interior. By the uniform property of $\otimes_{\alpha},$
we have
$
T_{1}\otimes\cdots\otimes T_{l}\left(P_{1}\otimes_{\alpha}\cdots\otimes_{\alpha}P_{l}\right)\subseteq T_{1}P_1\otimes_{\alpha}\cdots\otimes_{\alpha}T_{l}P_l
$
and
$
T_{1}^{-1}\otimes\cdots\otimes T_{l}^{-1}\left(Q_{1}\otimes_{\alpha}\cdots\otimes_{\alpha}Q_{l}\right)\subseteq T_{1}^{-1}Q_{1}\otimes_{\alpha}\cdots\otimes_{\alpha}T_{l}^{-1}Q_{l}.
$
Since  $T_{i}^{-1}(Q_{i})=P_i$, we have that  $T_{1}\otimes\cdots\otimes T_{l}\left(P_{1}\otimes_{\alpha}\cdots\otimes_{\alpha}P_{l}\right)
=T_{1}P_1\otimes_{\alpha}\cdots\otimes_{\alpha}T_{l}P_l.$
\end{proof}


\subsubsection*{Polarity on tensor products of convex bodies}

The relation of polarity between $\otimes_{\epsilon}$ and $\otimes_{\pi}$ determined by the definition of $\otimes_\epsilon$ can be formulated in the following equivalent ways:
\begin{prop}
\label{prop: duality} Let $P_{i}\subset\mathbb{E}_{i},$ $i=1,\ldots,l$ be   convex bodies with $0\in\text{int}(P_i)$. Then,
\begin{enumerate}
  \item $
(P_{1}\otimes_{\epsilon}\cdots\otimes_{\epsilon}P_{l})^{\circ}=
P_{1}^{\circ}\otimes_{\pi}\cdots\otimes_{\pi}P_{l}^{\circ}.
$

\item $ (P_{1}\otimes_{\pi}\cdots\otimes_{\pi}P_{l})^{\circ}=
P_{1}^{\circ}\otimes_{\epsilon}\cdots\otimes_{\epsilon}P_{l}^{\circ}.
$
\end{enumerate}
\end{prop}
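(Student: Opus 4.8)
The plan is to obtain both identities as immediate consequences of the defining formula~(\ref{eq: inj tensor}) for the injective product together with the bipolar theorem, which asserts that $A^{\circ\circ}=A$ for any convex body $A$ containing $0$ in its interior. The whole argument amounts to unwinding the definition of $\otimes_{\epsilon}$ and applying bipolarity twice; no genuinely new content is needed beyond what (\ref{eq: inj tensor}) already encodes.

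First I would establish item~(2). Applying the definition~(\ref{eq: inj tensor}) to the polar bodies $P_i^{\circ}$ gives
\[
P_{1}^{\circ}\otimes_{\epsilon}\cdots\otimes_{\epsilon}P_{l}^{\circ}
=\left((P_{1}^{\circ})^{\circ}\otimes_{\pi}\cdots\otimes_{\pi}(P_{l}^{\circ})^{\circ}\right)^{\circ}.
\]
Since each $P_i$ is a convex body with $0\in\text{int}(P_i)$, the bipolar theorem yields $(P_i^{\circ})^{\circ}=P_i$, and so the right-hand side equals $(P_{1}\otimes_{\pi}\cdots\otimes_{\pi}P_{l})^{\circ}$, which is exactly the claim in~(2).

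For item~(1) I would instead take the polar of both sides of the defining equation~(\ref{eq: inj tensor}). Writing $A:=P_{1}^{\circ}\otimes_{\pi}\cdots\otimes_{\pi}P_{l}^{\circ}$, the definition reads $P_{1}\otimes_{\epsilon}\cdots\otimes_{\epsilon}P_{l}=A^{\circ}$, whence
\[
(P_{1}\otimes_{\epsilon}\cdots\otimes_{\epsilon}P_{l})^{\circ}=A^{\circ\circ}=A=P_{1}^{\circ}\otimes_{\pi}\cdots\otimes_{\pi}P_{l}^{\circ},
\]
the middle equality being bipolarity once more.

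The only point requiring care---and the closest thing to an obstacle---is verifying the hypotheses under which the bipolar theorem applies, namely that every set to which it is applied is a compact convex body with $0$ in its interior. For the factors $P_i$ this is assumed; for the projective product $A$ it is the property recorded just after~(\ref{eq: proj tensor}); and for each polar it follows from the fact, cited from \cite[Theorem 1.6.1.]{Schneider1993}, that having $0$ in the interior is preserved under polarity. Once these conditions are checked, the two equalities follow with no further computation.
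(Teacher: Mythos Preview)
Your proof is correct and matches the paper's approach: the paper does not give an explicit proof but presents the proposition as an immediate reformulation of the defining equation~(\ref{eq: inj tensor}), which is precisely what you do by unwinding the definition and invoking the bipolar theorem. Your care in checking that the bipolar theorem applies (via the remarks after~(\ref{eq: proj tensor}) and \cite[Theorem 1.6.1.]{Schneider1993}) is appropriate and completes the argument.
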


Thus, according to the following definition, $\otimes_{\epsilon}$ and $\otimes_{\pi}$  are dual tensor products:
\begin{defn}
\label{defn: dual tensor prod of cb} Let $\otimes_{\alpha}$ be a tensor product of convex bodies with zero in the interior. Its  \textbf{ dual tensor product} $\boldsymbol{\otimes_{\alpha^{\prime}}}$  is the tensor product that assigns, to  each $l$-tuple  $P_{i}\subset\mathbb{E}_{i},$ $i=1,...,l,$ of convex bodies with $0\in int(P_i)$,   the
convex body   \begin{equation}
\label{eqn: dual tensor product}P_{1}\otimes_{\alpha^{\prime}}\cdots\otimes_{\alpha^{\prime}}P_{l}:=\left(P_{1}^{\circ}\otimes_{\alpha}\cdots
\otimes_{\alpha}P_{l}^{\circ}\right)^{\circ}.
\end{equation}
\end{defn}
Now we check  that  $\boldsymbol{\otimes_{\alpha^{\prime}}}$  is well defined. Since $0\in P_{1}^{\circ}\otimes_{\alpha}\cdots\otimes_{\alpha}P_{l}^{\circ}$, then $0\in  (P_{1}^{\circ}\otimes_{\alpha}\cdots
\otimes_{\alpha}P_{l}^{\circ})^{\circ}$ (see \cite[Theorem 1.6.1.]{Schneider1993}). Condition  (1) in  Definition \ref{defn: tensor prod of cb} follows using  such  condition  for $P_1^{\circ}\otimes_{\alpha}\cdots\otimes_{\alpha}P_l^{\circ}$. Then, by taking polars the following inclusions are obtained:
\[
\left(P_{1}^{\circ}\otimes_{\epsilon}\cdots\otimes_{\epsilon}P_{l}^{\circ}\right)^{\circ}
\subseteq\left(P_{1}^{\circ}\otimes_{\alpha}\cdots
\otimes_{\alpha}P_{l}^{\circ}\right)^{\circ}
\subseteq\left(P_{1}^{\circ}\otimes_{\pi}\cdots
\otimes_{\pi}P_{l}^{\circ}\right)^{\circ}.
\]
 By Proposition \ref{prop: duality}, this is the same as:
\begin{equation*}
P_{1}\otimes_{\pi}\cdots\otimes_{\pi}P_{l}\subseteq P_{1}\otimes_{\alpha^{\prime}}\cdots\otimes_{\alpha^{\prime}}P_{l}\subseteq P_{1}\otimes_{\epsilon}\cdots\otimes_{\epsilon}P_{l}.
\end{equation*}
To prove condition (2), consider $T_{i}:\mathbb{E}_{i}\rightarrow\mathbb{F}_{i}$
such that $T_{i}\left(P_{i}\right)\subseteq Q_{i}$, $i=1,...,l$.  Using  the uniform property of $\otimes_{\alpha}$, we get:
\[
T_{1}^{t}\otimes\cdots\otimes T_{l}^{t}\left(Q_{1}^{\circ}\otimes_{\alpha}\cdots\otimes_{\alpha}Q_{l}^{\circ}\right)\subseteq P_{1}^{\circ}\otimes_{\alpha}\cdots\otimes_{\alpha}P_{l}^{\circ}.
\]
Thus,
$\left(T_{1}^{t}\otimes\cdots\otimes T_{l}^{t}\right)^{t}\left(P_{1}^{\circ}\otimes_{\alpha}\cdots
\otimes_{\alpha}P_{l}^{\circ}\right)^{\circ}\subset\left(Q_{1}^{\circ}
\otimes_{\alpha}\cdots\otimes_{\alpha}Q_{l}^{\circ}\right)^{\circ}.$ That is,
$$ T_{1}\otimes\cdots\otimes T_{l}\left(P_{1}\otimes_{\alpha^{\prime}}\cdots\otimes_{\alpha^{\prime}}P_{l}\right) \subseteq Q_{1}\otimes_{\alpha^{\prime}}\cdots\otimes_{\alpha^{\prime}}Q_{l}.
$$
\qed

The following proposition collects the basic properties of $\otimes_{\alpha^{\prime}}.$ We do not include its proof, since it is a direct consequence of the definition of $\otimes_{\alpha^{\prime}}$ and Proposition \ref{prop: duality}.

\begin{prop}
\label{prop:property dual product}
The following hold:
\begin{enumerate}
\item The dual of $\otimes_{\pi}$ (resp. $\otimes_{\epsilon}$) is $\otimes_{\epsilon}$ (resp. $\otimes_{\pi}$).
\item If $\otimes_{\alpha}$ is a tensor product of convex bodies, then $\otimes_{\alpha^{\prime\prime}}=\otimes_{\alpha}.$
\item  If $\otimes_{\alpha},\otimes_{\beta}$ are tensor products of convex bodies such that $P_1\otimes_{\alpha}\cdots\otimes_{\alpha}P_l\subseteq P_1\otimes_{\beta}\cdots\otimes_{\beta}P_l$ for all convex bodies $P_i\subset\mathbb{E}_i,$ $0\in\text{int}(P_i),$ then $P_1\otimes_{\beta^{\prime}}\cdots\otimes_{\beta^{\prime}}P_l\subseteq P_1\otimes_{\alpha^{\prime}}\cdots\otimes_{\alpha^{\prime}}P_l.$
\end{enumerate}
\end{prop}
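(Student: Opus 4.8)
The plan is to derive all three statements from two elementary properties of polarity for convex bodies that contain the origin in their interior: the bipolar identity $P^{\circ\circ}=P$, which holds because such a $P$ is closed, convex and contains $0$ (see \cite[Theorem 1.6.1]{Schneider1993}), and the inclusion-reversing property $A\subseteq B\Rightarrow B^{\circ}\subseteq A^{\circ}$. Each item then reduces to unwinding Definition \ref{defn: dual tensor prod of cb} and substituting Proposition \ref{prop: duality}, applied to the $l$-tuple of polars $P_{i}^{\circ}$ (which are again convex bodies with $0$ in the interior, so the substitution is legitimate).

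For part (1), I would unwind the definition of the dual of $\otimes_{\pi}$ and apply Proposition \ref{prop: duality} to the polars. Explicitly, $P_{1}\otimes_{\pi^{\prime}}\cdots\otimes_{\pi^{\prime}}P_{l}=(P_{1}^{\circ}\otimes_{\pi}\cdots\otimes_{\pi}P_{l}^{\circ})^{\circ}$, and by part (2) of Proposition \ref{prop: duality} this equals $P_{1}^{\circ\circ}\otimes_{\epsilon}\cdots\otimes_{\epsilon}P_{l}^{\circ\circ}$, which is $P_{1}\otimes_{\epsilon}\cdots\otimes_{\epsilon}P_{l}$ by the bipolar identity. The symmetric computation, now invoking part (1) of Proposition \ref{prop: duality}, shows that the dual of $\otimes_{\epsilon}$ is $\otimes_{\pi}$.

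For part (2), I would apply the definition of the dual twice. Writing $P_{1}\otimes_{\alpha^{\prime\prime}}\cdots\otimes_{\alpha^{\prime\prime}}P_{l}=(P_{1}^{\circ}\otimes_{\alpha^{\prime}}\cdots\otimes_{\alpha^{\prime}}P_{l}^{\circ})^{\circ}$ and expanding the inner dual as $(P_{1}^{\circ\circ}\otimes_{\alpha}\cdots\otimes_{\alpha}P_{l}^{\circ\circ})^{\circ}=(P_{1}\otimes_{\alpha}\cdots\otimes_{\alpha}P_{l})^{\circ}$ via the bipolar identity, one final application of the bipolar identity to the remaining outer polar returns $P_{1}\otimes_{\alpha}\cdots\otimes_{\alpha}P_{l}$.

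For part (3), I would feed the $l$-tuple of polars $P_{i}^{\circ}$ into the hypothesis to obtain $P_{1}^{\circ}\otimes_{\alpha}\cdots\otimes_{\alpha}P_{l}^{\circ}\subseteq P_{1}^{\circ}\otimes_{\beta}\cdots\otimes_{\beta}P_{l}^{\circ}$, then take polars; this reverses the inclusion and, by Definition \ref{defn: dual tensor prod of cb}, delivers $P_{1}\otimes_{\beta^{\prime}}\cdots\otimes_{\beta^{\prime}}P_{l}\subseteq P_{1}\otimes_{\alpha^{\prime}}\cdots\otimes_{\alpha^{\prime}}P_{l}$. There is no genuine obstacle here: every step is a direct substitution, and the only point meriting a moment of care is confirming that passing from $P_{i}$ to $P_{i}^{\circ}$ stays within the class of convex bodies containing $0$ in the interior, which is exactly what guarantees the bipolar identity can be used in both directions.
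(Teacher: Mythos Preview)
Your argument is correct and is exactly the direct verification the paper has in mind: the authors omit the proof, remarking that it ``is a direct consequence of the definition of $\otimes_{\alpha^{\prime}}$ and Proposition \ref{prop: duality},'' and what you wrote is precisely that unwinding via the bipolar identity and the order-reversing property of polars. There is nothing to add.
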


\subsubsection*{Sections and quotients: injective and projective  properties}

\begin{defn}
\label{defn: injective and proj tpb} We say that
a  tensor product  of
convex bodies with zero in the interior $\otimes_{\alpha}$ is
                 \textbf{\textsl{injective}} if for each $P_{i}\subset\mathbb{E}_{i}$ with $0\in\text{int}(P_i)$
and each subspace  $M_{i}\subseteq\mathbb{E}_{i},$ $i=1,...,l,$ it holds that
\begin{equation}\label{eq: def injectivity}
\left(P_{1}\cap M_{1}\right)\otimes_{\alpha}\cdots\otimes_{\alpha}\left(P_{l}\cap M_{l}\right)=\left(P_{1}\otimes_{\alpha}\cdots\otimes_{\alpha}P_{l}\right)\cap\otimes_{i=1}^{l}M_{i}.
\end{equation}
Here, the scalar product on the space $M_{i}$ is the one induced
by $\mathbb{E}_{i}.$ In this way each $P_{i}\cap M_{i}$
is a convex body with $0$ in the interior.

We say that $\otimes_{\alpha}$  is
\textbf{\textsl{projective}} if for each $P_{i}\subset\mathbb{E}_{i},$ $0\in\text{int}(P_i),$
and every surjective linear mapping $T_{i}:\mathbb{E}_{i}\rightarrow\mathbb{F}_{i},$
$i=1,...,l,$  it holds that
\begin{equation}\label{eq: def projectivity}
T_{1}\otimes\cdots\otimes T_{l}\left(P_{1}\otimes_{\alpha}\cdots\otimes_{\alpha}P_{l}\right)
=T_{1}P_1\otimes_{\alpha}\cdots\otimes_{\alpha}T_{l}P_l.
\end{equation}
\end{defn}

Injectivity and projectivity are dual properties, in the following sense:

\begin{thm}
\label{thm:duality inj and proj}
Let $\otimes_{\alpha}$ be a tensor product of convex
bodies, then $\otimes_{\alpha}$ is projective if and only if $\otimes_{\alpha^{\prime}}$
is injective.
\end{thm}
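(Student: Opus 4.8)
The plan is to exploit the duality defining $\otimes_{\alpha^{\prime}}$ together with the classical fact that, under polarity, taking a section of a convex body by a subspace is dual to orthogonally projecting its polar onto that subspace. Concretely, for a convex body $P\subset\mathbb{E}$ with $0\in\text{int}(P)$ and a subspace $M\subseteq\mathbb{E}$, writing $\pi_M:\mathbb{E}\to M$ for the orthogonal projection (so that the inclusion $M\hookrightarrow\mathbb{E}$ is $\pi_M^{t}$), one has the two identities, with the polar on the left taken inside $M$,
\[
(P\cap M)^{\circ}=\pi_M(P^{\circ})\qquad\text{and}\qquad(\pi_M P)^{\circ}=P^{\circ}\cap M .
\]
Both follow from the polarity rule $(TK)^{\circ}=(T^{t})^{-1}(K^{\circ})$ applied to $T=\pi_M$, together with the bipolar theorem. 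They transfer to the tensor setting because, for the Hilbert tensor product scalar product, $\otimes_{i=1}^{l}\pi_{M_i}$ is exactly the orthogonal projection of $\otimes_{i=1}^{l}\mathbb{E}_i$ onto $\otimes_{i=1}^{l}M_i$: expanding a decomposable vector $x^{1}\otimes\cdots\otimes x^{l}$ after splitting each $x^{i}\in M_i\oplus M_i^{\perp}$, every summand other than the purely-$M_i$ one has a factor in some $M_i^{\perp}$ and hence lies in an orthogonal complement of $\otimes_{i=1}^{l}M_i$. I will also use that a general surjection $T_i:\mathbb{E}_i\to\mathbb{F}_i$ factors as $T_i=S_i\circ\pi_{M_i}$ with $M_i=(\ker T_i)^{\perp}$, $\pi_{M_i}$ the orthogonal projection and $S_i:M_i\to\mathbb{F}_i$ a linear isomorphism; by the linear invariance of Proposition \ref{prop:linear invariance of products}, projectivity therefore only has to be checked against the orthogonal projections $\pi_{M_i}$.

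For the implication that $\otimes_{\alpha}$ projective forces $\otimes_{\alpha^{\prime}}$ injective, I would fix subspaces $M_i\subseteq\mathbb{E}_i$ and bodies $P_i$ and just run the definitions. Expanding $\otimes_{\alpha^{\prime}}$ on the sections via Definition \ref{defn: dual tensor prod of cb} and using $(P_i\cap M_i)^{\circ}=\pi_{M_i}(P_i^{\circ})$ rewrites $\left(P_1\cap M_1\right)\otimes_{\alpha^{\prime}}\cdots\otimes_{\alpha^{\prime}}\left(P_l\cap M_l\right)$ as $\left(\pi_{M_1}(P_1^{\circ})\otimes_{\alpha}\cdots\otimes_{\alpha}\pi_{M_l}(P_l^{\circ})\right)^{\circ}$, the inner polar taken in $\otimes_{i=1}^{l}M_i$. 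Projectivity of $\otimes_{\alpha}$ for the surjections $\pi_{M_i}$ turns the inner body into $\left(\otimes_{i=1}^{l}\pi_{M_i}\right)\!\left(P_1^{\circ}\otimes_{\alpha}\cdots\otimes_{\alpha}P_l^{\circ}\right)$, and the projection–section identity applied to the subspace $\otimes_{i=1}^{l}M_i$ turns its polar into $\left(P_1^{\circ}\otimes_{\alpha}\cdots\otimes_{\alpha}P_l^{\circ}\right)^{\circ}\cap\otimes_{i=1}^{l}M_i$. By Definition \ref{defn: dual tensor prod of cb} this last body is exactly $\left(P_1\otimes_{\alpha^{\prime}}\cdots\otimes_{\alpha^{\prime}}P_l\right)\cap\otimes_{i=1}^{l}M_i$, which is the injectivity identity \eqref{eq: def injectivity}.

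For the converse I would prove the mirror statement that $\otimes_{\beta}$ injective forces $\otimes_{\beta^{\prime}}$ projective, and then apply it with $\beta=\alpha^{\prime}$, using $\otimes_{\alpha^{\prime\prime}}=\otimes_{\alpha}$ from Proposition \ref{prop:property dual product}. By the reduction above it suffices to treat the orthogonal projections $\pi_{M_i}$; expanding $\pi_{M_1}P_1\otimes_{\beta^{\prime}}\cdots\otimes_{\beta^{\prime}}\pi_{M_l}P_l$ by the dual definition and using $(\pi_{M_i}P_i)^{\circ}=P_i^{\circ}\cap M_i$ produces $\left(\left(P_1^{\circ}\cap M_1\right)\otimes_{\beta}\cdots\otimes_{\beta}\left(P_l^{\circ}\cap M_l\right)\right)^{\circ}$. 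Injectivity of $\otimes_{\beta}$ collapses the inner body to $\left(P_1^{\circ}\otimes_{\beta}\cdots\otimes_{\beta}P_l^{\circ}\right)\cap\otimes_{i=1}^{l}M_i$, and the section–projection identity for $\otimes_{i=1}^{l}M_i$ turns its polar into $\left(\otimes_{i=1}^{l}\pi_{M_i}\right)\!\left(P_1\otimes_{\beta^{\prime}}\cdots\otimes_{\beta^{\prime}}P_l\right)$, which is the projectivity identity \eqref{eq: def projectivity}.

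The routine but delicate part, and the one I would write out most carefully, is the bookkeeping of polarity: every polar must be taken in the correct ambient space (inside $M_i$, inside $\otimes_{i=1}^{l}M_i$, or inside the full space), and the argument rests on the identity $\otimes_{i=1}^{l}\pi_{M_i}=\pi_{\otimes_{i=1}^{l}M_i}$, which is exactly where the Hilbert tensor product structure is used. The one genuinely conceptual step — factoring an arbitrary surjection as an orthogonal projection followed by an isomorphism so that Proposition \ref{prop:linear invariance of products} applies — is what lets the two definitions (sections versus images of surjections) meet, and I expect getting that reduction right, rather than the algebra, to be the main obstacle.
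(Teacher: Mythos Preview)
Your argument is correct and, for the forward implication (projective $\Rightarrow$ dual injective), it coincides with the paper's: both apply the orthogonal projections onto the $M_i$ (the paper writes $L_j$) together with the section/projection polarity identity, which the paper cites as \cite[(1.13)]{Aubrun2017}.

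For the converse you organize things slightly differently. You first reduce an arbitrary surjection $T_i$ to an orthogonal projection via the factorization $T_i=S_i\circ\pi_{M_i}$ with $S_i$ an isomorphism and Proposition~\ref{prop:linear invariance of products}, and then rerun the same section/projection duality as in the forward direction (applied now with $\beta=\alpha'$ and the double-dual identity $\alpha''=\alpha$). The paper instead handles general surjections directly, using the identity $T^{t}\bigl((TP)^{\circ}\bigr)=P^{\circ}\cap T^{t}(\mathbb{F})$ and the fact that each $T_i^{t}$ is injective, hence a bijection onto its image to which Proposition~\ref{prop:linear invariance of products} applies. Your reduction makes the two halves of the proof visibly symmetric and keeps all polars living in the same subspaces $M_i$ and $\otimes_i M_i$; the paper's route skips the factorization step at the cost of tracking the image subspaces $T_i^{t}(\mathbb{F}_i)$. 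The underlying identities are the same (your $(\pi_M P)^{\circ}=P^{\circ}\cap M$ is the paper's identity specialized to $T=\pi_M$), so this is a difference of packaging rather than of substance.
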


\begin{proof} Suppose that $\otimes_{\alpha}$ is projective. For each $j$, let $M_j\subset \mathbb{E}_{j}$
be a subspace and  let  $L_j$ be the orthogonal projection onto $M_j.$ For each $P_{j}\subset\mathbb{E}_{j},$ $0\in\text{int}(P_j),$
$\left(P_{j}\cap M_{j}\right)^{\diamond}$ denotes the polar set of $P_{j}\cap M_{j}\subseteq M_{j}$ determined by the scalar product on $M_j$ induced by $\mathbb{E}_j.$

Using  the relation \cite[(1.13)]{Aubrun2017}  several times and the projectivity of $\otimes_{\alpha}$, we have
\begin{multline*}
  \left(P_{1}\cap M_{1}\right)\otimes_{\alpha^{\prime}}\cdots\otimes_{\alpha^{\prime}}
  \left(P_{l}\cap M_{l}\right) =\left(\left(P_{1}\cap M_{1}\right)^{\diamond}\otimes_{\alpha}\cdots\otimes_{\alpha}\left(P_{l}\cap M_{l}\right)^{\diamond}\right)^{\diamond}=
\\
  (L_1(P_{1}^{\circ})\otimes_{\alpha}\cdots\otimes_{\alpha}
  L_l(P_{1}^{\circ}))^{\diamond}= (L_1\otimes\cdots\otimes L_l\left(P_{1}^{\circ}
\otimes_{\alpha}\cdots\otimes_{\alpha}P_{l}^{\circ}\right))^{\diamond}
=\\ (\left(P_{1}^{\circ}\otimes_{\alpha}\cdots
  \otimes_{\alpha}P_{l}^{\circ}\right)^{\circ}
  \cap\otimes_{j=1}^{l}M_{j})^{\diamond\diamond}=P_{1}
  \otimes_{\alpha^{\prime}}\cdots\otimes_{\alpha^{\prime}}
  P_{l}\cap\otimes_{j=1}^{l}M_{j}.
\end{multline*}
Consequently,  $\otimes_{\alpha^{\prime}}$ is injective.

To prove the reciprocal, we will use the following relation: if $T:\mathbb{E}\rightarrow \mathbb{F}$ is a linear map and $P\subset \mathbb{E}$ is a convex set then,
\begin{equation}
\label{eq:transpose of polar TA}
T^{t}\left(\left(TP\right)^{\circ}\right)=P^{\circ}\cap T^{t}\left(\mathbb{F}\right).
\end{equation}

Let $T_{j}:\mathbb{E}_{j}\rightarrow\mathbb{F}_{j}$,  $j=1,...,l,$ be  surjective mappings. In such case, $T_1\otimes\cdots\otimes T_l$ is also surjective. The mappings $T_{j}^t$ and $(T_1\otimes\cdots\otimes T_l)^t=T_1^t\otimes\cdots\otimes T_l^t$ are injective.

Let us assume that  $\otimes_{\alpha^{\prime}}$ is injective. Using the injectivity of $\otimes_{\alpha^{\prime}}$ and applying (\ref{eq:transpose of polar TA}), we have:
\begin{multline*}
T_{1}^{t}\otimes\cdots\otimes T_{l}^{t}\left(\left(T_{1}\otimes\cdots\otimes T_{l}\left(P_{1}\otimes_{\alpha}\cdots\otimes_{\alpha}P_{l}\right)\right)^{\circ}\right)=(P_1\otimes_\alpha\cdots\otimes_\alpha P_l)^\circ\cap\otimes_{i=1}^lT^t(\mathbb{F}_i)\\=P_1^\circ\otimes_{\alpha^{\prime}}\cdots\otimes_{\alpha^{\prime}}P_l^\circ\cap\otimes_{i=1}^lT^t(\mathbb{F}_i)=P_{1}^{\circ}\cap T_{1}^{t}\left(\mathbb{F}_{1}\right)\otimes_{\alpha^{\prime}}\cdots\otimes_{\alpha^{\prime}}P_{l}^{\circ}\cap T_{l}^{t}\left(\mathbb{F}_{l}\right)\\=T_{1}^{t}\left(\left(T_{1}P_{l}\right)^{\circ}\right)\otimes_{\alpha^{\prime}}\cdots\otimes_{\alpha^{\prime}}T_{l}^{t}\left(\left(T_{l}P_{l}\right)^{\circ}\right)\overset{*}{=}T_{1}^{t}\otimes\cdots\otimes T_{l}^{t}\left(\left(T_{1}P_{1}\right)^{\circ}\otimes_{\alpha^{\prime}}\cdots\otimes_{\alpha^{\prime}}\left(T_{1}P_{l}\right)^{\circ}\right).
\end{multline*}
(*) is due to the injectivity of $T_{1}^{t}\otimes\cdots\otimes T_{l}^{t}$ and Proposition \ref{prop:linear invariance of products}.
Indeed, we must have:
\[
(\left(T_{1}\otimes\cdots\otimes T_{l}\left(P_{1}\otimes_{\alpha}\cdots\otimes_{\alpha}P_{l}\right)\right)^{\circ}=\left(T_{1}P_{1}\right)^{\circ}\otimes_{\alpha^{\prime}}\cdots\otimes_{\alpha^{\prime}}\left(T_{l}P_{l}\right)^{\circ}.
\]
Finally, the result follows directly by taking polars in the previous equality and using (\ref{eqn: dual tensor product}).
\end{proof}

\begin{rem}\label{rmk: proj is not inj}
The projective tensor product of convex bodies $\otimes_\pi$, which is projective (see Proposition \ref{prop: pi proj eps inj})  is not injective.   To see this, recall that there exist finite dimensional Banach spaces  $M\subset E$, $N$ and an element in $\mathcal{L}(M;N^{*})$ such that each of its extensions in $\mathcal{L}(E;N^{*})$ has    norm strictly greater \cite[Theorem 4]{Kakutani1939}.   This means, by the Hahn-Banach theorem and by the fundamental relation  $\mathcal{L}(E;N^{*})=(E\otimes_\pi N)^{*}$ \cite[pp. 27]{defantfloret}, that   $(B_E\cap M)\otimes_\pi B_{N}\neq(B_E \otimes_\pi B_N)\cap M\otimes N,$ where $B_E,$ $B_M$ are the closed unit balls of $E,N.$

By duality, we have that  the injective tensor product of convex bodies is not projective.
\end{rem}

\subsubsection*{The closest injective and projective tensor products}\label{subs: inj proj hulls}

The following relation on tensor products defines an order: let  $\otimes_\alpha$ and $\otimes_\beta$ be tensor products of convex bodies. We say that $\otimes_\alpha\subseteq\otimes_\beta$ if and only if for every tuple of convex bodies $P_i\subset\mathbb{E}_i,$ $i=1,\ldots,l,$ containing $0$ in their interior,  $P_{1}\otimes_{\alpha}\cdots\otimes_{\alpha}P_{l}\subseteq P_{1}\otimes_{\beta}\cdots\otimes_{\beta}P_{l}.$ We say that  $\otimes_\alpha=\otimes_\beta$ if and only if $\otimes_\alpha\subseteq\otimes_\beta$ and $\otimes_\beta\subseteq\otimes_\alpha$ hold. In this order, given any
 tensor product  of convex bodies $\otimes_{\alpha}$, it holds that  $\otimes_{\pi}\subseteq\otimes_{\alpha}\subseteq\otimes_{\epsilon}$.


 Given a tensor product of convex bodies
 $\otimes_{\alpha}$, let $\otimes_{\alpha^{inj}}$   be defined on  each tuple of convex bodies $P_i\subset\mathbb{E}_i,$ $0\in\text{int}(P_i),$  $i=1,\ldots,l,$ as:
\begin{equation}\label{eq: injective hull}
P_1\otimes_{\alpha^{inj}}\cdots\otimes_{\alpha^{inj}}P_l:=\cap\left\{P_1\otimes_{\beta}\cdots\otimes_{\beta}P_l:\otimes_{\beta}\text{ is injective and }\otimes_{\alpha}\subseteq\otimes_{\beta}\right\}.
\end{equation}

\begin{prop}
\label{prop:closest injective product}
Let $\otimes_{\alpha}$ be a tensor product of convex bodies, then $\otimes_{\alpha^{inj}}$ is the smallest injective tensor product of convex bodies such that $\otimes_{\alpha}\subseteq\otimes_{\alpha^{inj}}$.
\end{prop}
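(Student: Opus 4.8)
The plan is to verify directly that $\otimes_{\alpha^{inj}}$ satisfies each of the four requirements --- that it is a tensor product of convex bodies, that it is injective, that it dominates $\otimes_{\alpha}$, and that it is the least such --- exploiting throughout that the family indexing the intersection in (\ref{eq: injective hull}) is a fixed family of \emph{operations} and so does not depend on the particular bodies fed into them.

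First I would settle well-definedness and condition (1) of Definition \ref{defn: tensor prod of cb}. The family is non-empty, since $\otimes_{\epsilon}$ is injective (Proposition \ref{prop: pi proj eps inj}) and $\otimes_{\alpha}\subseteq\otimes_{\epsilon}$; hence the intersection is non-trivial and contained in $P_{1}\otimes_{\epsilon}\cdots\otimes_{\epsilon}P_{l}$. Each member $\otimes_{\beta}$ of the family is itself a tensor product, so $P_{1}\otimes_{\pi}\cdots\otimes_{\pi}P_{l}\subseteq P_{1}\otimes_{\beta}\cdots\otimes_{\beta}P_{l}$; intersecting over $\beta$ yields the sandwich required by condition (1). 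Being an intersection of compact convex sets sandwiched between two bodies having $0$ in their interiors, $P_{1}\otimes_{\alpha^{inj}}\cdots\otimes_{\alpha^{inj}}P_{l}$ is again a convex body with $0$ in its interior.

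For the uniform property (condition (2)), I would take linear maps $T_{i}\colon\mathbb{E}_{i}\to\mathbb{F}_{i}$ with $T_{i}(P_{i})\subseteq Q_{i}$ and combine the elementary inclusion ``image of an intersection $\subseteq$ intersection of the images'' with the uniform property held by every $\otimes_{\beta}$:
\[
T_{1}\otimes\cdots\otimes T_{l}\bigl(P_{1}\otimes_{\alpha^{inj}}\cdots\otimes_{\alpha^{inj}}P_{l}\bigr)\subseteq\bigcap_{\beta}T_{1}\otimes\cdots\otimes T_{l}\bigl(P_{1}\otimes_{\beta}\cdots\otimes_{\beta}P_{l}\bigr)\subseteq\bigcap_{\beta}Q_{1}\otimes_{\beta}\cdots\otimes_{\beta}Q_{l},
\]
and the last set is exactly $Q_{1}\otimes_{\alpha^{inj}}\cdots\otimes_{\alpha^{inj}}Q_{l}$ because the indexing family is common to all tuples. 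This establishes that $\otimes_{\alpha^{inj}}$ is a tensor product of convex bodies. Injectivity is then the cleanest part: for each (injective) $\otimes_{\beta}$ and subspaces $M_{i}\subseteq\mathbb{E}_{i}$ one has $(P_{1}\cap M_{1})\otimes_{\beta}\cdots\otimes_{\beta}(P_{l}\cap M_{l})=(P_{1}\otimes_{\beta}\cdots\otimes_{\beta}P_{l})\cap\otimes_{i=1}^{l}M_{i}$; intersecting over $\beta$ and pulling the common factor $\otimes_{i=1}^{l}M_{i}$ outside the intersection (the set identity $\bigcap_{\beta}(A_{\beta}\cap N)=(\bigcap_{\beta}A_{\beta})\cap N$) gives precisely (\ref{eq: def injectivity}) for $\otimes_{\alpha^{inj}}$.

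It remains to compare with $\otimes_{\alpha}$ and prove minimality. Since $\otimes_{\alpha}\subseteq\otimes_{\beta}$ for every member, $P_{1}\otimes_{\alpha}\cdots\otimes_{\alpha}P_{l}$ lies in every intersectand and hence in the intersection, giving $\otimes_{\alpha}\subseteq\otimes_{\alpha^{inj}}$; and if $\otimes_{\gamma}$ is any injective tensor product with $\otimes_{\alpha}\subseteq\otimes_{\gamma}$, then $\otimes_{\gamma}$ is one of the members of the defining family, whence $\otimes_{\alpha^{inj}}\subseteq\otimes_{\gamma}$. The only delicate point in the whole argument is the uniform-property step, where applying $T_{1}\otimes\cdots\otimes T_{l}$ to an intersection yields only an inclusion into the intersection of the images rather than an equality; this is harmless, since Definition \ref{defn: tensor prod of cb}(2) demands only an inclusion, but it is the reason one should not hope for equalities there. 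Every other step reduces to the robustness of the injectivity \emph{equation} and of the sandwich condition under arbitrary intersection of the family.
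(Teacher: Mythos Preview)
Your proof is correct and follows essentially the same approach as the paper's own argument: both verify well-definedness via the nonemptiness of the family (through $\otimes_\epsilon$), derive the sandwich condition and the uniform property from those of each $\otimes_\beta$, check injectivity by pulling the common subspace factor through the intersection, and conclude minimality from the fact that any competing injective $\otimes_\gamma$ already sits in the defining family. Your write-up is in fact more explicit than the paper's at the uniform-property step (where the paper simply asserts it holds ``because each $\otimes_\beta$ satisfies it''), and your emphasis that the indexing family is independent of the input tuple is precisely the point that makes that step go through.
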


\begin{proof} Observe first that $\otimes_{\alpha^{inj}}$ is well defined, since $\otimes_{\alpha}\subseteq\otimes_{\epsilon}$ and   $\otimes_\epsilon$ is injective. Let   $P_i\subset\mathbb{E}_i,$ $0\in\text{int}(P_i),$ $i=1,\ldots,l,$  be any $l$-tuple of convex bodies. Then,  $P_1\otimes_{\alpha^{inj}}\cdots\otimes_{\alpha^{inj}}P_l$ is a compact convex set such that
\begin{equation*}
 P_{1}\otimes_{\pi}\cdots\otimes_{\pi}P_{l}\subseteq P_{1}\otimes_{\alpha^{inj}}\cdots\otimes_{\alpha^{inj}}P_{l}\subseteq P_{1}\otimes_{\epsilon}\cdots\otimes_{\epsilon}P_{l}.
\end{equation*}
Since $0\in int (P_{1}\otimes_{\pi}\cdots\otimes_{\pi}P_{l})$, the same holds for $P_{1}\otimes_{\alpha^{inj}}\cdots\otimes_{\alpha^{inj}}P_{l}.$ The uniform property (condition $(2)$ in Definition \ref{defn: tensor prod of cb}) is satisfied because each $\otimes_{\beta}$ satisfies it.

To see that $\otimes_{\alpha^{inj}}$ is injective, consider any subspaces $M_{i}\subseteq\mathbb{E}_{i},$ $i=1,...,l,$. Then, using that  each $\otimes_\beta$ is injective, we have:
\begin{multline*}
P_{1}\cap M_{1}\otimes_{\alpha^{inj}}\cdots\otimes_{\alpha^{inj}}P_{l}\cap M_{l} = \cap\left\{P_{1}\cap M_{1}\otimes_{\beta}\cdots\otimes_{\beta}P_{l}\cap M_{l}:\otimes_{\alpha}\subseteq\otimes_{\beta}\right\}= \\  \cap\left\{P_{1}\otimes_{\beta}\cdots\otimes_{\beta}P_{l}\cap\otimes_{i=1}^l M_i:\otimes_{\alpha}\subseteq\otimes_{\beta}\right\}= P_{1}\otimes_{\alpha^{inj}}\cdots\otimes_{\alpha^{inj}}P_{l}\cap\otimes_{i=1}^l M_i.
\end{multline*}
Finally, being $\otimes_{\alpha^{inj}}$ an injective tensor product, it has to be the smallest one bigger than $\otimes_{\alpha}.$
\end{proof}

%
Given a tensor product of convex bodies $\otimes_\alpha,$
let $\otimes_{\alpha^{proj}}$ be defined on  each  tuple of convex bodies $P_i\subset\mathbb{E}_i,$ $0\in\text{int}(P_i),$  $i=1,\ldots,l,$ $\otimes_{\alpha^{proj}}$ as:
\begin{equation}\label{eq: projective hull}
P_1\otimes_{\alpha^{proj}}\cdots\otimes_{\alpha^{proj}}P_l:=P_1\otimes_{((\alpha^{\prime})^{inj})^{\prime}}\cdots\otimes_{((\alpha^{\prime})^{inj})^{\prime}}P_l.
\end{equation}

\begin{prop}
\label{prop:projective associate}
For a given  $\otimes_{\alpha}$, $\otimes_{\alpha^{proj}}$ is the biggest projective tensor product of convex bodies such that $\otimes_{\alpha^{proj}}\subseteq\otimes_{\alpha}$. Furthermore,
\[
P_1\otimes_{\alpha^{proj}}\cdots\otimes_{\alpha^{proj}}P_l=\overline{\text{conv}}(\cup\left\{P_1\otimes_{\beta}\cdots\otimes_{\beta}P_l:\otimes_{\beta}\text{ is projective, }\otimes_{\beta}\subseteq\otimes_{\alpha}\right\}).
\]
\end{prop}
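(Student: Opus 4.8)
The plan is to reduce everything to the already-established injective case (Proposition \ref{prop:closest injective product}) by systematically dualizing, using the three structural facts at our disposal: duality is an involution ($\otimes_{\alpha^{\prime\prime}}=\otimes_\alpha$, Proposition \ref{prop:property dual product}(2)); it reverses the order ($\otimes_\alpha\subseteq\otimes_\beta$ implies $\otimes_{\beta^{\prime}}\subseteq\otimes_{\alpha^{\prime}}$, Proposition \ref{prop:property dual product}(3)); and it interchanges projectivity and injectivity ($\otimes_\gamma$ is projective iff $\otimes_{\gamma^{\prime}}$ is injective, Theorem \ref{thm:duality inj and proj}). The definition $\otimes_{\alpha^{proj}}=\otimes_{((\alpha^{\prime})^{inj})^{\prime}}$ is engineered precisely so that its dual is the injective hull of $\otimes_{\alpha^{\prime}}$: since duality is involutive, $(\otimes_{\alpha^{proj}})^{\prime}=\otimes_{(\alpha^{\prime})^{inj}}$, which is injective by Proposition \ref{prop:closest injective product}. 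Hence $\otimes_{\alpha^{proj}}$ is projective by Theorem \ref{thm:duality inj and proj}.

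Next I would verify the two comparison properties. To see $\otimes_{\alpha^{proj}}\subseteq\otimes_\alpha$, start from $\otimes_{\alpha^{\prime}}\subseteq\otimes_{(\alpha^{\prime})^{inj}}$ (Proposition \ref{prop:closest injective product}) and apply order-reversal to get $\otimes_{((\alpha^{\prime})^{inj})^{\prime}}\subseteq\otimes_{\alpha^{\prime\prime}}=\otimes_\alpha$, which is exactly $\otimes_{\alpha^{proj}}\subseteq\otimes_\alpha$. For maximality, let $\otimes_\gamma$ be any projective tensor product with $\otimes_\gamma\subseteq\otimes_\alpha$. Then $\otimes_{\gamma^{\prime}}$ is injective (Theorem \ref{thm:duality inj and proj}) and, by order-reversal, $\otimes_{\alpha^{\prime}}\subseteq\otimes_{\gamma^{\prime}}$; thus $\otimes_{\gamma^{\prime}}$ is an injective tensor product dominating $\otimes_{\alpha^{\prime}}$. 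Minimality of the injective hull (Proposition \ref{prop:closest injective product}) gives $\otimes_{(\alpha^{\prime})^{inj}}\subseteq\otimes_{\gamma^{\prime}}$, and dualizing once more yields $\otimes_\gamma=\otimes_{\gamma^{\prime\prime}}\subseteq\otimes_{((\alpha^{\prime})^{inj})^{\prime}}=\otimes_{\alpha^{proj}}$. This shows $\otimes_{\alpha^{proj}}$ is the largest projective tensor product contained in $\otimes_\alpha$.

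Finally I would deduce the explicit formula from the maximality just proved, working with a fixed tuple $P_1,\dots,P_l$. Every $\otimes_\beta$ occurring in the union is projective with $\otimes_\beta\subseteq\otimes_\alpha$, so by maximality $P_1\otimes_{\beta}\cdots\otimes_{\beta}P_l\subseteq P_1\otimes_{\alpha^{proj}}\cdots\otimes_{\alpha^{proj}}P_l$; since the right-hand side is closed and convex, it also contains the closed convex hull of the union. Conversely, $\otimes_{\alpha^{proj}}$ itself is projective and contained in $\otimes_\alpha$, so it is one of the $\otimes_\beta$ in the union, whence $P_1\otimes_{\alpha^{proj}}\cdots\otimes_{\alpha^{proj}}P_l$ is contained in the union (indeed it equals it, being the largest member) and a fortiori in its closed convex hull. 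The two inclusions give the claimed equality.

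The content is genuinely routine once the duality dictionary is in place; the only point demanding care, and the step I would treat as the real obstacle, is bookkeeping the order-reversal of polarity correctly at each application, making sure that \emph{smallest injective above} $\otimes_{\alpha^{\prime}}$ translates into \emph{largest projective below} $\otimes_\alpha$ and not the reverse.
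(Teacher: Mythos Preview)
Your argument for the first assertion is essentially the paper's: both dualize to Proposition \ref{prop:closest injective product} via Theorem \ref{thm:duality inj and proj} and Proposition \ref{prop:property dual product}(2),(3); you simply spell out the maximality step in more detail.

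For the explicit formula, however, you take a different route. The paper derives it by a direct computation: it expands the definition $\otimes_{\alpha^{proj}}=\otimes_{((\alpha^{\prime})^{inj})^{\prime}}$, writes $P_1^{\circ}\otimes_{(\alpha^{\prime})^{inj}}\cdots\otimes_{(\alpha^{\prime})^{inj}}P_l^{\circ}$ as the intersection defining the injective hull, and then applies the polar identity $(\cap_i K_i)^{\circ}=\overline{\text{conv}}\,\cup_i K_i^{\circ}$ together with Theorem \ref{thm:duality inj and proj} to turn the intersection over injective $\otimes_\beta\supseteq\otimes_{\alpha^{\prime}}$ into a union over projective $\otimes_{\beta^{\prime}}\subseteq\otimes_\alpha$. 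Your argument bypasses this calculation entirely: once maximality is established, $\otimes_{\alpha^{proj}}$ is itself one of the $\otimes_\beta$'s in the union and dominates all others, which immediately gives both inclusions. Your approach is shorter and makes clear that the formula is a formal consequence of the extremal characterization; the paper's computation, on the other hand, shows concretely how the convex-hull description arises as the polar dual of the intersection formula (\ref{eq: injective hull}) for the injective hull.
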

\begin{proof}
By Theorem \ref{thm:duality inj and proj} and Proposition \ref{prop:closest injective product}, $\otimes_{\alpha^{proj}}$ is a projective tensor product of convex bodies. Then, from (2) and (3) in Proposition \ref{prop:property dual product}, it follows easily that $\otimes_{\alpha^{proj}}$ is the biggest projective tensor product of convex bodies below $\otimes_{\alpha}.$

To prove the second part, let $P_i\subset\mathbb{E}_i,$ $i=1,\ldots,l$ be convex bodies containing $0$ in the interior. Then, by Theorem \ref{thm:duality inj and proj}, we have:
\begin{multline*}
P_1\otimes_{\alpha^{proj}}\cdots\otimes_{\alpha^{proj}}P_l =P_1\otimes_{((\alpha^{\prime})^{inj})^{\prime}}\cdots
\otimes_{((\alpha^{\prime})^{inj})^{\prime}}P_l
=(P_1^{\circ}\otimes_{(\alpha^{\prime})^{inj}}\cdots
\otimes_{(\alpha^{\prime})^{inj}}P_l^{\circ})^{\circ}=\\
\left(\underset{\stackrel{\otimes_{\beta}}{{inj}}}{\cap}\left\{ P_1^{\circ}\otimes_{\beta}\cdots
\otimes_{\beta}P_l^{\circ}:\otimes_{\alpha^{\prime}}
\subseteq\otimes_{\beta}\right\}\right)^{\circ}
\overset{*}{=}\overline{\text{conv}}\left(\underset{\stackrel{\otimes_{\beta}}{
\text{inj}}}{\cup}\left\{ (P_1^{\circ}\otimes_{\beta}\cdots
\otimes_{\beta}P_l^{\circ})^{\circ}:\otimes_{\alpha^{\prime}}
\subseteq\otimes_{\beta}\right\}\right)=\\
\overline{\text{conv}}\left(\underset{\stackrel{\otimes_{\beta^{\prime}}}{
\text{proj}}}{\cup}\left\{ P_1\otimes_{\beta^{\prime}}\cdots\otimes_{\beta^{\prime}}
P_l:\otimes_{\beta^{\prime}}\subseteq
\otimes_{\alpha^{\prime\prime}}\right\}\right)
\overset{**}{=}\overline{\text{conv}}\left(\underset{
\stackrel{\otimes_{\gamma}}{\text{proj}}}{\cup}\left\{ P_1\otimes_{\gamma}\cdots\otimes_{\gamma}P_l:\otimes_{\gamma}\subseteq\otimes_{\alpha}\right\}\right).
\end{multline*}
*It is  used that  $(\cap_{i\in I}K_i)^{\circ}=\overline{\text{conv}}\cup_{i\in I}K_i^{\circ}$ (see \cite[Theorem 1.6.2]{Schneider1993}).

\noindent**It is used (2) in Proposition \ref{prop:property dual product}.
\end{proof}

\section{Tensor products on finite dimensional Banach spaces and tensor products of $0$-symmetric convex bodies: a bijective correspondence.}\label{sec: tensor product 0symm}
We apply the results in the previous  section to the case of
$0$-symmetric convex bodies. The  theory of tensor products   thus derived is  consistent with the theory of tensor products of finite dimensional Banach spaces. The precise statement is written in Theorem \ref{thm: main theorem the bijection}.

 \begin{defn}
\label{defn: tensor prod of 0symm cb}
A \textbf{tensor product $\otimes_{\alpha}$ of
order $l$ of $0$-symmetric  convex bodies} is an assignment of a
$0$-symmetric convex body $P_{1}\otimes_{\alpha}\cdots
\otimes_{\alpha}P_{l}\subset\otimes_{i=1}^{l}\mathbb{\mathbb{E}}_{i}$,
 to each $l$-tuple  $P_{i}\subset\mathbb{E}_{i},$ $i=1,...,l,$ of $0$-symmetric convex bodies,
such that the following conditions are satisfied:
\begin{enumerate}
\item $P_{1}\otimes_{\pi}\cdots\otimes_{\pi}P_{l}\subseteq P_{1}\otimes_{\alpha}\cdots\otimes_{\alpha}P_{l}\subseteq P_{1}\otimes_{\epsilon}\cdots\otimes_{\epsilon}P_{l}.$

\item \textit{(Uniform property)} For every linear mapping $T_{i}:\mathbb{E}_{i}\rightarrow\mathbb{F}_{i},$ $i=1,..,l$, if  $Q_i\subset\mathbb{F}_i,$  are $0$-symmetric convex bodies and $T_i\left(P_i\right)\subseteq Q_i,$ then
\[
T_{1}\otimes\cdots\otimes T_{l}\left(P_{1}\otimes_{\alpha}\cdots\otimes_{\alpha}P_{l}\right)\subseteq Q_{1}\otimes_{\alpha}\cdots\otimes_{\alpha}Q_{l}.
\]
\end{enumerate}
where  $\otimes_{\pi}$ and $\otimes_{\epsilon}$ are as in Section \ref{sec:inj proj tensor}.
\end{defn}

It can be directly checked that  $\otimes_{\pi}$ and $\otimes_{\epsilon}$ satisfy Definition \ref{defn: tensor prod of 0symm cb}. It also follows from the following more general result:

\begin{prop}\label{prop: symetry of P} Let $\otimes_{\alpha}$ be a tensor product of convex bodies.
If  at least one of the convex bodies with $0$ in its interior $P_i\subset \mathbb{E}_i$, $i=1,\ldots,l$ is  $0$-symmetric, then
$P_{1}\otimes_{\alpha}\cdots\otimes_{\alpha}P_{l}$ is a $0$-symmetric convex body.
\end{prop}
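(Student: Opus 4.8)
The plan is to reduce $0$-symmetry to the behaviour of $\otimes_{\alpha}$ under the single sign-change map applied to the symmetric factor, invoking the linear invariance recorded in Proposition \ref{prop:linear invariance of products} (equivalently, the uniform property of $\otimes_{\alpha}$).

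First I would fix an index $j$ with $P_j=-P_j$ and introduce the bijective linear maps $T_i\colon\mathbb{E}_i\rightarrow\mathbb{E}_i$ given by $T_i=I_{\mathbb{E}_i}$ for $i\neq j$ and $T_j=-I_{\mathbb{E}_j}$. Then $T_i(P_i)=P_i$ for every $i$: trivially for $i\neq j$, and for $i=j$ precisely because $T_j(P_j)=-P_j=P_j$ by the $0$-symmetry hypothesis. Since all the $T_i$ are bijective, Proposition \ref{prop:linear invariance of products} applies and gives
\begin{equation*}
T_1\otimes\cdots\otimes T_l\big(P_1\otimes_{\alpha}\cdots\otimes_{\alpha}P_l\big)=T_1P_1\otimes_{\alpha}\cdots\otimes_{\alpha}T_lP_l=P_1\otimes_{\alpha}\cdots\otimes_{\alpha}P_l,
\end{equation*}
where the second equality uses $T_iP_i=P_i$.

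The remaining step is to identify the operator $T_1\otimes\cdots\otimes T_l$. On a decomposable vector it acts by $x^1\otimes\cdots\otimes x^l\mapsto x^1\otimes\cdots\otimes(-x^j)\otimes\cdots\otimes x^l=-\,x^1\otimes\cdots\otimes x^l$; as the decomposable vectors span $\otimes_{i=1}^{l}\mathbb{E}_i$ and the map is linear, we conclude $T_1\otimes\cdots\otimes T_l=-I$ on all of $\otimes_{i=1}^{l}\mathbb{E}_i$. Substituting this into the displayed identity yields $-\big(P_1\otimes_{\alpha}\cdots\otimes_{\alpha}P_l\big)=P_1\otimes_{\alpha}\cdots\otimes_{\alpha}P_l$, which is exactly the assertion that $P_1\otimes_{\alpha}\cdots\otimes_{\alpha}P_l$ is $0$-symmetric.

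There is no serious obstacle in this argument; the only points requiring care are that one wants the \emph{equality} coming from linear invariance (which is available because the $T_i$ are bijective) and the routine verification that negating one factor of a tensor product negates the whole tensor. I would note in passing that condition (2) of Definition \ref{defn: tensor prod of cb} alone already yields the inclusion $-\big(P_1\otimes_{\alpha}\cdots\otimes_{\alpha}P_l\big)\subseteq P_1\otimes_{\alpha}\cdots\otimes_{\alpha}P_l$, and since $x\mapsto -x$ is an inclusion-preserving involution this inclusion upgrades by itself to the desired equality.
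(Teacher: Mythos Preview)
Your argument is correct and follows essentially the same route as the paper: apply Proposition \ref{prop:linear invariance of products} to the bijective maps that negate the symmetric factor and fix the rest, then identify the resulting tensor operator with $-I$. Your exposition is somewhat more explicit (spelling out the action on decomposables and noting the involution shortcut), but the idea is identical.
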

\begin{proof}  Assume, w.l.o.g. that $P_1\subset \mathbb{E}_1$,  is $0$-symmetric. Consider the identity map on $\mathbb{E}_i$, $I_i=I_{\mathbb{E}_i}$ for $i=1,\ldots,l.$ By Proposition \ref{prop:linear invariance of products} $(-I_1)\otimes\cdots\otimes I_{l}(P_{1}\otimes_{\alpha}\cdots\otimes_{\alpha}P_{l})=(-P_{1})\otimes_{\alpha}\cdots\otimes_{\alpha}P_{l}.$ Since $P_1=-P_1$, we have $-(P_{1}\otimes_{\alpha}\cdots\otimes_{\alpha}P_{l})=P_{1}\otimes_{\alpha}\cdots\otimes_{\alpha}P_{l}$,  as required.
\end{proof}

\begin{cor}
\label{coro: tensor induce tensor 0-symm}
Every  tensor product of convex bodies  induces a tensor product of $0$-symmetric convex bodies.
\end{cor}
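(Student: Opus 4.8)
The plan is to obtain the induced tensor product of $0$-symmetric convex bodies by simply \emph{restricting} the given $\otimes_{\alpha}$ to $l$-tuples of $0$-symmetric convex bodies, and then verifying that this restriction meets the two requirements of Definition \ref{defn: tensor prod of 0symm cb}. The first thing I would check is that this restriction even makes sense: every $0$-symmetric convex body is in particular a convex body with $0$ in its interior. Indeed, if $P=-P$ has nonempty interior, then for any $x\in\text{int}(P)$ we have $-x\in\text{int}(-P)=\text{int}(P)$, so $0=\tfrac{1}{2}x+\tfrac{1}{2}(-x)\in\text{int}(P)$ by convexity. Hence $\otimes_{\alpha}$ is already defined on every $l$-tuple of elements of $\mathcal{B}(\mathbb{E}_i)$.

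Next I would confirm that the output lands in the right class, i.e.\ that $P_{1}\otimes_{\alpha}\cdots\otimes_{\alpha}P_{l}$ is again $0$-symmetric whenever all the $P_i$ are. This is exactly the content of Proposition \ref{prop: symetry of P} (with ``at least one'' specialized to ``all''), and it is the single nontrivial ingredient; since it has already been proved, the restricted assignment genuinely takes values among $0$-symmetric convex bodies.

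It then remains to verify the two defining conditions, and I expect this to be routine rather than an obstacle. Condition (1), the sandwich $P_{1}\otimes_{\pi}\cdots\otimes_{\pi}P_{l}\subseteq P_{1}\otimes_{\alpha}\cdots\otimes_{\alpha}P_{l}\subseteq P_{1}\otimes_{\epsilon}\cdots\otimes_{\epsilon}P_{l}$, is inherited verbatim from condition (1) of Definition \ref{defn: tensor prod of cb}, as those inclusions hold for arbitrary convex bodies with $0$ in the interior. For condition (2), given linear maps $T_{i}\colon\mathbb{E}_{i}\to\mathbb{F}_{i}$ and $0$-symmetric convex bodies $Q_i\subset\mathbb{F}_i$ with $T_i(P_i)\subseteq Q_i$, the required inclusion $T_{1}\otimes\cdots\otimes T_{l}(P_{1}\otimes_{\alpha}\cdots\otimes_{\alpha}P_{l})\subseteq Q_{1}\otimes_{\alpha}\cdots\otimes_{\alpha}Q_{l}$ is a direct instance of the uniform property of $\otimes_{\alpha}$ from Definition \ref{defn: tensor prod of cb}, the point once more being that the $Q_i$ are convex bodies with $0$ in the interior and that the right-hand side is $0$-symmetric by Proposition \ref{prop: symetry of P}. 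The only thing to keep an eye on throughout is precisely this observation that the hypothesis ``$0$-symmetric'' is subsumed by ``$0$ in the interior'', which lets every result of Section \ref{sec: tensorial convex bodies} apply unchanged.
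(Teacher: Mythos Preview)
Your proof is correct and follows exactly the approach the paper intends: the corollary is stated without proof immediately after Proposition \ref{prop: symetry of P}, and the implicit argument is precisely the one you give---restrict $\otimes_{\alpha}$ to $0$-symmetric inputs, invoke Proposition \ref{prop: symetry of P} for the symmetry of the output, and inherit conditions (1) and (2) of Definition \ref{defn: tensor prod of 0symm cb} verbatim from Definition \ref{defn: tensor prod of cb}.
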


\subsection{Dual tensor product, injectivity and projectivity}\label{subs: analogue for 0-symm}
Using Corollary  \ref{coro: tensor induce tensor 0-symm}, it follows that the notions and  results  of  the previous section have analogues for $0$-symmetric convex bodies. We will use them, but we omit  the proofs.
Concretely, we say that $\otimes_{\alpha}$ is {\bf injective} if it satisfies (\ref{eq: def injectivity}) and {\bf projective} if it satisfies (\ref{eq: def projectivity}) for $0$-symmetric convex bodies.

\begin{prop}  Let $\otimes_{\alpha}$ be a tensor product of $0$-symmetric convex bodies. Then,
\begin{enumerate}
  \item Relation (\ref{eqn: dual tensor product}) defines a  tensor product of $0$-symmetric convex bodies $\boldsymbol{\otimes_{\alpha^{\prime}}}$. It will be called the  {\bf dual tensor product} of  $\otimes_{\alpha}$.
  \item $\otimes_{\alpha}$ is injective if and only if $\otimes_{\alpha^{\prime}}$ is projective.
\end{enumerate}
\end{prop}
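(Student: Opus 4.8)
The plan is to show that every construction and every argument of Section~\ref{subsec: Tensor product of convex bodies} respects central symmetry, so that the proofs given there for general convex bodies restrict verbatim to the $0$-symmetric setting. The starting observation is that polarity preserves $0$-symmetry: if $C=-C$ then $C^{\circ}=-C^{\circ}$, since $\langle x,y\rangle_{H}=\langle -x,-y\rangle_{H}$; likewise any linear image of a $0$-symmetric set is $0$-symmetric.

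For part~(1), I would first check that $\otimes_{\alpha^{\prime}}$ takes values in $\mathcal{B}(\otimes_{i=1}^{l}\mathbb{E}_{i})$. Given $0$-symmetric $P_{i}$, each $P_{i}^{\circ}$ is $0$-symmetric, so $P_{1}^{\circ}\otimes_{\alpha}\cdots\otimes_{\alpha}P_{l}^{\circ}$ is a $0$-symmetric convex body (because $\otimes_{\alpha}$ is a tensor product of $0$-symmetric convex bodies), and hence its polar $P_{1}\otimes_{\alpha^{\prime}}\cdots\otimes_{\alpha^{\prime}}P_{l}$ is again a $0$-symmetric convex body with $0$ in its interior. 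Conditions~(1) and~(2) of Definition~\ref{defn: tensor prod of 0symm cb} are then verified by repeating, word for word, the computation given immediately after Definition~\ref{defn: dual tensor prod of cb}: the nesting $\otimes_{\pi}\subseteq\otimes_{\alpha}\subseteq\otimes_{\epsilon}$ together with Proposition~\ref{prop: duality} yields condition~(1) after taking polars, while the uniform property is obtained by applying the uniform property of $\otimes_{\alpha}$ to the transposes $T_{i}^{t}$ and the bodies $Q_{i}^{\circ},P_{i}^{\circ}$.

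The point to watch, and essentially the only verification the whole reduction requires, is precisely this last invocation: in the $0$-symmetric definition the uniform property of $\otimes_{\alpha}$ is available \emph{only} when the target bodies are $0$-symmetric. This causes no difficulty, because every body appearing as a target in the transferred arguments ($P_{i}^{\circ}$ here, and below the images under transposes and under the orthogonal projections $L_{j}$) is a polar or a linear image of a $0$-symmetric body, hence itself $0$-symmetric. Thus each application of the uniform property is legitimate in the restricted setting.

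For part~(2), I would first record the $0$-symmetric analogues of Proposition~\ref{prop:property dual product}, in particular the involution $\otimes_{\alpha^{\prime\prime}}=\otimes_{\alpha}$, which holds by the same argument (the definition of $\otimes_{\alpha^{\prime}}$ together with the bipolar theorem). Next, the $0$-symmetric analogue of Theorem~\ref{thm:duality inj and proj} holds by the same proof: the sections $P_{j}\cap M_{j}$, the induced polars $(P_{j}\cap M_{j})^{\diamond}$, and the projections $L_{j}(P_{j}^{\circ})$ are all $0$-symmetric, so the chain of equalities there goes through unchanged and shows that, for any tensor product of $0$-symmetric convex bodies $\otimes_{\beta}$, $\otimes_{\beta}$ is projective if and only if $\otimes_{\beta^{\prime}}$ is injective. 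Applying this with $\otimes_{\beta}=\otimes_{\alpha^{\prime}}$ and using $\otimes_{\alpha^{\prime\prime}}=\otimes_{\alpha}$ gives that $\otimes_{\alpha^{\prime}}$ is projective if and only if $\otimes_{\alpha}$ is injective, which is exactly the claimed equivalence.
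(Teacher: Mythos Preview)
Your proposal is correct and follows the same approach that the paper intends: the paper omits the proof entirely, stating only that ``the notions and results of the previous section have analogues for $0$-symmetric convex bodies,'' and you make this explicit by checking that every intermediate body appearing in the arguments of Section~\ref{subsec: Tensor product of convex bodies} (polars, sections, orthogonal projections, transpose images) is again $0$-symmetric, so the uniform property of Definition~\ref{defn: tensor prod of 0symm cb} is always legitimately invoked. Your handling of part~(2), deducing the stated equivalence from the $0$-symmetric analogue of Theorem~\ref{thm:duality inj and proj} together with $\otimes_{\alpha^{\prime\prime}}=\otimes_{\alpha}$, is exactly the intended reduction.
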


 Proposition \ref{prop:property dual product}, as well as the definitions and results in  Subsection \ref{subs: inj proj hulls} concerning injective $\otimes_{\alpha^{inj}}$ and projective $\otimes_{\alpha^{proj}}$ hulls,  also have analogues.

\subsection{Relation with the Banach-Mazur distance}\label{subs: Banach-Mazur dist}
We will see now that
a  tensor product of $0$-symmetric convex bodies is uniformly continuous with respect to the Banach-Mazur distance. Recall that the Banach-Mazur distance
between $0$-symmetric convex bodies in a Euclidean space $\mathbb{E}$  is defined as:
\[
\delta^{BM}\left(P,Q\right):=\inf\left\{ \lambda\geq1:T:\mathbb{E}\rightarrow\mathbb{E}\text{ is a bijective linear map and }Q\subseteq TP\subseteq\lambda Q\right\}.
\]
A complete exposition of the Banah-Mazur distance can be found in \cite{Tomczak-Jaegermann1989}.

\begin{prop}
\label{prop:continuity wrt BM distance}
Let $P_{i},Q_{i}\subset\mathbb{E}_{i}$, $i=1,...,l$, be $0$-symmetric convex bodies.
If $\otimes_{\alpha}$ is a tensor product convex bodies, then
\[
\delta^{BM}\left(P_{1}\otimes_{\alpha}\cdots\otimes_{\alpha}P_{l},Q_{1}\otimes_{\alpha}\cdots\otimes_{\alpha}Q_{l}\right)\leq\delta^{BM}\left(P_{1},Q_{1}\right)\cdots\delta^{BM}\left(P_{l},Q_{l}\right).
\]
\end{prop}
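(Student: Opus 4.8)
The plan is to reduce the multiplicative estimate on the Banach--Mazur distance to the single-factor defining inequalities together with the uniform property (condition (2) of Definition \ref{defn: tensor prod of cb}) and the linear invariance established in Proposition \ref{prop:linear invariance of products}. First I would fix, for each index $i$, a real number $\lambda_i > \delta^{BM}(P_i,Q_i)$ and a bijective linear map $T_i:\mathbb{E}_i\to\mathbb{E}_i$ witnessing it, so that
\[
Q_i \subseteq T_i P_i \subseteq \lambda_i Q_i, \qquad i=1,\ldots,l.
\]
The goal will then be to produce a single bijective linear map on $\otimes_{i=1}^{l}\mathbb{E}_i$ realizing a sandwich of $Q_1\otimes_\alpha\cdots\otimes_\alpha Q_l$ between $P_1\otimes_\alpha\cdots\otimes_\alpha P_l$ (transported) and $(\lambda_1\cdots\lambda_l)$ times it. The natural candidate is $T:=T_1\otimes\cdots\otimes T_l$, which is bijective because each $T_i$ is.

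The key computation is to apply $\otimes_\alpha$ to the two chains of inclusions above. For the lower inclusion $Q_i\subseteq T_iP_i$, the uniform property (taking the $l$ maps to be the identities $I_{\mathbb{E}_i}$, with source bodies $Q_i$ and target bodies $T_iP_i$) gives
\[
Q_1\otimes_\alpha\cdots\otimes_\alpha Q_l \subseteq (T_1P_1)\otimes_\alpha\cdots\otimes_\alpha(T_lP_l),
\]
and by Proposition \ref{prop:linear invariance of products} the right-hand side equals $T\left(P_1\otimes_\alpha\cdots\otimes_\alpha P_l\right)$. For the upper inclusion $T_iP_i\subseteq\lambda_iQ_i$, I would again invoke the uniform property (identities, with source bodies $T_iP_i$ and target bodies $\lambda_iQ_i$) to obtain
\[
(T_1P_1)\otimes_\alpha\cdots\otimes_\alpha(T_lP_l) \subseteq (\lambda_1 Q_1)\otimes_\alpha\cdots\otimes_\alpha(\lambda_l Q_l).
\]
Here I would use that scalar dilations behave correctly under $\otimes_\alpha$: since $\lambda_i I_{\mathbb{E}_i}$ is a bijective linear map and $(\lambda_iI_{\mathbb{E}_i})(Q_i)=\lambda_iQ_i$, Proposition \ref{prop:linear invariance of products} yields
\[
(\lambda_1 Q_1)\otimes_\alpha\cdots\otimes_\alpha(\lambda_l Q_l)=(\lambda_1 I\otimes\cdots\otimes\lambda_l I)\left(Q_1\otimes_\alpha\cdots\otimes_\alpha Q_l\right)=(\lambda_1\cdots\lambda_l)\,Q_1\otimes_\alpha\cdots\otimes_\alpha Q_l,
\]
the last equality because $\lambda_1 I\otimes\cdots\otimes\lambda_l I=(\lambda_1\cdots\lambda_l)I_{\otimes\mathbb{E}_i}$ acts as a pure scalar on the tensor product.

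Combining the three displays and writing $\lambda:=\lambda_1\cdots\lambda_l$, I obtain
\[
Q_1\otimes_\alpha\cdots\otimes_\alpha Q_l \subseteq T\left(P_1\otimes_\alpha\cdots\otimes_\alpha P_l\right) \subseteq \lambda\,\bigl(Q_1\otimes_\alpha\cdots\otimes_\alpha Q_l\bigr),
\]
so $T=T_1\otimes\cdots\otimes T_l$ witnesses $\delta^{BM}\left(P_1\otimes_\alpha\cdots\otimes_\alpha P_l,\,Q_1\otimes_\alpha\cdots\otimes_\alpha Q_l\right)\le\lambda_1\cdots\lambda_l$. Taking the infimum over all admissible $\lambda_i>\delta^{BM}(P_i,Q_i)$ gives the claimed bound. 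I do not anticipate a serious obstacle: the only point requiring slight care is the bookkeeping that the product of the witnessing maps is again bijective and that scalar factors pull out of the tensored dilation as a single scalar $\lambda_1\cdots\lambda_l$; both are immediate, but they are exactly where the multiplicativity of the Banach--Mazur bound comes from, so they should be spelled out rather than taken for granted.
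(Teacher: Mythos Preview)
Your proof is correct and uses the same ingredients as the paper---the uniform property together with Proposition \ref{prop:linear invariance of products}---with only a minor organizational difference: you handle all $l$ factors simultaneously via $T=T_1\otimes\cdots\otimes T_l$, whereas the paper changes one factor at a time and then concludes via the multiplicative triangle inequality for $\delta^{BM}$. Your all-at-once argument is slightly more direct and avoids invoking the triangle inequality, but the core idea is the same.
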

\begin{proof}
Let us fix   $i\in\left\{ 1,...,l\right\}$   and let $\lambda\geq\delta^{BM}\left(P_{i},Q_{i}\right)$.  Then, there exists  a linear isomorphism $T_{i}:\mathbb{E}_{i}\rightarrow\mathbb{E}_{i}$ such that
$Q_{i}\subseteq T_{i}\left(P_{i}\right)\subseteq\lambda Q_{i}$.
 By Proposition \ref{prop:linear invariance of products}, we have $P_{1}\otimes_{\alpha}\cdots\otimes_{\alpha}T_{i}P_{i}
 \otimes_{\alpha}\cdots\otimes_{\alpha}P_{l}=I_{\mathbb{E}_{1}}
 \otimes\cdots\otimes T_{i}\otimes\cdots\otimes I_{\mathbb{E}_{l}}\left(P_{1}\otimes_{\alpha}\cdots
 \otimes_{\alpha}P_{i}\otimes_{\alpha}\cdots
 \otimes_{\alpha}P_{l}\right).$ Therefore, if $S=I_{\mathbb{E}_{1}}\otimes\cdots\otimes T_{i}\otimes\cdots\otimes I_{\mathbb{E}_{l}}$ then
 \begin{multline*}
P_{1}\otimes_{\alpha}\cdots\otimes_{\alpha}Q_{i}\otimes_{\alpha}
\cdots\otimes_{\alpha}P_{l} \subseteq S\left(P_{1}\otimes_{\alpha}\cdots\otimes_{\alpha}P_{i}
\otimes_{\alpha}\cdots\otimes_{\alpha}P_{l}\right)\\
 \subseteq P_{1}\otimes_{\alpha}\cdots\otimes_{\alpha}\lambda Q_{i}\otimes_{\alpha}\cdots\otimes_{\alpha}P_{l}
 =\lambda\left(P_{1}\otimes_{\alpha}\cdots\otimes_{\alpha}Q_{i}\otimes_{\alpha}\cdots\otimes_{\alpha}P_{l}\right).
\end{multline*}
Consequently,
  \begin{equation*}
\delta^{BM}\left(P_{1}\otimes_{\alpha}\cdots\otimes_{\alpha}P_{i}\otimes_{\alpha}\cdots\otimes_{\alpha}P_{l},P_{1}\otimes_{\alpha}\cdots\otimes_{\alpha}Q_{i}\otimes_{\alpha}\cdots\otimes_{\alpha}P_{l}\right)\leq\delta^{BM}\left(P_{i},Q_{i}\right).\label{eq:cota banac mazur proposicion}
\end{equation*}
The result follows by iterating  this relation  along with  the multiplicative triangle inequality of $\delta^{BM}$.
\end{proof}



\subsection{Bijection with tensor norms}\label{subs: Tensor norms}

Recall that
a  \textbf{tensor norm $\alpha$} of order\textbf{ $l$} on the class of  finite dimensional  (abbreviated  f.d.) Banach spaces  assigns to each $l$-tuple
$M_{1},\ldots,M_{l}$ of f.d. Banach spaces a norm $\alpha_{M_{1},\ldots,M_{l}}$
on the tensor product $\otimes_{i=1}^{l}M_{i}$  (simply denoted $\alpha$, and  called a \textbf{tensor norm}) such that the two
following conditions are satisfied:
\begin{enumerate}
\item $\alpha$ is a \textbf{reasonable crossnorm}.
\item $\alpha$ satisfies the \textbf{uniform property} i.e. for each $T_{i}\in\mathcal{L}\left(M_{i};N_{i}\right)$ $i=1,\ldots,l,$
\[
\left\Vert T_{1}\otimes\cdots\otimes T_{l}:\left(\otimes_{i=1}^{l}M_{i},\alpha\right)
\rightarrow\left(\otimes_{i=1}^{l}N_{i},\alpha\right)\right\Vert \leq\left\Vert T_{1}\right\Vert \cdots\left\Vert T_{l}\right\Vert .
\]
\end{enumerate}
In relation with the so called {\sl  Minkowski functional}, it was seen in \cite[Corollary 3.4]{tensorialbodies} that  a   {\sl reasonable crossnorm}   corresponds to  a    {\sl tensorial body} (see  \cite[Definition 3.3]{tensorialbodies}) and vice versa.

Given a tensor norm  $\alpha\left(\cdot\right)$  on $\otimes_{i=1}^{l}M_{i}$,
$\otimes_{\alpha,i=1}^{l}M_{i}$ will denote the normed  space $\left(\otimes_{i=1}^{l}M_{i},\alpha\right)$.

  The dual tensor norm  of $\alpha$, $\alpha^{\prime}$, is defined as follows: given any f.d. normed spaces   $M_{1},\ldots,M_{l}$, for  every $u\in\otimes_{i=1}^{l}M_{i},$
\begin{equation}\label{eq: defn dual norm}
\alpha_{M_{1},...,M_{l}}^{\prime}\left(u\right):=\sup\left\{ \left|\varphi\left(u\right)\right|:\alpha_{M_{1}^{*},
\ldots,M_{l}^{*}}\left(\varphi\right)\leq1\right\}.
\end{equation}

A  tensor norm $\alpha$ is called {\bf injective} if for each subspace $E_i$ of a Banach space $X_i,$ $\otimes_{\alpha,i=1}^{l}E_i$ is a subspace of $\otimes_{\alpha,i=1}^{l}X_i.$ A tensor norm $\alpha$ is called {\bf projective} is for every $l$-tuple of  quotient operators $S_i:X_i\rightarrow Y_i$, $S_1\otimes\cdots\otimes S_l:\otimes_{\alpha,i=1}^{l}X_i\rightarrow \otimes_{\alpha,i=1}^{l}Y_i$ is a quotient operator.

For a detailed exposition  about tensor norms on Banach spaces, we suggest the reader the monographs \cite{defantfloret, diestelfourie, Ryan2013}.

\begin{prop}
\label{prop:independence of scalar product}
Let $\otimes_{\alpha}$ be a tensor product of $0$-symmetric convex
bodies.  For every $l$-tuple $M_{i},$  $i=1,...,l,$ of f.d. Banach spaces, consider
\begin{equation}\label{eq: tensor norm given convex}
 \left\Vert z\right\Vert _{\otimes_{\alpha}}:=g_{B_{1}\otimes_{\alpha}\cdots\otimes_{\alpha}B_{l}}\left(z\right)\text{ for }z\in\otimes_{i=1}^{l}M_{i}
\end{equation}
where $B_{i}$  is the closed unit  ball of $M_{i}$.  Then, $\left\Vert \cdot\right\Vert _{\otimes_{\alpha}}$ defines a tensor norm  on the class of finite dimensional Banach spaces.
\end{prop}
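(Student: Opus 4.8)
The plan is to show that the function $\|\cdot\|_{\otimes_\alpha}$ defined by the Minkowski functional of $B_1\otimes_\alpha\cdots\otimes_\alpha B_l$ satisfies the two defining conditions of a tensor norm: that it is a reasonable crossnorm on each fixed tuple, and that it satisfies the uniform property. First I would note that since $B_i$ is the unit ball of the f.d.\ Banach space $M_i$, it is a $0$-symmetric convex body (with $0$ in the interior, as all $B_i$ are bounded and absorbing in finite dimensions). Hence $B_1\otimes_\alpha\cdots\otimes_\alpha B_l$ is a $0$-symmetric convex body by Definition \ref{defn: tensor prod of 0symm cb}, and by Minkowski's bijection its gauge $\|\cdot\|_{\otimes_\alpha}$ is a genuine norm on $\otimes_{i=1}^l M_i$ whose unit ball is exactly $B_1\otimes_\alpha\cdots\otimes_\alpha B_l$.

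The reasonable-crossnorm property follows directly by applying the gauge functional to condition (1) of Definition \ref{defn: tensor prod of 0symm cb}. Since
\[
B_{1}\otimes_{\pi}\cdots\otimes_{\pi}B_{l}\subseteq B_{1}\otimes_{\alpha}\cdots\otimes_{\alpha}B_{l}\subseteq B_{1}\otimes_{\epsilon}\cdots\otimes_{\epsilon}B_{l},
\]
and since for nested $0$-symmetric convex bodies $C\subseteq D$ one has $g_D\le g_C$, the inclusions give $g_{B_{1}\otimes_{\epsilon}\cdots\otimes_{\epsilon}B_{l}}\le\|\cdot\|_{\otimes_\alpha}\le g_{B_{1}\otimes_{\pi}\cdots\otimes_{\pi}B_{l}}$. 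By the identifications in \eqref{eq: p. inj and p.proj  unit ball inj. norm and proj norm}, which say the projective and injective tensor products of the unit balls $B_i$ are precisely the unit balls of the norms $\pi$ and $\epsilon$, the outer gauges are exactly $\epsilon(\cdot)$ and $\pi(\cdot)$. This yields $\epsilon(z)\le\|z\|_{\otimes_\alpha}\le\pi(z)$ for all $z$, which is \eqref{eq:reasonablecrossnorms def}, so $\|\cdot\|_{\otimes_\alpha}$ is a reasonable crossnorm.

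For the uniform property, I would take operators $T_i\in\mathcal L(M_i;N_i)$ and reduce to the normalized case; scaling out $\|T_i\|$, it suffices to prove that if $\|T_i\|\le 1$ for all $i$, then $T_1\otimes\cdots\otimes T_l$ has norm $\le 1$ as a map $\otimes_{\alpha}M_i\to\otimes_{\alpha}N_i$. The condition $\|T_i\|\le 1$ means precisely $T_i(B_i)\subseteq B_i'$, where $B_i'$ is the unit ball of $N_i$. Applying the uniform property of $\otimes_\alpha$ (condition (2) of Definition \ref{defn: tensor prod of 0symm cb}) with $Q_i=B_i'$ gives
\[
T_{1}\otimes\cdots\otimes T_{l}\bigl(B_{1}\otimes_{\alpha}\cdots\otimes_{\alpha}B_{l}\bigr)\subseteq B_{1}'\otimes_{\alpha}\cdots\otimes_{\alpha}B_{l}'.
\]
Since the left-hand ball is the unit ball of $(\otimes M_i,\|\cdot\|_{\otimes_\alpha})$ and the right-hand ball is the unit ball of $(\otimes N_i,\|\cdot\|_{\otimes_\alpha})$, this inclusion says exactly that $T_1\otimes\cdots\otimes T_l$ maps the closed unit ball of the domain into the closed unit ball of the codomain, i.e.\ its operator norm is $\le 1$. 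Undoing the normalization recovers $\|T_1\otimes\cdots\otimes T_l\|\le\|T_1\|\cdots\|T_l\|$, which is condition (2) in the definition of a tensor norm.

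I do not expect a serious obstacle here: the whole statement is essentially a dictionary translation, via the Minkowski-gauge correspondence, of the two conditions defining a tensor product of $0$-symmetric convex bodies into the two conditions defining a tensor norm. The only points requiring mild care are the inclusion-reversing behaviour of the gauge (so that set inclusions become the correct norm inequalities) and the handling of degenerate operators $T_i$ with $\|T_i\|=0$, where the normalization argument should be stated as a limiting or direct case; neither is genuinely difficult.
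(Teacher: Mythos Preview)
Your argument for the reasonable-crossnorm property and the uniform property is correct and matches the paper's. However, you have skipped a step that the paper treats explicitly and that is in fact the main content of the proposition (note the internal label \texttt{prop:independence of scalar product}): well-definedness of $\|\cdot\|_{\otimes_\alpha}$.

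The assignment $\otimes_\alpha$ in Definition~\ref{defn: tensor prod of 0symm cb} takes as input $0$-symmetric convex bodies $P_i\subset\mathbb{E}_i$ sitting inside \emph{Euclidean} spaces; the Euclidean structure is part of the data (it enters, for instance, through the polar in the definition of $\otimes_\epsilon$). A finite-dimensional Banach space $M_i$ has no preferred scalar product, so to even form $B_1\otimes_\alpha\cdots\otimes_\alpha B_l$ you must first choose one on each $M_i$. A priori the resulting convex body, and hence the gauge $\|\cdot\|_{\otimes_\alpha}$, could depend on that choice; if it did, you would not have a tensor norm on the class of f.d.\ Banach spaces, only on the class of f.d.\ Banach spaces equipped with auxiliary inner products. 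The paper closes this gap by observing that if $[\cdot,\cdot]_i$ and $\langle\cdot,\cdot\rangle_i$ are two scalar products on $M_i$, then the identity map $T_i:(M_i,[\cdot,\cdot]_i)\to(M_i,\langle\cdot,\cdot\rangle_i)$ is a linear isomorphism with $T_iB_{M_i}=B_{M_i}$, and then Proposition~\ref{prop:linear invariance of products} gives
\[
B_{M_1}\otimes_\alpha\cdots\otimes_\alpha B_{M_l}
= T_1\otimes\cdots\otimes T_l\bigl(B_{M_1}\otimes_\alpha\cdots\otimes_\alpha B_{M_l}\bigr)
= T_1B_{M_1}\otimes_\alpha\cdots\otimes_\alpha T_lB_{M_l},
\]
so the body computed with either Euclidean structure is the same subset of $\otimes_{i=1}^l M_i$. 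Once you insert this paragraph, your proof is complete and coincides with the paper's.
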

\begin{proof}
Let $M_{i},$ $i=1,\ldots,l,$ be f.d.  normed spaces and let fix  a scalar product on each of them.  By the discussion in Subsection \ref{subs: analogue for 0-symm} and
(\ref{eq: p. inj and p.proj  unit ball inj. norm and proj norm}), we have that (\ref{eq: tensor norm given convex}) satisfies condition (1) above. That is, it is  a reasonable crossnorm  on $\otimes_{i=1}^{l}M_{i}$.

 For $\left\Vert \cdot\right\Vert _{\otimes_{\alpha}}$ to be well defined on $\otimes_{i=1}^{l}M_{i}$,  we must check that it does not depend on the scalar product we have considered on each $M_i$. To that end, let  $\left[\cdot,\cdot\right]_{i},\left\langle \cdot,\cdot\right\rangle _{i}$
be scalar products on $M_{i}$, let $T_{i}:\left(M_{i},\left[\cdot,\cdot\right]_{i}\right)\rightarrow\left(M_{i},\left\langle \cdot,\cdot\right\rangle _{i}\right)$ denote
the identity map and let $Q_{i}:=T_{i}B_{M_{i}}.$ From   Proposition \ref{prop:linear invariance of products},
\begin{align*}
B_{M_{1}}\otimes_{\alpha}\cdots\otimes_{\alpha}B_{M_{l}}&  = T_{1}\otimes\cdots\otimes T_{l}\left(B_{M_{1}}\otimes_{\alpha}\cdots\otimes_{\alpha}
B_{M_{l}}\right)\\
 & = T_{1}B_{M_{1}}\otimes_{\alpha}\cdots\otimes_{\alpha}T_{l}B_{M_{l}}
  =Q_{1}\otimes_{\alpha}\cdots\otimes_{\alpha}Q_{l}.
\end{align*}

Therefore, $g_{B_{M_{1}}\otimes_{\alpha}\cdots\otimes_{\alpha}B_{M_{l}}}
=g_{Q_{1}\otimes_{\alpha}\cdots\otimes_{\alpha}Q{}_{l}}.$
 Consequently, $\left\Vert \cdot\right\Vert _{\otimes_{\alpha}}$ is a well defined reasonable crossnorm on the class of finite dimensional normed spaces.

To finish, we have to prove that this norm satisfies the uniform property  for tensor norms.  To see this, take $T_{i}\in\mathcal{L}\left(M_{i},N_{i}\right)$
such that $\left\Vert T_{i}\right\Vert \leq1.$ Then, $T_{i}\left(B_{i}\right)\subseteq B_{N_{i}}.$
By the uniform property of $\otimes_{\alpha},$ we have
\[
T_{1}\otimes\cdots\otimes T_{l}\left(B_{1}\otimes_{\alpha}\cdots\otimes_{\alpha}B_{l}\right)\subseteq B_{N_{1}}\otimes_{\alpha}\cdots\otimes_{\alpha}B_{N_{l}}.
\]
This implies that
$
T_{1}\otimes\cdots\otimes T_{l}:\left(\otimes_{i=1}^{l}M_{i},\left\Vert \cdot\right\Vert _{\otimes_{\alpha}}\right)\rightarrow\left(\otimes_{i=1}^{l}N_{i},\left\Vert \cdot\right\Vert _{\otimes_{\alpha}}\right)
$
 has norm $\leq 1$. So, $\left\Vert \cdot\right\Vert _{\otimes_{\alpha}}$
is uniform, as required.
\end{proof}

\begin{prop}
\label{prop:tensor norms determine tensor sym cb}
Let $\|\cdot\|_{\alpha}$ be a tensor norm on f.d. spaces.  For every $l$-tuple $P_{i}\subset\mathbb{E}_{i}$ $i=1,...,l,$  of $0$-symmetric
convex bodies consider
\begin{equation}\label{eq: convex product given tensor norm}
P_{1}\otimes_{{\alpha}}\cdots\otimes_{{\alpha}}P_{l}:=
B_{\otimes_{\|\cdot\|_{\alpha},i=1}^l
\left(\mathbb{\mathbb{E}}_{i},g_{P_{i}}\right)}.
\end{equation}
Then, $\otimes_{{\alpha}}$ defines a  tensor
product of $0$-symmetric convex bodies.
\end{prop}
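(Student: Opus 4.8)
The plan is to verify the two conditions of Definition \ref{defn: tensor prod of 0symm cb} directly, reading everything through Minkowski's bijection between $0$-symmetric convex bodies and norms. First I would record that each $(\mathbb{E}_i,g_{P_i})$ is a finite dimensional normed space whose closed unit ball, viewed inside the ambient space $\mathbb{E}_i$, is precisely $P_i$. Hence $\|\cdot\|_\alpha$ yields a genuine norm on $\otimes_{i=1}^l\mathbb{E}_i$, and its closed unit ball is a compact, convex, $0$-symmetric set with $0$ in the interior; this already shows that $P_1\otimes_\alpha\cdots\otimes_\alpha P_l$ as defined in (\ref{eq: convex product given tensor norm}) is a bona fide $0$-symmetric convex body in $\otimes_{i=1}^l\mathbb{E}_i$. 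I would stress that no choice of scalar product enters here, since the unit ball of a norm is intrinsic to the vector space.

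For condition (1), I would invoke that $\|\cdot\|_\alpha$ is a reasonable crossnorm, so $\epsilon(u)\leq\|u\|_\alpha\leq\pi(u)$ for every $u\in\otimes_{i=1}^l\mathbb{E}_i$. Since a larger norm has a smaller unit ball, these inequalities give
\[
B_{\otimes_{\pi,i=1}^l(\mathbb{E}_i,g_{P_i})}\subseteq B_{\otimes_{\|\cdot\|_\alpha,i=1}^l(\mathbb{E}_i,g_{P_i})}\subseteq B_{\otimes_{\epsilon,i=1}^l(\mathbb{E}_i,g_{P_i})}.
\]
By the identities in (\ref{eq: p. inj and p.proj  unit ball inj. norm and proj norm}), the two outer balls are exactly $P_1\otimes_\pi\cdots\otimes_\pi P_l$ and $P_1\otimes_\epsilon\cdots\otimes_\epsilon P_l$, so condition (1) follows at once.

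For the uniform property (condition (2)), let $T_i:\mathbb{E}_i\rightarrow\mathbb{F}_i$ be linear maps and $Q_i\subset\mathbb{F}_i$ be $0$-symmetric convex bodies with $T_i(P_i)\subseteq Q_i$. The crux is the elementary equivalence that this inclusion says precisely that $T_i:(\mathbb{E}_i,g_{P_i})\rightarrow(\mathbb{F}_i,g_{Q_i})$ has operator norm $\leq1$: if $g_{P_i}(x)\leq1$ then $x\in P_i$, so $T_ix\in Q_i$, i.e. $g_{Q_i}(T_ix)\leq1$. The uniform property of the tensor norm $\alpha$ then gives
\[
\left\|T_1\otimes\cdots\otimes T_l:\otimes_{\|\cdot\|_\alpha,i=1}^l(\mathbb{E}_i,g_{P_i})\rightarrow\otimes_{\|\cdot\|_\alpha,i=1}^l(\mathbb{F}_i,g_{Q_i})\right\|\leq\|T_1\|\cdots\|T_l\|\leq1.
\]
Since an operator of norm at most $1$ carries the unit ball of its domain into that of its codomain, translating back through (\ref{eq: convex product given tensor norm}) yields $T_1\otimes\cdots\otimes T_l(P_1\otimes_\alpha\cdots\otimes_\alpha P_l)\subseteq Q_1\otimes_\alpha\cdots\otimes_\alpha Q_l$, which is condition (2). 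The whole argument is a dictionary translation between the convex-geometric and the normed-space pictures, so I do not expect a serious obstacle; the only point demanding care is the faithful identification of operator-norm bounds with the inclusions $T_i(P_i)\subseteq Q_i$, and of unit balls with the convex bodies in play.
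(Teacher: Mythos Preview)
Your proof is correct and follows essentially the same approach as the paper's own proof: both verify condition (1) by translating the reasonable-crossnorm inequalities $\epsilon\leq\alpha\leq\pi$ into the inclusions of unit balls via (\ref{eq: p. inj and p.proj  unit ball inj. norm and proj norm}), and both obtain the uniform property from the uniform property of the tensor norm. The paper simply writes that the uniform property ``follows directly from the uniform property of the norm $\alpha$'', whereas you have spelled out the dictionary step $T_i(P_i)\subseteq Q_i\Leftrightarrow\|T_i\|\leq1$ explicitly; this extra detail is helpful but does not represent a different argument.
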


\begin{proof}
Let $P_{i}\subset\mathbb{E}_{i}$ be $0$-symmetric convex bodies. Denote by $E_{i}=\left(\mathbb{\mathbb{E}}_{i},g_{P_{i}}\right)$
the normed space whose  closed unit ball is $P_i$.
Since $\|\cdot\|_{\alpha}$ is a tensor norm, then
$B_{\otimes^l_{\|\cdot\|_{\alpha},i=1}\left(\mathbb{\mathbb{E}}_{i},g_{P_{i}}\right)}$ is a $0$-symmetric convex body for which
$
\epsilon\left(z\right)\leq\alpha\left(z\right)\leq\pi\left(z\right),\text{ for }z\in\otimes_{i=1}^{l}E_{i}.
$

Thus, from  (\ref{eq: p. inj and p.proj  unit ball inj. norm and proj norm}) and (\ref{eq: convex product given tensor norm}), we have that
$
P_{1}\otimes_{\pi}\cdots\otimes_{\pi}P_{l}\subseteq P_{1}\otimes_{\|\cdot\|_{\alpha}}\cdots\otimes_{\|\cdot\|_{\alpha}}P_{l}\subseteq P_{1}\otimes_{\epsilon}\cdots\otimes_{\epsilon}P_{l}.
$
The uniform property of $\otimes_{{\alpha}}$ follows directly from the uniform property of the norm $\alpha.$
\end{proof}

\begin{thm}\label{thm: main theorem the bijection}  Let $ \otimes_{\alpha}$ and $  \|\cdot\|_{\otimes_{\alpha}}$
be  as in Propositions  \ref{prop:independence of scalar product} and \ref{prop:tensor norms determine tensor sym cb}.
   Then, the  mapping
\[
\begin{array}{rcl}
  \varPsi: \left\{\text{\stackanchor{Tensor product of}{$0$-symm convex bodies}} \right\}& \longrightarrow & \left\{\text{\stackanchor{Tensor norms on}{f.d. Banach spaces}} \right\} \\
   & & \\
  \otimes_{\alpha} & \mapsto & \|\cdot\|_{\otimes_\alpha}.
\end{array}
\]
is a bijection.  It holds that
\begin{enumerate}
  \item[i)]  $\otimes_{\alpha^{\prime}}$ is dual to   $\otimes_{\alpha}$ if and only if   $\|\cdot\|_{\otimes_{\alpha^{\prime}}}$ is dual to  $\|\cdot\|_{\otimes_\alpha}$.
  \item[ii)] $\otimes_{\alpha}$  is injective (projective) if and only if  $\|\cdot\|_{\otimes_\alpha}$ is injective (resp. projective).

\end{enumerate}
\end{thm}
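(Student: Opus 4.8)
The plan is to produce an explicit inverse for $\varPsi$ and then check that it respects the three structures. Proposition \ref{prop:tensor norms determine tensor sym cb} already supplies a candidate inverse $\varPhi$, sending a tensor norm $\|\cdot\|_\alpha$ to the tensor product of $0$-symmetric convex bodies defined by (\ref{eq: convex product given tensor norm}); I would verify $\varPhi=\varPsi^{-1}$ by a direct appeal to Minkowski's bijection between norms and $0$-symmetric convex bodies. Concretely, for $\varPhi\circ\varPsi=\mathrm{id}$ I would fix bodies $P_i\subset\mathbb{E}_i$, take $M_i=(\mathbb{E}_i,g_{P_i})$ so that $B_{M_i}=P_i$, and observe that by (\ref{eq: tensor norm given convex}) the Minkowski functional of $P_1\otimes_\alpha\cdots\otimes_\alpha P_l$ is exactly $\|\cdot\|_{\otimes_\alpha}$ on $\otimes_{i=1}^l M_i$; feeding this norm back into (\ref{eq: convex product given tensor norm}) returns the same body. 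The identity $\varPsi\circ\varPhi=\mathrm{id}$ is the mirror computation: starting from $\|\cdot\|_\alpha$, the body $B_1\otimes_\alpha\cdots\otimes_\alpha B_l$ is by construction the unit ball of $\otimes_{\|\cdot\|_\alpha,i=1}^l M_i$, whose gauge is again $\|\cdot\|_\alpha$.

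For item (i), since dualization is an involution on both sides it is enough to show that $\varPsi$ intertwines the two dualities, i.e. $\|\cdot\|_{\otimes_{\alpha'}}=(\|\cdot\|_{\otimes_\alpha})'$. I would fix $M_i=(\mathbb{E}_i,g_{P_i})$, note that $M_i^*=(\mathbb{E}_i,g_{P_i^\circ})$ has unit ball $P_i^\circ$, and start from the defining relation (\ref{eqn: dual tensor product}), $P_1\otimes_{\alpha'}\cdots\otimes_{\alpha'}P_l=(P_1^\circ\otimes_\alpha\cdots\otimes_\alpha P_l^\circ)^\circ$. Taking gauges and using $g_{C^\circ}(y)=\sup_{x\in C}\langle x,y\rangle_H$, the norm dual to $g_C$, together with $g_{P_1^\circ\otimes_\alpha\cdots\otimes_\alpha P_l^\circ}=\|\cdot\|_{\otimes_\alpha}$ on $\otimes_{i=1}^l M_i^*$, identifies $\|\cdot\|_{\otimes_{\alpha'}}$ with the dual norm in the sense of (\ref{eq: defn dual norm}).

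For item (ii) the work is to line up the operator-theoretic definitions of injective and projective tensor norms with the set-theoretic conditions (\ref{eq: def injectivity}) and (\ref{eq: def projectivity}). For a subspace $M_i\subseteq\mathbb{E}_i$ carrying the induced scalar product, the inclusion is an isometric embedding of $(M_i,g_{P_i\cap M_i})$ into $(\mathbb{E}_i,g_{P_i})$, and a normed-space embedding is isometric precisely when its unit ball is the intersection of the ambient ball with the subspace; transporting through $\varPsi$ this says $\|\cdot\|_{\otimes_\alpha}$ is injective iff the gauge of $\otimes_{i=1}^l M_i$ has unit ball $(P_1\otimes_\alpha\cdots\otimes_\alpha P_l)\cap\otimes_{i=1}^l M_i$, i.e. iff $\otimes_\alpha$ satisfies (\ref{eq: def injectivity}). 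Dually, a surjection $T_i:\mathbb{E}_i\to\mathbb{F}_i$ is a quotient map $(\mathbb{E}_i,g_{P_i})\to(\mathbb{F}_i,g_{T_iP_i})$, and quotient maps are characterized by sending the unit ball onto the unit ball, which is exactly (\ref{eq: def projectivity}); alternatively the projective equivalence can be deduced from the injective one via item (i) and Theorem \ref{thm:duality inj and proj}.

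I expect the only genuine difficulty to sit in (ii): namely, checking that the standard definitions of injective and projective tensor norms, phrased via isometric embeddings and metric surjections, correspond under Minkowski's bijection exactly to preservation of sections and of images of unit balls. The crucial small observations are that the induced-subspace norm has unit ball $P_i\cap M_i$ and that a linear surjection realizes the quotient norm with unit ball $T_iP_i$; once these are in place the dictionary is exact, and the bijection and duality parts are formal consequences of Minkowski duality and the polarity-to-dual-norm identity.
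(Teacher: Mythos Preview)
Your proposal is correct and follows essentially the same route as the paper: the bijection is established by exhibiting the map of Proposition~\ref{prop:tensor norms determine tensor sym cb} as an explicit inverse and appealing to Minkowski's correspondence, the injective half of (ii) is the section/subspace dictionary you describe, and projectivity is then obtained from injectivity via duality and Theorem~\ref{thm:duality inj and proj}. The one place where the paper is more explicit than your sketch is in (i): rather than simply declaring $M_i^*=(\mathbb{E}_i,g_{P_i^\circ})$, the paper writes down the Riesz map $T_i\colon M_i\to M_i^*$, $x\mapsto\langle\cdot,x\rangle_i$, checks that $T_i(B_{M_i}^\circ)=B_{M_i^*}$ and that $T_1\otimes\cdots\otimes T_l$ sends $w$ to $\langle\cdot,w\rangle_H$, and then invokes Proposition~\ref{prop:linear invariance of products} to conclude that this identification carries $P_1^\circ\otimes_\alpha\cdots\otimes_\alpha P_l^\circ$ to $B_{M_1^*}\otimes_\alpha\cdots\otimes_\alpha B_{M_l^*}$; this is the step that makes your identity $g_{P_1^\circ\otimes_\alpha\cdots\otimes_\alpha P_l^\circ}=\|\cdot\|_{\otimes_\alpha}$ on $\otimes_i M_i^*$ rigorous, so you should cite linear invariance (or, equivalently, the scalar-product independence in Proposition~\ref{prop:independence of scalar product}) at that point.
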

\begin{proof} By Propositions \ref{prop:independence of scalar product} and \ref{prop:tensor norms determine tensor sym cb}, to prove that $\varPsi$ is a bijection it is  only necessary  to verify that, whenever $\beta\left(\cdot\right)=\left\Vert \cdot\right\Vert _{\otimes_{\alpha}},$
then $\otimes_{\alpha}=\otimes_{\beta}.$ To that end, consider an  $l$-tuple  $P_{i}\subseteq\mathbb{E}_{i},$ $i=1,...,l,$ of $0$-symmetric convex bodies. Then, applying  (\ref{eq: convex product given tensor norm}) in the first equality and  (\ref{eq: tensor norm given convex}) in the third equality, we have
\begin{align*}
P_{1}\otimes_{\beta}\cdots\otimes_{\beta}P_{l} & :=\left\{ z\in\otimes_{i=1}^{l}\left(\mathbb{\mathbb{E}}_{i},g_{P_{i}}\right)
:\beta\left(z\right)\leq1\right\} \\
 & =\left\{ z\in\otimes_{i=1}^{l}\left(\mathbb{\mathbb{E}}_{i},g_{P_{i}}\right)
 :g_{P_{1}\otimes_{\alpha}\cdots\otimes_{\alpha}P_{l}}\left(z\right)
 \leq1\right\}
  =: P_{1}\otimes_{\alpha}\cdots\otimes_{\alpha}P_{l}.
\end{align*}
In that case,  if $\left\Vert \cdot\right\Vert _{\alpha}$ is
a tensor norm on finite dimensional normed spaces and  $\otimes_{\beta}=\otimes_{\left\Vert \cdot\right\Vert _{\alpha}},$
then $\left\Vert \cdot\right\Vert _{\otimes_{\beta}}=\left\Vert \cdot\right\Vert _{\alpha}.$

To prove $i)$, let $\otimes_{\alpha}$  be     a tensor product of $0$-symmetric convex bodies and let  $\beta\left(\cdot\right):=\left\Vert \cdot\right\Vert _{\otimes_{\alpha}}$.   To see that
$\beta^{\prime}\left(\cdot\right)=\left\Vert \cdot\right\Vert _{\otimes_{\alpha^{\prime}}}$, we have to check that given  $l$  finite dimensional normed spaces  $M_{i}$,
\[
\beta^{\prime}\left(z\right)=g_{B_{M_{1}}
\otimes_{\alpha^{\prime}}\cdots\otimes_{\alpha^{\prime}}B_{M_{l}}}
\left(z\right)\text{ for every }z\in\otimes_{i=1}^{l}M_{i}.
\]
To that end, let us  fix  a scalar product
   $\left\langle \cdot,\cdot\right\rangle _{i}$ on each $M_i$ (we checked in Proposition \ref{prop:independence of scalar product} the independence on  the selected scalar products).

Let $T_{i}:M_{i}\rightarrow M_{i}^{*}$ be defined as  $T_{i}(x):=\left\langle \cdot,x\right\rangle _{i}.$ Then, $T_{i}\left(B_{M_{i}}^{\circ}\right)=B_{M_{i}^{*}},$ and
$T_{1}\otimes\cdots\otimes T_{l}:\otimes_{i=1}^{l}M_{i}
\rightarrow\otimes_{i=1}^{l}M_{i}^{*}$  is the map sending $w\mapsto\left\langle \cdot,w\right\rangle _{H}.$
Thus, by Proposition \ref{prop:linear invariance of products}, we have:
\begin{align*}
T_{1}\otimes\cdots\otimes T_{l}\left(B_{M_{1}}^{\circ}\otimes_{\alpha}\cdots\otimes_{\alpha}B_{M_{l}}^{\circ}\right) & =T_{1}\left(B_{M_{1}}^{\circ}\right)\otimes_{\alpha}\cdots\otimes_{\alpha}T_{l}\left(B_{M_{l}}^{\circ}\right)\\
 & =B_{M_{1}^{*}}\otimes_{\alpha}\cdots\otimes_{\alpha}B_{M_{l}^{*}}.
\end{align*}
Then, for every
$\varphi\in\otimes_{i=1}^{l}M_{i}^{*}$,  it holds that $g_{B_{M_{1}^{*}}\otimes_{\alpha}\cdots
\otimes_{\alpha}B_{M_{l}^{*}}}\left(\varphi\right)\leq 1$ if and only if
$
T_{1}^{-1}\otimes\cdots\otimes T_{l}^{-1}\left(\varphi\right)\in B_{M_{1}}^{\circ}\otimes_{\alpha}\cdots\otimes_{\alpha}B_{M_{l}}^{\circ}.
$
Since
$
\left\langle z,T_{1}^{-1}\otimes\cdots\otimes T_{l}^{-1}\left(\varphi\right)\right\rangle _{H}=\varphi\left(z\right),
$ we obtain
\begin{multline*}
  \beta^{\prime}\left(z\right):=\sup\left\{ \left|\varphi\left(z\right)\right|:g_{B_{M_{1}^{*}}
\otimes_{\alpha}\cdots\otimes_{\alpha}B_{M_{l}^{*}}}
\left(\varphi\right)\leq1\right\}=\\
 \sup\left\{ \left|\langle z, w\rangle_H\right|:g_{B_{M_{1}^{\circ}}
\otimes_{\alpha}\cdots\otimes_{\alpha}B_{M_{l}^{\circ}}}
\left(w\right)\leq1\right\} =\\
g_{({B_{M_{1}^{\circ}}
\otimes_{\alpha}\cdots\otimes_{\alpha}B_{M_{l}^{\circ}}})^{\circ}}(z)=
g_{B_{M_{1}}
\otimes_{\alpha^{\prime}}\cdots\otimes_{\alpha^{\prime}}B_{M_{l}}}
\left(z\right).
\end{multline*}
We have already proved that $\varPsi$ preserves duality. With this  and the bijectivity of $\varPsi$, it follows that
 $\varPsi^{-1}$ preserves duality, too.

To prove $ii)$ let us suppose that $\otimes_{\alpha}$ is injective. Consider  any finite dimensional normed spaces $M_{i}$ and any fixed scalar product on $M_{i}$, $i=1,...,l$,  $\left\langle \cdot,\cdot\right\rangle _{i}$.

 We consider, on every subspace $N_{i}\subseteq M_{i}$,
 the scalar product dermined by the restriction of $\left\langle \cdot,\cdot\right\rangle _{i}$
to $M_{i}.$
The injectivity of  $\otimes_{\alpha}$ implies
\[
\left(B_{M_{1}}\cap N_{1}\right)\otimes_{\alpha}\cdots\otimes_{\alpha}\left(B_{M_{l}}\cap N_{l}\right)=B_{M_{1}}\otimes_{\alpha}\cdots
\otimes_{\alpha}B_{M_{l}}\cap\otimes_{i=1}^{l}N_{i}.
\]
Since  $B_{M_{i}}\cap N_{i}=B_{N_{i}}$, we get that
\[
g_{B_{N_{1}}\otimes_{\alpha}\cdots\otimes_{\alpha}B_{N_{l}}}\left(z\right)=g_{B_{M_{1}}\otimes_{\alpha}\cdots\otimes_{\alpha}B_{M_{l}}}\left(z\right)\text{ for }z\in\otimes_{i=1}^{l}N_{i}.
\]
This already shows that
$\left\Vert \cdot\right\Vert _{\otimes_{\alpha}}$ is injective. Since all the steps of the proof are reversible, it follows that the reciprocal is also true.

The projective case follows using $i)$, namely,  that the bijection $\varPsi$  preserves duality, along with the dual relation between injective and projective tensor products (Proposition \ref{prop:property dual product} and \cite[Proposition 7.5]{Ryan2013}).
\end{proof}

Given a tensor norm $\alpha$ on normed spaces, its injective associate tensor norm  $/\alpha\setminus$ is the biggest injective tensor norm $/\alpha\setminus\leq\alpha$. Similarly, its projective associate tensor norm $\setminus\alpha/$ is the smallest projective tensor norm $\setminus\alpha/\geq\alpha.$ See \cite{defantfloret,Ryan2013} for a deeper discussion about these tensor norms. With this notation, we have:

\begin{prop}
\label{prop: injectiva y projectiva a partir del teorema principal}
Let $\otimes_\alpha$ be a tensor product of $0$-symmetric convex bodies and let $\left\Vert\cdot\right\Vert _{\otimes_{\alpha}}$ be its corresponding tensor norm. Then, the tensor norms on finite dimensions that correspond (under the bijection of Theorem \ref{thm: main theorem the bijection}) to the products $\otimes_{\alpha^{inj}},$ $\otimes_{\alpha^{proj}}$ are the injective and projective associated to $\left\Vert\cdot\right\Vert _{\otimes_{\alpha}}$ respectively.
\end{prop}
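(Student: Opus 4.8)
The plan is to leverage the bijection $\varPsi$ from Theorem \ref{thm: main theorem the bijection} together with its established preservation properties, rather than re-deriving the correspondence between hulls from scratch. The key observation is that $\varPsi$ is an order isomorphism: since $\|\cdot\|_{\otimes_\alpha}$ is built as the gauge of $B_1\otimes_\alpha\cdots\otimes_\alpha B_l$, the inclusion $\otimes_\alpha\subseteq\otimes_\beta$ of tensor products of $0$-symmetric convex bodies is equivalent to the pointwise inequality $\|\cdot\|_{\otimes_\beta}\leq\|\cdot\|_{\otimes_\alpha}$ of the corresponding tensor norms (larger body means smaller gauge). First I would record this order-reversing-into-order-preserving dictionary precisely, noting that $\varPsi$ maps $\otimes_\pi,\otimes_\epsilon$ to the largest and smallest reasonable crossnorms $\pi,\epsilon$, matching the extremes on both sides.

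Next I would treat the injective case directly. By Proposition \ref{prop:closest injective product}, $\otimes_{\alpha^{inj}}$ is by definition the smallest injective tensor product of convex bodies containing $\otimes_\alpha$. Under $\varPsi$, Theorem \ref{thm: main theorem the bijection}(ii) tells us that $\varPsi$ carries injective tensor products of convex bodies bijectively onto injective tensor norms. Combining this with the order-dictionary above, the image $\|\cdot\|_{\otimes_{\alpha^{inj}}}$ must be the \emph{largest} injective tensor norm that is $\leq\|\cdot\|_{\otimes_\alpha}$ (the inclusion $\otimes_\alpha\subseteq\otimes_{\alpha^{inj}}$ flips to $\|\cdot\|_{\otimes_{\alpha^{inj}}}\leq\|\cdot\|_{\otimes_\alpha}$, and ``smallest such product'' flips to ``largest such norm''). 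This is exactly the definition of the injective associate $/\|\cdot\|_{\otimes_\alpha}\setminus$.

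For the projective case I would avoid repeating the argument and instead exploit the duality already proven. Recall $\otimes_{\alpha^{proj}}$ was defined in (\ref{eq: projective hull}) as $\otimes_{((\alpha')^{inj})'}$, and that on the tensor-norm side the projective associate satisfies the analogous dual formula $\setminus\alpha/=(/\alpha'\setminus)'$ (see \cite{defantfloret,Ryan2013}). Since $\varPsi$ preserves duality by Theorem \ref{thm: main theorem the bijection}(i) and we have just shown it sends the injective hull to the injective associate, we get
\[
\varPsi(\otimes_{\alpha^{proj}})=\varPsi(\otimes_{((\alpha')^{inj})'})=\bigl(\varPsi(\otimes_{(\alpha')^{inj}})\bigr)'=\bigl(/\|\cdot\|_{\otimes_{\alpha'}}\setminus\bigr)'=\setminus\|\cdot\|_{\otimes_\alpha}/,
\]
using in the last step that $\varPsi(\otimes_{\alpha'})=\|\cdot\|_{\otimes_{\alpha'}}=(\|\cdot\|_{\otimes_\alpha})'$.

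The main obstacle I anticipate is purely bookkeeping: making the order-dictionary airtight, specifically verifying that $\varPsi$ and $\varPsi^{-1}$ are both order-preserving in the sense relating set inclusion of products to reverse inequality of norms, so that ``smallest/largest'' on one side maps correctly to ``largest/smallest'' on the other. One must also confirm that the injective (resp. projective) associate is genuinely characterized by the universal extremal property (largest injective norm below, smallest projective norm above) and not merely by some construction, so that the image of a universally-defined hull is identified with it. None of these steps involves hard analysis; the entire proof reduces to transporting the order and duality structure through the established bijection.
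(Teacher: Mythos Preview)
Your proposal is correct and follows essentially the same route as the paper: use Theorem \ref{thm: main theorem the bijection}(ii) plus the order-reversing dictionary between inclusions of products and inequalities of norms to identify $\|\cdot\|_{\otimes_{\alpha^{inj}}}$ as the largest injective tensor norm below $\|\cdot\|_{\otimes_\alpha}$, and then derive the projective case from the injective one via the definition $\otimes_{\alpha^{proj}}=\otimes_{((\alpha')^{inj})'}$ and the duality in Theorem \ref{thm: main theorem the bijection}(i). The paper's proof is terser but logically identical; your explicit order-dictionary bookkeeping simply makes visible what the paper leaves implicit.
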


\begin{proof}
From (ii) of Theorem \ref{thm: main theorem the bijection}, it follows that $\left\Vert\cdot\right\Vert _{\otimes_{\alpha^{inj}}}$  is an injective tensor norm such that $\left\Vert\cdot\right\Vert _{\otimes_{\alpha^{inj}}}\leq\left\Vert\cdot\right\Vert _{\otimes_{\alpha}}.$ Thus, by the definition of $\otimes_{\alpha^{inj}}$ and Theorem \ref{thm: main theorem the bijection}, it has to be the biggest one below $\left\Vert\cdot\right\Vert _{\otimes_{\alpha}}.$ The result for the projective associate tensor norm and $\left\Vert\cdot\right\Vert _{\otimes_{\alpha^{proj}}}$ follows from the previous one, the definition of $\otimes_{\alpha^{proj}}$ and the duality exposed in (i) of Theorem \ref{thm: main theorem the bijection}.
\end{proof}

\subsection{Explicit representation of the  biggest injective and  the smallest projective tensor products. }\label{subs: explicit hulls}

Every separable Banach space can be isometrically embedded into $\ell_{\infty}$ and is a quotient of $\ell_1$  (\cite[pp. 19]{JOHNSON20011} and  \cite[pp. 17]{JOHNSON20011}). With  these properties, we will obtain   explicit representations of $\otimes_{\pi^{inj}}$ and $\otimes_{\epsilon^{proj}}$, respectively.

\begin{lem}
\label{lem: indep del enaje linfity}
For each $i=1,\ldots,l$, the following statements hold:
\begin{enumerate}
\item Let  $j_i:(\mathbb{E}_{i}, g_{P_i})\hookrightarrow \ell_\infty$ and $k_i:(\mathbb{E}_{i}, g_{P_i})\hookrightarrow \ell_\infty$ be isometric embeddings. Then,
$$
(\otimes_{i=1}^l k_i)^{-1}(B_{\ell_{\infty}\hat{\otimes}_{\pi}\cdots \hat{\otimes}_{\pi} \ell_{\infty}}\cap\otimes_{i=1}^l {k_i(\mathbb{E}_{i})})=(\otimes_{i=1}^l j_i)^{-1} (B_{\ell_{\infty}\hat{\otimes}_{\pi}\cdots \hat{\otimes}_{\pi} \ell_{\infty}}\cap\otimes_{i=1}^lj_i(\mathbb{E}_{i})).
$$
\item Let $q_i:\ell_1\rightarrow(\mathbb{E}_{i}, g_{P_i})$ and $h_i:\ell_1\rightarrow(\mathbb{E}_{i}, g_{P_i})$ be quotient maps. Then,
\[
(q_1\otimes \cdots\otimes q_l)(B_{\ell_1\hat{\otimes}_{\epsilon}\cdots \hat{\otimes}_{\epsilon} \ell_{1}})=(h_1\otimes \cdots\otimes h_l)(B_{\ell_1\hat{\otimes}_{\epsilon}\cdots \hat{\otimes}_{\epsilon} \ell_{1}})
\]
\end{enumerate}
\end{lem}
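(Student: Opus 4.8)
The plan is to treat both parts by the same device: to move the relevant unit ball from one representation to the other along norm-non-increasing operators that intertwine the two families of maps. In part (1) these operators are produced by the extension property of $\ell_{\infty}$, in part (2) by the lifting property of $\ell_{1}$; in each case the essential ingredient is the uniform (metric mapping) property of $\otimes_{\pi}$ and $\otimes_{\epsilon}$ (the Banach-space form of Proposition \ref{prop: uniform projective injective product}, see also \cite{defantfloret,Ryan2013}), which guarantees that a tensor product of contractions is again a contraction.

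For part (1) I would argue as follows. Since $\ell_{\infty}$ is $1$-injective, the linear isometry $k_{i}\circ j_{i}^{-1}:j_{i}(\mathbb{E}_{i})\to\ell_{\infty}$ extends, coordinatewise by Hahn--Banach, to an operator $R_{i}:\ell_{\infty}\to\ell_{\infty}$ with $\|R_{i}\|\le 1$ and $R_{i}\circ j_{i}=k_{i}$; symmetrically one obtains $S_{i}:\ell_{\infty}\to\ell_{\infty}$ with $\|S_{i}\|\le 1$ and $S_{i}\circ k_{i}=j_{i}$. Writing $B:=B_{\ell_{\infty}\hat{\otimes}_{\pi}\cdots\hat{\otimes}_{\pi}\ell_{\infty}}$, the metric mapping property of $\hat{\otimes}_{\pi}$ gives $\|\otimes_{i=1}^{l}R_{i}\|\le 1$ and $\|\otimes_{i=1}^{l}S_{i}\|\le 1$, so both operators carry $B$ into itself. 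For $u\in\otimes_{i=1}^{l}\mathbb{E}_{i}$ the identities $(\otimes_{i=1}^{l}k_{i})(u)=(\otimes_{i=1}^{l}R_{i})((\otimes_{i=1}^{l}j_{i})(u))$ and $(\otimes_{i=1}^{l}j_{i})(u)=(\otimes_{i=1}^{l}S_{i})((\otimes_{i=1}^{l}k_{i})(u))$ then show that $(\otimes_{i=1}^{l}j_{i})(u)\in B$ if and only if $(\otimes_{i=1}^{l}k_{i})(u)\in B$. Since $(\otimes_{i=1}^{l}k_{i})^{-1}(B\cap\otimes_{i=1}^{l}k_{i}(\mathbb{E}_{i}))=\{u:(\otimes_{i=1}^{l}k_{i})(u)\in B\}$, and likewise for $j_{i}$, this is precisely the asserted equality. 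I expect this part to be exact, with no limiting argument, because the extension furnished by the injectivity of $\ell_{\infty}$ preserves the norm on the nose.

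Part (2) I would handle dually. Using that $\ell_{1}$ has the metric lifting property, I lift $h_{i}$ through the metric surjection $q_{i}$: for each $\delta>0$ there is $R_{i}:\ell_{1}\to\ell_{1}$ with $q_{i}\circ R_{i}=h_{i}$ and $\|R_{i}\|\le 1+\delta$, and symmetrically $S_{i}$ with $h_{i}\circ S_{i}=q_{i}$. By the metric mapping property of $\hat{\otimes}_{\epsilon}$, the operators $\otimes_{i=1}^{l}R_{i}$ and $\otimes_{i=1}^{l}S_{i}$ have norm at most $(1+\delta)^{l}$ on $\ell_{1}\hat{\otimes}_{\epsilon}\cdots\hat{\otimes}_{\epsilon}\ell_{1}$, and the relations $\otimes_{i=1}^{l}h_{i}=(\otimes_{i=1}^{l}q_{i})(\otimes_{i=1}^{l}R_{i})$ and $\otimes_{i=1}^{l}q_{i}=(\otimes_{i=1}^{l}h_{i})(\otimes_{i=1}^{l}S_{i})$ yield $(\otimes_{i=1}^{l}h_{i})(B)\subseteq(1+\delta)^{l}(\otimes_{i=1}^{l}q_{i})(B)$ and the reverse inclusion, for every $\delta>0$, where now $B:=B_{\ell_{1}\hat{\otimes}_{\epsilon}\cdots\hat{\otimes}_{\epsilon}\ell_{1}}$. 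Alternatively, I would reduce (2) to (1) by polarity: the transposes $q_{i}^{t}:M_{i}^{*}\to\ell_{\infty}$ are isometric embeddings, and, since $\pi$ and $\epsilon$ are mutually dual tensor norms, computing $((\otimes_{i=1}^{l}q_{i})(B))^{\circ}$ identifies it with the $\hat{\otimes}_{\pi}$-preimage $(\otimes_{i=1}^{l}q_{i}^{t})^{-1}(B_{\ell_{\infty}\hat{\otimes}_{\pi}\cdots\hat{\otimes}_{\pi}\ell_{\infty}})$ treated in part (1); applying part (1) to $q_{i}^{t}$ and $h_{i}^{t}$ gives $((\otimes_{i=1}^{l}q_{i})(B))^{\circ}=((\otimes_{i=1}^{l}h_{i})(B))^{\circ}$.

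The main obstacle is exactly the asymmetry between the two parts, and this is where I would concentrate the care. Because $\ell_{\infty}$ is $1$-injective, the intertwiners in (1) can be chosen of norm exactly $\le 1$, so (1) is an honest equality of sets. By contrast $\ell_{1}$ is only $(1+\delta)$-projective: the infimum defining the norm of a lifting need not be attained, so both the lifting argument and the polarity argument yield equality only after passing to the bipolar, namely $\overline{(\otimes_{i=1}^{l}q_{i})(B)}=\overline{(\otimes_{i=1}^{l}h_{i})(B)}$, equivalently that the two image sets have the same gauge and hence generate the same $0$-symmetric convex body (by the bipolar theorem \cite[Theorem 1.6.2]{Schneider1993}). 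Thus the content I would actually establish is the equality of the convex bodies determined by the two sides; coincidence of the raw images requires the relevant lifting infima to be attained, and it is the closed convex hull that is the well-defined invariant entering the explicit description of $\otimes_{\epsilon^{proj}}$ in the following subsection.
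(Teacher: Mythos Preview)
Your approach is essentially the paper's: part (1) via the $1$-injectivity of $\ell_\infty$ to produce norm-one intertwiners and the metric mapping property of $\hat{\otimes}_\pi$, part (2) via the lifting property of $\ell_1$ and the metric mapping property of $\hat{\otimes}_\epsilon$. Your caution about closures in part (2) is well taken and in fact more careful than the original: the paper obtains the same inclusion with factor $(1+\rho)^l$ and then passes to $\rho=0$ without comment, tacitly using that $(\otimes_i h_i)(B)$ is closed; your alternative polarity reduction is not in the paper.
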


\begin{proof} (1). We will use  the injectivity of the space $\ell_{\infty}$ (\cite[Ex. 1.7]{defantfloret}) to extend  linear  mappings defined on subspaces,   to  linear mappings preserving the norms.
The isometries $j_i\circ (k_i)^{-1}:k_i(\mathbb{E}_{i})\rightarrow j_i(\mathbb{E}_{i})\subset \ell_{\infty}$ can, thus, be extended to
 norm-one mappings $\mu_i:\ell_{\infty}\rightarrow \ell_{\infty}$.  The restriction of $\otimes_{i=1}^n\mu_i:{\ell_{\infty}\hat{\otimes}_{\pi}\cdots \hat{\otimes}_{\pi} \ell_{\infty}}\rightarrow {\ell_{\infty}\hat{\otimes}_{\pi}\cdots \hat{\otimes}_{\pi} \ell_{\infty}}$ satisfies
 $$(\otimes_{i=1}^l (j_i\circ (k_i)^{-1})(B_{\ell_{\infty}\hat{\otimes}_{\pi}\cdots \hat{\otimes}_{\pi} \ell_{\infty}}\cap\otimes_{i=1}^l {k_i(\mathbb{E}_{i})})\subseteq B_{\ell_{\infty}\hat{\otimes}_{\pi}\cdots \hat{\otimes}_{\pi} \ell_{\infty}}\cap\otimes_{i=1}^lj_i(\mathbb{E}_{i}).
  $$
   The same argument used with the isometries $k_i\circ (j_i)^{-1}$ says that this contention is, indeed, an equality:
   $$(\otimes_{i=1}^l j_i)((\otimes_{i=1}^l k_i)^{-1}(B_{\ell_{\infty}\hat{\otimes}_{\pi}\cdots \hat{\otimes}_{\pi} \ell_{\infty}}\cap\otimes_{i=1}^l {k_i(\mathbb{E}_{i})}))= B_{\ell_{\infty}\hat{\otimes}_{\pi}\cdots \hat{\otimes}_{\pi} \ell_{\infty}}\cap\otimes_{i=1}^lj_i(\mathbb{E}_{i}).
  $$
Hence, applying $(j_1\otimes\cdots\otimes j_l)^{-1}$ to the last equality, we obtain the desired result.

(2). This part follows from the lifting property of $\ell_1$ (\cite[3.12]{defantfloret}). Indeed, let $\rho>0$ and $\tilde{q}_i:\ell_1\rightarrow\ell_1$ be such that $q_i=h_i\circ\tilde{q}_i$ and $\Vert\tilde{q}_i\Vert\leq(1+\rho)\Vert q_i\Vert.$ Then, since each $q_i$ has norm one, we have that $\tilde{q}_1\otimes\cdots\otimes\tilde{q}_i(B_{\ell_1\hat{\otimes}_{\epsilon}\cdots \hat{\otimes}_{\epsilon} \ell_{1}})\subseteq(1+\rho)^l(B_{\ell_1\hat{\otimes}_{\epsilon}\cdots \hat{\otimes}_{\epsilon} \ell_{1}}).$ Applying $h_1\otimes\cdots\otimes h_l$  to the previous inclusion, we have:
\[
q_1\otimes\cdots\otimes q_l(B_{\ell_1\hat{\otimes}_{\epsilon}\cdots \hat{\otimes}_{\epsilon} \ell_{1}})\subseteq(1+\rho)^{l}h_1\otimes\cdots\otimes h_l(B_{\ell_1\hat{\otimes}_{\epsilon}\cdots \hat{\otimes}_{\epsilon} \ell_{1}}).
\]
Thus, $q_1\otimes\cdots\otimes q_l(B_{\ell_1\hat{\otimes}_{\epsilon}\cdots \hat{\otimes}_{\epsilon} \ell_{1}})\subseteq h_1\otimes\cdots\otimes h_l(B_{\ell_1\hat{\otimes}_{\epsilon}\cdots \hat{\otimes}_{\epsilon} \ell_{1}}).$

The lifting property used for the quotient maps $h_i$ shows the other inclusion. This yields to the desired equality.
\end{proof}

\begin{thm}
\label{thm: la menor injectiva}
For    $i=1,...,l$, let  $P_{i}\subset\mathbb{E}_{i}$ be  $0$-symmetric convex bodies.
\begin{enumerate}
  \item  For any isometric embeddings
$j_i:(\mathbb{E}_{i}, g_{P_i})\hookrightarrow \ell_\infty,$
\begin{equation*}\label{eq: smaller injective}
P_{1}\otimes_{\pi^{inj}}\cdots
\otimes_{\pi^{inj}}P_{l}=\left(j_1\otimes \cdots\otimes j_l\right)^{-1}\left(B_{\ell_{\infty}\hat{\otimes}_{\pi}\cdots \hat{\otimes}_{\pi} \ell_{\infty}}\cap\otimes_{i=1}^{l}j_i(\mathbb{\mathbb{E}}_{i})\right).
\end{equation*}
 \item For    any  quotient mappings
$q_i:\ell_1\rightarrow (\mathbb{E}_{i}, g_{P_i}),$
\begin{equation*}\label{eq: smaller injective}
P_{1}\otimes_{\epsilon^{proj}}\cdots
\otimes_{\epsilon^{proj}}P_{l}=(q_1\otimes \cdots\otimes q_l)(B_{\ell_1\hat{\otimes}_{\epsilon}\cdots \hat{\otimes}_{\epsilon} \ell_{1}}).
\end{equation*}
\end{enumerate}
\end{thm}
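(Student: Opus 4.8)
The plan is to prove (1) directly, showing that its right-hand side defines an injective tensor product of convex bodies sitting above $\otimes_{\pi}$, and that it is the smallest such; part (2) will then follow by polarity. Throughout, fix isometric embeddings $j_i:(\mathbb{E}_i,g_{P_i})\hookrightarrow\ell_\infty$ and write $R:=(j_1\otimes\cdots\otimes j_l)^{-1}(B_{\ell_{\infty}\hat{\otimes}_{\pi}\cdots\hat{\otimes}_{\pi}\ell_{\infty}}\cap\otimes_{i=1}^l j_i(\mathbb{E}_i))$. By Lemma \ref{lem: indep del enaje linfity}(1) the set $R$ does not depend on the chosen $j_i$, and by construction its gauge is $g_R(z)=\pi\bigl((j_1\otimes\cdots\otimes j_l)z\bigr)$, the $\pi$-norm of the image of $z$ in $\ell_{\infty}\hat{\otimes}_{\pi}\cdots\hat{\otimes}_{\pi}\ell_{\infty}$.

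First I would verify that $R$ is an injective tensor product of convex bodies above $\otimes_{\pi}$. Condition (1) of Definition \ref{defn: tensor prod of cb}: since each $j_i$ is norm-one, the metric mapping property of the projective norm (Proposition \ref{prop: uniform projective injective product}) gives $g_R\le\pi$, hence $P_1\otimes_{\pi}\cdots\otimes_{\pi}P_l\subseteq R$; and since $\epsilon$ is preserved by isometric embeddings (injectivity of $\otimes_{\epsilon}$, Proposition \ref{prop: pi proj eps inj}) and $\epsilon\le\pi$, one has $\epsilon(z)=\epsilon((\otimes j_i)z)\le\pi((\otimes j_i)z)=g_R(z)$, so $R\subseteq P_1\otimes_{\epsilon}\cdots\otimes_{\epsilon}P_l$. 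For the uniform property, given $T_i(P_i)\subseteq Q_i$ and embeddings $j_i':(\mathbb{F}_i,g_{Q_i})\hookrightarrow\ell_\infty$, the metric injectivity of $\ell_\infty$ extends $j_i'\circ T_i\circ j_i^{-1}$ to norm-one maps $\mu_i:\ell_\infty\to\ell_\infty$ with $\mu_i\circ j_i=j_i'\circ T_i$, whence $\pi((\otimes j_i')(\otimes T_i)z)=\pi((\otimes\mu_i)(\otimes j_i)z)\le\pi((\otimes j_i)z)$, i.e. $g_{R(Q)}((\otimes T_i)z)\le g_{R(P)}(z)$. Injectivity of $R$ is then immediate: for subspaces $M_i\subseteq\mathbb{E}_i$ the restrictions $j_i|_{M_i}$ are isometric embeddings of $(M_i,g_{P_i\cap M_i})$, and since $(\otimes j_i|_{M_i})z=(\otimes j_i)z$ on $\otimes M_i$, the very definition of $R$ (used with the $j_i|_{M_i}$, legitimate by Lemma \ref{lem: indep del enaje linfity}(1)) gives $(P_1\cap M_1)\otimes\cdots\otimes(P_l\cap M_l)=R\cap\otimes_i M_i$. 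Proposition \ref{prop:closest injective product} now yields $\otimes_{\pi^{inj}}\subseteq R$.

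The reverse inclusion is the crux. Let $\otimes_{\beta}$ be \emph{any} injective tensor product with $\otimes_{\pi}\subseteq\otimes_{\beta}$; I must show $R\subseteq\otimes_{\beta}$. Through the bijection of Theorem \ref{thm: main theorem the bijection}, $\beta:=\|\cdot\|_{\otimes_{\beta}}$ is an injective tensor norm defined on all Banach spaces with $\beta\le\pi$. Its injectivity makes $j_i:M_i\hookrightarrow\ell_\infty$ induce an isometry $\otimes_{\beta}M_i\to\otimes_{\beta}\ell_\infty$, so $g_{\otimes_{\beta}}(z)=\beta((\otimes j_i)z;\ell_\infty)\le\pi((\otimes j_i)z)=g_R(z)$, that is $R\subseteq\otimes_{\beta}$. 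Intersecting over all such $\beta$ gives $R\subseteq\otimes_{\pi^{inj}}$, hence $R=\otimes_{\pi^{inj}}$. The delicate point, and the main obstacle, is exactly that this final comparison takes place on the infinite-dimensional space $\ell_\infty$: it is legitimate only because $\beta$ is a genuine tensor norm (defined on all Banach spaces, Theorem \ref{thm: main theorem the bijection}) whose injectivity refers to arbitrary isometric embeddings. This is also why the statement uses embeddings into $\ell_\infty$ rather than some finite $\ell_\infty^N$: a general $0$-symmetric body is not a polytope, so no isometric embedding of $(\mathbb{E}_i,g_{P_i})$ into a finite-dimensional $\ell_\infty^N$ need exist.

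Finally, part (2) follows by duality. By (\ref{eq: projective hull}) and $\epsilon'=\pi$ (Proposition \ref{prop:property dual product}(1)) we have $\otimes_{\epsilon^{proj}}=\otimes_{(\pi^{inj})'}$, so by (\ref{eqn: dual tensor product}) $P_1\otimes_{\epsilon^{proj}}\cdots\otimes_{\epsilon^{proj}}P_l=(P_1^{\circ}\otimes_{\pi^{inj}}\cdots\otimes_{\pi^{inj}}P_l^{\circ})^{\circ}$. A quotient map $q_i:\ell_1\to(\mathbb{E}_i,g_{P_i})$ dualizes to an isometric embedding $q_i^{*}:(\mathbb{E}_i,g_{P_i^{\circ}})\hookrightarrow\ell_1^{*}=\ell_\infty$, so part (1) applied to the bodies $P_i^{\circ}$ with the embeddings $q_i^{*}$ writes $P_1^{\circ}\otimes_{\pi^{inj}}\cdots\otimes_{\pi^{inj}}P_l^{\circ}$ as $(\otimes q_i^{*})^{-1}(B_{\ell_{\infty}\hat{\otimes}_{\pi}\cdots\hat{\otimes}_{\pi}\ell_{\infty}}\cap\otimes_i q_i^{*}(\mathbb{E}_i))$. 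Taking polars, and using that $B_{\ell_1\hat{\otimes}_{\epsilon}\cdots\hat{\otimes}_{\epsilon}\ell_1}$ and $B_{\ell_{\infty}\hat{\otimes}_{\pi}\cdots\hat{\otimes}_{\pi}\ell_{\infty}}$ are mutually polar (the norms $\pi,\epsilon$ being dual and $\ell_1^{*}=\ell_\infty$), together with the transpose--polar identity (\ref{eq:transpose of polar TA}) to turn the polar of a preimage-section into the image $(\otimes q_i)(\,\cdot\,)$, yields the asserted formula; its independence of the chosen $q_i$ is Lemma \ref{lem: indep del enaje linfity}(2).
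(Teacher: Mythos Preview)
Your approach differs from the paper's. The paper proves (1) by invoking Theorem~\ref{thm: main theorem the bijection} and Proposition~\ref{prop: injectiva y projectiva a partir del teorema principal} to identify $\otimes_{\pi^{inj}}$ with the injective associate $/\pi\setminus$ of the projective norm, and then simply cites the known explicit formula for $/\pi\setminus$ from \cite[20.7]{defantfloret}; part (2) is handled in the same way via \cite[20.6]{defantfloret}, not by dualizing (1). Your route is more hands-on: you verify directly that $R$ defines an injective tensor product of convex bodies containing $\otimes_\pi$ (this part is correct and well argued), and then try to prove minimality by comparing $R$ with an arbitrary injective $\otimes_\beta$.

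The gap is precisely in that minimality step, and you have not actually resolved the ``delicate point'' you flag. You assert that, via Theorem~\ref{thm: main theorem the bijection}, $\beta$ is an injective tensor norm \emph{defined on all Banach spaces}, and then apply its injectivity to the infinite-dimensional embedding $(\mathbb{E}_i,g_{P_i})\hookrightarrow\ell_\infty$. But Theorem~\ref{thm: main theorem the bijection} only produces a tensor norm on \emph{finite-dimensional} spaces, and Definition~\ref{defn: injective and proj tpb} (hence Theorem~\ref{thm: main theorem the bijection}(ii)) speaks only of finite-dimensional sections. What you need is the extension of a tensor norm on f.d.\ spaces to a finitely generated tensor norm on all Banach spaces, together with the fact that injectivity passes to that extension; this is exactly the content of \cite[20.4--20.7]{defantfloret} that the paper quotes directly. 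So once your argument is made rigorous it leans on the same external input as the paper's proof---it just places the citation elsewhere. A genuinely self-contained alternative would be to approximate each $P_i$ by polytopes (which do embed isometrically into some $\ell_\infty^{N}$), work in finite dimensions throughout, and pass to the limit using Proposition~\ref{prop:continuity wrt BM distance}; you do not do this.

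Your duality derivation of (2) is reasonable in outline, but the claim that $B_{\ell_1\hat\otimes_\epsilon\cdots\hat\otimes_\epsilon\ell_1}$ and $B_{\ell_\infty\hat\otimes_\pi\cdots\hat\otimes_\pi\ell_\infty}$ are ``mutually polar'' is not literally correct (the dual of $\ell_1\hat\otimes_\epsilon\cdots\hat\otimes_\epsilon\ell_1$ is a space of integral forms, not $\ell_\infty\hat\otimes_\pi\cdots\hat\otimes_\pi\ell_\infty$). The identity you actually need can be obtained by computing the dual of the gauge $z\mapsto\pi\bigl((\otimes q_i^{*})z\bigr)$ directly on the finite-dimensional space $\otimes_i\mathbb{E}_i$, without invoking any infinite-dimensional polarity.
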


\begin{proof}
(1). Since $\otimes_\pi$ corresponds, under the bijection given in Theorem \ref{thm: main theorem the bijection}, to the projective norm  $\pi(\cdot),$ then, by Proposition \ref{prop: injectiva y projectiva a partir del teorema principal}, $\otimes_{\pi^{inj}}$ corresponds to the biggest injective tensor norm $/\pi\setminus.$ Hence, for each tuple $P_i\subset\mathbb{E}_i,$ $i=1,\ldots,l,$
$P_1\otimes_{\pi^{inj}}\cdots\otimes_{\pi^{inj}}P_l=B_{\otimes_{/\pi\setminus,i=1}^l\left(\mathbb{E}_i,g_{P_i}\right)}.$

If $j_i:\left(\mathbb{E}_i,g_{P_i}\right)\hookrightarrow\ell_\infty$ are are isometric emmbedings as in  \cite[20.7]{defantfloret}, then
$/\pi\setminus(u)=\pi(j_1\otimes\cdots\otimes j_l(u)),$
for every $u\in\otimes_{i=1}^l\left(\mathbb{E}_i,g_{P_i}\right).$ So,
\[
P_{1}\otimes_{\pi^{inj}}\cdots
\otimes_{\pi^{inj}}P_{l}=(j_1\otimes \cdots\otimes j_l)^{-1}\big\{(B_{\ell_{\infty}\hat{\otimes}_{\pi}\cdots \hat{\otimes}_{\pi} \ell_{\infty}})\cap (j_1\otimes \cdots\otimes j_l)(\otimes_{i=1}^{l}\mathbb{\mathbb{E}}_{i})\big\}.
\]
By Lemma \ref{lem: indep del enaje linfity}, the latter does not depend  on the election of the embeddings $j_i$.

To prove $(2),$ we use that every separable Banach space is a quotient of $\ell_1.$  Also, by \cite[20.6]{defantfloret}, the smallest projective tensor norm $\setminus\epsilon/$ is such that $B_{\otimes_{\setminus\epsilon/}\left(\mathbb{E}_i,g_{P_i}\right)}=(q_1\otimes \cdots\otimes q_l)\left(B_{\ell_1\hat{\otimes}_{\epsilon}\cdots \hat{\otimes}_{\epsilon} \ell_{1}}\right),$ where $q_i:\ell_1\rightarrow (\mathbb{E}_{i}, g_{P_i})$ are the quotient mappings of \cite[20.6]{defantfloret}.  Thus, by applying Proposition \ref{prop: injectiva y projectiva a partir del teorema principal} and \ref{lem: indep del enaje linfity}, we obtain the desired result.
\end{proof}

\begin{cor}
\label{coro: en linfty coinciden}
For $d_i\in \mathbb{N}$, $i=1,\ldots, l$,
\begin{equation*}\label{eq: en linfty coinciden}
B_{\infty}^{d_1}{\otimes}_{\pi^{inj}}\cdots {\otimes}_{\pi^{inj}} B_{\infty}^{d_l}=B_{\infty}^{d_1}{\otimes}_{\pi}\cdots {\otimes}_{\pi} B_{\infty}^{d_l}
\end{equation*}
\begin{equation*}\label{eq: en l1 coinciden}
B_{1}^{d_1}{\otimes}_{\epsilon^{proj}}\cdots {\otimes}_{\epsilon^{proj}} B_{1}^{d_l}=B_{1}^{d_1}{\otimes}_{\epsilon}\cdots {\otimes}_{\epsilon} B_{1}^{d_l}.
\end{equation*}
\end{cor}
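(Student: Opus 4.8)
The plan is to read off both identities from the explicit descriptions of $\otimes_{\pi^{inj}}$ and $\otimes_{\epsilon^{proj}}$ furnished by Theorem \ref{thm: la menor injectiva}: I would prove the first equality directly and then deduce the second purely by duality. The conceptual reason both hold is that $\ell_\infty^{d_i}$ is $1$-complemented in $\ell_\infty$ (dually, $\ell_1^{d_i}$ is a $1$-norming quotient of $\ell_1$ with an isometric section), so passing to the injective hull does not enlarge the projective ball of cubes.

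For the first identity I would apply Theorem \ref{thm: la menor injectiva} (1) with $P_i=B_\infty^{d_i}$, choosing for $j_i\colon\ell_\infty^{d_i}\hookrightarrow\ell_\infty$ the canonical isometric embedding onto the first $d_i$ coordinates (legitimate, since by Lemma \ref{lem: indep del enaje linfity} the right-hand side is independent of the embedding). The crucial feature is that the coordinate restriction $\rho_i\colon\ell_\infty\to\ell_\infty^{d_i}$ has norm one and satisfies $\rho_i\circ j_i=\mathrm{id}$. Writing $u\in\otimes_{i=1}^{l}\ell_\infty^{d_i}$ and $\tilde u=(j_1\otimes\cdots\otimes j_l)(u)$, the uniform (metric mapping) property of $\pi$ applied to the norm-one maps $j_i$ gives $\pi(\tilde u)\le\pi(u)$, while applying it to the norm-one maps $\rho_i$ together with $(\rho_1\otimes\cdots\otimes\rho_l)(\tilde u)=u$ gives $\pi(u)\le\pi(\tilde u)$. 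Hence $\pi(\tilde u)=\pi(u)$, so $\tilde u\in B_{\ell_\infty\hat\otimes_\pi\cdots\hat\otimes_\pi\ell_\infty}$ if and only if $u\in B_\infty^{d_1}\otimes_\pi\cdots\otimes_\pi B_\infty^{d_l}$. Since $j_1\otimes\cdots\otimes j_l$ maps $\otimes_i\mathbb{E}_i$ bijectively onto $\otimes_i j_i(\mathbb{E}_i)$, substituting this equivalence into the formula of Theorem \ref{thm: la menor injectiva} (1) yields exactly the first equality.

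For the second identity I would invoke duality. By (\ref{eq: projective hull}) and Proposition \ref{prop:property dual product} (1) one has $\otimes_{\epsilon^{proj}}=\otimes_{(\pi^{inj})'}$, so by the definition (\ref{eqn: dual tensor product}) of the dual product and the relation $(B_1^{d_i})^{\circ}=B_\infty^{d_i}$ in the standard Euclidean structure,
\[
B_1^{d_1}\otimes_{\epsilon^{proj}}\cdots\otimes_{\epsilon^{proj}}B_1^{d_l}=\bigl(B_\infty^{d_1}\otimes_{\pi^{inj}}\cdots\otimes_{\pi^{inj}}B_\infty^{d_l}\bigr)^{\circ}.
\]
The first equality already established replaces the right-hand side by $(B_\infty^{d_1}\otimes_\pi\cdots\otimes_\pi B_\infty^{d_l})^{\circ}$, and Proposition \ref{prop: duality} (2) together with $(B_\infty^{d_i})^{\circ}=B_1^{d_i}$ turns this into $B_1^{d_1}\otimes_\epsilon\cdots\otimes_\epsilon B_1^{d_l}$, as claimed. (Alternatively, one can argue directly from Theorem \ref{thm: la menor injectiva} (2), taking the coordinate quotient $q_i\colon\ell_1\to\ell_1^{d_i}$, which admits the isometric section $\ell_1^{d_i}\hookrightarrow\ell_1$, and repeating the bidirectional metric-mapping argument for $\epsilon$.)

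The only substantive step, and the sole place where more than formal manipulation enters, is the bidirectional metric-mapping estimate $\pi(\tilde u)=\pi(u)$ in the second paragraph. It hinges precisely on the existence of the norm-one retraction $\rho_i$, i.e. on the $1$-injectivity of finite-dimensional $\ell_\infty$ spaces; this is exactly what forces the injective associate $/\pi\setminus$ to coincide with $\pi$ on $\otimes_i\ell_\infty^{d_i}$ and thus leaves the cube's projective tensor product unchanged. Everything else reduces to bookkeeping with the duality relations from Subsection \ref{subs: inj proj hulls}.
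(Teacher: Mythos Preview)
Your proof is correct and follows essentially the same route as the paper: both apply Theorem \ref{thm: la menor injectiva}(1) with the canonical coordinate embeddings $j_i:\ell_\infty^{d_i}\hookrightarrow\ell_\infty$, exploit the norm-one retraction $\rho_i$ (the paper calls it $\Pi_i$) satisfying $\rho_i\circ j_i=\mathrm{id}$ to see that the projective norm is unchanged under $j_1\otimes\cdots\otimes j_l$, and then obtain the second identity by duality. The only cosmetic difference is that the paper phrases the key step as ``$\otimes_i\Pi_i$ is a norm-one projection because $\pi$ is projective,'' whereas you spell out the two inequalities $\pi(\tilde u)\le\pi(u)$ and $\pi(u)\le\pi(\tilde u)$ directly via the uniform property.
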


\begin{proof}
Let us denote by
$j_i:\ell_{\infty}^{d_i}\hookrightarrow \ell_{\infty}$
the inclusion mappings into the first $d_i$ coordinates and $\Pi_i:\ell_{\infty}\rightarrow \ell_{\infty}^{d_i}$ the projection mappings.  Since $\pi$ is a projective tensor norm, we have that $$(\otimes_{i=1}^l \Pi_i):{\ell_{\infty}\hat{\otimes}_{\pi}\cdots \hat{\otimes}_{\pi} \ell_{\infty}}\rightarrow {\ell_{\infty}^{d_1}\hat{\otimes}_{\pi}\cdots \hat{\otimes}_{\pi} \ell_{\infty}^{d_l}}$$
is a norm one projection. This, along with
Theorem \ref{thm: la menor injectiva} implies    that
\begin{align*}
(\otimes_{i=1}^l \Pi_i\circ j_i)(B_{\infty}^{d_1}\otimes_{\pi^{inj}}\cdots {\otimes}_{\pi^{inj}} B_{\infty}^{d_l})=&(\otimes_{i=1}^l \Pi_i)(
B_{\ell_{\infty}\hat{\otimes}_{\pi}\cdots \hat{\otimes}_{\pi} \ell_{\infty}}\cap \otimes_{i=1}^l j_i(\mathbb{R}^{d_i}))\\=&
B_{\infty}^{d_1}{\otimes}_{\pi}\cdots {\otimes}_{\pi} B_{\infty}^{d_l}.
\end{align*}
Hence, since $\Pi_i\circ j_i$ is the identity on $\mathbb{R}^{d_i},$ we have:
$$B_{\infty}^{d_1}{\otimes}_{\pi^{inj}}\cdots {\otimes}_{\pi^{inj}} B_{\infty}^{d_l}=B_{\infty}^{d_1}{\otimes}_{\pi}\cdots {\otimes}_{\pi} B_{\infty}^{d_l}.$$

The statement concerning the biggest projective tensor product of $B_1^{d_i}$ follows by duality.
\end{proof}
The arguments given for proving Theorem \ref{thm: la menor injectiva} can be adapted to the  case  where the embeddings are not necessarily isometric. The precise  statement is as follows:
\begin{prop}
\label{prop: finite projective injective}
For  $i=1,\ldots l$ and $d_i\in \mathbb{N}$, let $j_i:(\mathbb{E}_{i}, g_{P_i})\hookrightarrow \ell_{\infty}^{d_i}$ be  embeddings with $\|(j_i)\|=1$, $\|(j_i)^{-1}\|\leq M_i$. Then
$$P_{1}\otimes_{\pi^{inj}}\cdots
\otimes_{\pi^{inj}}P_{l}\stackrel{M_1\cdots M_l}{\simeq}(j_1\otimes \cdots\otimes j_l)^{-1}\big\{B_{\infty}^{d_1}{\otimes}_{\pi}\cdots {\otimes}_{\pi} B_{\infty}^{d_l}\cap \otimes_{i=1}^{l}j_i(\mathbb{E}_{i})\big\}.$$
Furthermore,
$$
P_{1}^{\circ}\otimes_{\epsilon^{proj}}\cdots
\otimes_{\epsilon^{proj}}P_{l}^{\circ}\stackrel{M_1\cdots M_l}{\simeq}(j_1^t\otimes \cdots\otimes j_l^t)(B_{1}^{d_1}\otimes_\epsilon\cdots\otimes_\epsilon B_{1}^{d_l}).
$$
\end{prop}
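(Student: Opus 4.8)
The plan is to deduce this finite, non-isometric version from the isometric Theorem~\ref{thm: la menor injectiva} by transporting the norm from $\ell_\infty^{d_i}$ and absorbing the distortion $M_i$ at the very end. First I would introduce, for each $i$, the $0$-symmetric convex body
\[
Q_i:=j_i^{-1}\left(B_\infty^{d_i}\cap j_i(\mathbb{E}_i)\right)\subset\mathbb{E}_i,
\]
which by construction is the unit ball making $j_i:(\mathbb{E}_i,g_{Q_i})\hookrightarrow\ell_\infty^{d_i}$ an isometric embedding. The two hypotheses unwind to the inclusions $P_i\subseteq Q_i\subseteq M_iP_i$: indeed $\|j_i\|=1$ means $j_i(P_i)\subseteq B_\infty^{d_i}$, i.e. $P_i\subseteq Q_i$, while $\|j_i^{-1}\|\le M_i$ means $B_\infty^{d_i}\cap j_i(\mathbb{E}_i)\subseteq M_i\,j_i(P_i)$, i.e. $Q_i\subseteq M_iP_i$.

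Next I would show that the right-hand side of the first assertion is exactly $Q_1\otimes_{\pi^{inj}}\cdots\otimes_{\pi^{inj}}Q_l$. Composing each $j_i$ with the canonical isometry $\ell_\infty^{d_i}\hookrightarrow\ell_\infty$ and invoking Theorem~\ref{thm: la menor injectiva}(1) for these isometric embeddings, the norm-one projections $\ell_\infty\to\ell_\infty^{d_i}$ used in Corollary~\ref{coro: en linfty coinciden} let me replace $B_{\ell_{\infty}\hat{\otimes}_{\pi}\cdots\hat{\otimes}_{\pi}\ell_{\infty}}$ by $B_\infty^{d_1}\otimes_\pi\cdots\otimes_\pi B_\infty^{d_l}$ and conclude
\[
Q_1\otimes_{\pi^{inj}}\cdots\otimes_{\pi^{inj}}Q_l=(j_1\otimes\cdots\otimes j_l)^{-1}\left(B_\infty^{d_1}\otimes_\pi\cdots\otimes_\pi B_\infty^{d_l}\cap\otimes_{i=1}^l j_i(\mathbb{E}_i)\right).
\]
It then remains to push $P_i\subseteq Q_i\subseteq M_iP_i$ through $\otimes_{\pi^{inj}}$. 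Applying its uniform property to the identity maps gives $P_1\otimes_{\pi^{inj}}\cdots\otimes_{\pi^{inj}}P_l\subseteq Q_1\otimes_{\pi^{inj}}\cdots\otimes_{\pi^{inj}}Q_l\subseteq M_1P_1\otimes_{\pi^{inj}}\cdots\otimes_{\pi^{inj}}M_lP_l$, and Proposition~\ref{prop:linear invariance of products} applied to the dilations $M_iI_{\mathbb{E}_i}$ identifies the last body with $(M_1\cdots M_l)(P_1\otimes_{\pi^{inj}}\cdots\otimes_{\pi^{inj}}P_l)$. Reading these inclusions together with the displayed equality is precisely the asserted $\stackrel{M_1\cdots M_l}{\simeq}$.

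For the second assertion I would take polars. Since $\epsilon'=\pi$ and $\epsilon^{proj}=((\epsilon')^{inj})'=(\pi^{inj})'$ by (\ref{eq: projective hull}) and Proposition~\ref{prop:property dual product}, the polar of $P_1\otimes_{\pi^{inj}}\cdots\otimes_{\pi^{inj}}P_l$ equals $P_1^\circ\otimes_{\epsilon^{proj}}\cdots\otimes_{\epsilon^{proj}}P_l^\circ$; as polarity reverses inclusions and inverts dilations, the primal sandwich turns into the desired $M_1\cdots M_l$-relation between $P_1^\circ\otimes_{\epsilon^{proj}}\cdots\otimes_{\epsilon^{proj}}P_l^\circ$ and $R^\circ$, where $R$ is the preimage appearing on the right-hand side of the first assertion. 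The step I expect to be the main obstacle is the explicit computation of $R^\circ$: with $J:=j_1\otimes\cdots\otimes j_l$ injective and $B:=B_\infty^{d_1}\otimes_\pi\cdots\otimes_\pi B_\infty^{d_l}$, I must verify
\[
\left(J^{-1}\!\left(B\cap J(\otimes_{i=1}^l\mathbb{E}_i)\right)\right)^{\circ}=J^t\left(B^{\circ}\right).
\]
The inclusion $\supseteq$ is immediate from $\langle u,J^tw\rangle_H=\langle Ju,w\rangle_H$, while the reverse inclusion is a Hahn--Banach extension of exactly the type recorded in (\ref{eq:transpose of polar TA}), extending a functional bounded by $1$ on the section $B\cap J(\otimes_{i=1}^l\mathbb{E}_i)$ to one bounded by $1$ on all of $B$. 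Once this is in hand, Proposition~\ref{prop: duality} gives $B^{\circ}=B_1^{d_1}\otimes_\epsilon\cdots\otimes_\epsilon B_1^{d_l}$ and $J^t=j_1^t\otimes\cdots\otimes j_l^t$, which is the claimed right-hand side.
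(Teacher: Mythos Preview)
Your proof is correct. For the first assertion your route---introduce the auxiliary bodies $Q_i$ making the $j_i$ isometric, invoke Theorem~\ref{thm: la menor injectiva}(1) together with the $\ell_\infty^{d_i}\hookrightarrow\ell_\infty$ complementation from Corollary~\ref{coro: en linfty coinciden}, then sandwich via the uniform property---is exactly the adaptation of Theorem~\ref{thm: la menor injectiva} that the paper sketches.

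For the second assertion your route differs from the paper's. You dualize the first assertion directly, computing the polar of the section via the Hahn--Banach identity $\left(J^{-1}(B\cap\mathrm{Im}\,J)\right)^{\circ}=J^t(B^{\circ})$. The paper instead argues on the $\otimes_{\epsilon^{proj}}$ side: since $j_i^t:\ell_1^{d_i}\to(\mathbb{E}_i,g_{P_i^{\circ}})$ is surjective with $j_i^t(B_1^{d_i})\subseteq P_i^{\circ}\subseteq M_i\,j_i^t(B_1^{d_i})$, Corollary~\ref{coro: en linfty coinciden} and the projectivity of $\otimes_{\epsilon^{proj}}$ give $(j_1^t\otimes\cdots\otimes j_l^t)(B_1^{d_1}\otimes_\epsilon\cdots\otimes_\epsilon B_1^{d_l})=j_1^t(B_1^{d_1})\otimes_{\epsilon^{proj}}\cdots\otimes_{\epsilon^{proj}}j_l^t(B_1^{d_l})$, and then the uniform property yields the sandwich. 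The two arguments are dual to one another: yours makes the Hahn--Banach step explicit, while the paper's hides it inside the already-proved duality between injectivity and projectivity (Theorem~\ref{thm:duality inj and proj}). Both are valid; the paper's is shorter once those structural results are in place, while yours is more self-contained.
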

The second part in the last proposition follows  from Corollary \ref{coro: en linfty coinciden}, the uniform property and the first part of the proposition.

\subsection{Relations with some classes of $0$-symmetric convex bodies}\label{subs:John ellipsoid}

Ellipsoids are fundamental tools in the study of both convex bodies and finite dimensional Banach spaces, as can be seen in \cite{MilmanAGA,JohnF,Tomczak-Jaegermann1989}. Below we establish the relation between tensor products of convex bodies and John and L{\"o}wner ellipsoids.
Recall that given a $0$-symmetric convex body $P\subset\mathbb{E},$ its L{\"o}wner ellipsoid $L\ddot{o}w(P)$ and John ellipsoid $John(P)$  are the ellipsoids of minimal (resp. maximal) volume  containing (resp. contained in) $P.$ We suggest \cite[Chapter 3]{Tomczak-Jaegermann1989} for a deeper discussion of this matter.

In the following $\otimes_2$ denotes the Hilbert tensor product of ellipsoids as defined in Section 2.2 or \cite[Section 4]{tensorialbodies}. Since $\otimes_2$ is not defined on the class of $0$-symmetric convex bodies, it is not a tensor product  of $0$-symmetric convex bodies. However, it is not difficult to show that when restricted to the class of ellipsoids, $\otimes_2$ satisfies both properties in Definition \ref{defn: tensor prod of 0symm cb}.

\begin{prop}
\label{prop: relation with lowner john ellipsoids}
 Let $\otimes_\alpha$ be a tensor product of $0$-symmetric convex bodies and let $P_i\subset\mathbb{E}_i,$ $i=1,\ldots,l$ be any tuple of $0$-symmetric convex bodies.
\begin{enumerate}
\item If $\otimes_\alpha\subset\otimes_2$ on the class of ellipsoids, then $L\ddot{o}w(P_1\otimes_\alpha\cdots\otimes_\alpha P_l)=L\ddot{o}w(P_1)\otimes_2\cdots\otimes_2 L\ddot{o}w(P_l).$
\item If $\otimes_2\subset\otimes_\alpha$ on the class of ellipsoids, then $John(P_1\otimes_\alpha\cdots\otimes_\alpha P_l)=John(P_1)\otimes_2\cdots\otimes_2 John(P_l).$
\end{enumerate}
\end{prop}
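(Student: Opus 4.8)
The plan is to establish the Löwner statement (1) directly, via John's characterization of extremal ellipsoids, and then to deduce the John statement (2) from it by polarity, exploiting that the two hypotheses are interchanged under duality.

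For (1), I would first reduce to the case in which every Löwner ellipsoid is already the Euclidean ball. Write $E_i=L\ddot{o}w(P_i)=T_i(B_{\mathbb{E}_i})$ with $T_i$ an invertible linear map and set $\tilde P_i=T_i^{-1}(P_i)$, so that $L\ddot{o}w(\tilde P_i)=B_{\mathbb{E}_i}$. Since $L\ddot{o}w$ commutes with linear isomorphisms (see \cite[Chapter 3]{Tomczak-Jaegermann1989}) and both $\otimes_\alpha$ (Proposition \ref{prop:linear invariance of products}) and $\otimes_2$ (on ellipsoids) are invariant under tensor products of isomorphisms, applying $T_1\otimes\cdots\otimes T_l$ reduces the claim to the following: if $L\ddot{o}w(\tilde P_i)=B_{\mathbb{E}_i}$ for every $i$, then $L\ddot{o}w(\tilde P_1\otimes_\alpha\cdots\otimes_\alpha \tilde P_l)=B_{\otimes_{H,i=1}^{l}\mathbb{E}_{i}}$.

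In this normalized situation I would argue as follows. From $\tilde P_i\subseteq B_{\mathbb{E}_i}$ and the uniform property (with identity maps) one gets $\tilde P_1\otimes_\alpha\cdots\otimes_\alpha \tilde P_l\subseteq B_{\mathbb{E}_1}\otimes_\alpha\cdots\otimes_\alpha B_{\mathbb{E}_l}$, and the hypothesis $\otimes_\alpha\subseteq\otimes_2$ on ellipsoids gives $B_{\mathbb{E}_1}\otimes_\alpha\cdots\otimes_\alpha B_{\mathbb{E}_l}\subseteq B_{\mathbb{E}_1}\otimes_2\cdots\otimes_2 B_{\mathbb{E}_l}=B_{\otimes_{H,i=1}^{l}\mathbb{E}_{i}}$; hence the tensor body is contained in the Euclidean ball. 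To see that the ball is the \emph{minimal} enclosing ellipsoid I would invoke John's theorem for $0$-symmetric bodies (\cite[Chapter 3]{Tomczak-Jaegermann1989}): as $B_{\mathbb{E}_i}=L\ddot{o}w(\tilde P_i)$, for each $i$ there are contact points $u^i_j\in\partial\tilde P_i\cap\partial B_{\mathbb{E}_i}$ and weights $c^i_j>0$ with $\sum_j c^i_j\,(u^i_j\otimes u^i_j)=I_{\mathbb{E}_i}$, where $u\otimes u$ denotes the orthogonal projection $x\mapsto\langle x,u\rangle u$. The decisive computation is the operator identity $v\otimes v=(u^1\otimes u^1)\otimes\cdots\otimes(u^l\otimes u^l)$ for a decomposable unit vector $v=u^1\otimes\cdots\otimes u^l$, which is immediate from the formula for $\langle\cdot,\cdot\rangle_H$ on decomposable tensors. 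By multilinearity of the tensor product of operators, tensoring the $l$ John decompositions yields $\sum\big(\prod_i c^i_{j_i}\big)\,w\otimes w=\bigotimes_i I_{\mathbb{E}_i}=I$, where $w=u^1_{j_1}\otimes\cdots\otimes u^l_{j_l}$. Each such $w$ is a unit vector lying in $\tilde P_1\otimes_\pi\cdots\otimes_\pi \tilde P_l$, hence in the tensor body; as the latter sits inside the Euclidean ball, $w$ is a genuine contact point on the common boundary. John's characterization then forces $B_{\otimes_{H,i=1}^{l}\mathbb{E}_{i}}$ to be the Löwner ellipsoid of the tensor body, proving (1).

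Statement (2) I would obtain from (1) by polarity. Using $John(Q)=(L\ddot{o}w(Q^\circ))^\circ$ for $0$-symmetric bodies together with the definition of the dual product (\ref{eqn: dual tensor product}), one has $John(P_1\otimes_\alpha\cdots\otimes_\alpha P_l)=\big(L\ddot{o}w(P_1^\circ\otimes_{\alpha'}\cdots\otimes_{\alpha'}P_l^\circ)\big)^\circ$. The self-duality of $\otimes_2$ on ellipsoids (i.e. $\otimes_{2'}=\otimes_2$, which follows directly from the definition of $\otimes_2$ and the transpose relation) combined with Proposition \ref{prop:property dual product} turns the hypothesis $\otimes_2\subseteq\otimes_\alpha$ into $\otimes_{\alpha'}\subseteq\otimes_2$ on ellipsoids. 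Thus part (1) applies to $\otimes_{\alpha'}$ and gives $L\ddot{o}w(P_1^\circ\otimes_{\alpha'}\cdots\otimes_{\alpha'}P_l^\circ)=L\ddot{o}w(P_1^\circ)\otimes_2\cdots\otimes_2 L\ddot{o}w(P_l^\circ)$. Polarizing again, using $\otimes_2$ self-duality and $(L\ddot{o}w(P_i^\circ))^\circ=John(P_i)$, yields $John(P_1\otimes_\alpha\cdots\otimes_\alpha P_l)=John(P_1)\otimes_2\cdots\otimes_2 John(P_l)$.

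I expect the main obstacle to be the minimality direction of (1): confirming that the Euclidean ball is not merely an enclosing ellipsoid but the extremal one. The two delicate points are that the tensors $w$ of contact points are again contact points — this is exactly where the containment of the tensor body inside the Euclidean ball is indispensable, and is precisely what breaks down in a naive direct attack on the John case, motivating the duality argument — and that the tensored John weights assemble into a bona fide decomposition of the identity operator on the Hilbert tensor product.
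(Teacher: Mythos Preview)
Your proposal is correct and follows essentially the same route as the paper: for (1) you tensor the John decompositions of the identity coming from the individual L\"owner positions, verify that the decomposable tensors of contact points are again contact points (using $\otimes_\pi\subseteq\otimes_\alpha$ for membership and the hypothesis $\otimes_\alpha\subseteq\otimes_2$ for the enclosing ball), and conclude via John's characterization; for (2) you pass to the dual product and invoke (1). The paper does exactly this, the only cosmetic difference being that it works directly with the scalar product determined by $L\ddot{o}w(P_i)$ rather than first normalizing each $L\ddot{o}w(P_i)$ to the Euclidean ball.
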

\begin{proof}
We will prove the result for the L{\"o}wner ellipsoid. The proof for the John ellipsoid follows from the duality between L{\"o}wner and John ellipsoids, and  Proposition \ref{prop:property dual product}.

Suppose that $\otimes_\alpha$ satisfies the inclusion in (1) and
let us denote  $d_i$  the dimension of $\mathbb{E}_i$.
By \cite[Theorem 15.4]{Tomczak-Jaegermann1989}, there exist $m_i$, $x^{i}_{k_i}\in\mathbb{E}_i,$ and positive real numbers $c^{i}_{k_i},$ for $k_i=1,\ldots,m_i,$ s.t. $g_{P_i}(x_{k_i}^i)=g_{P^{\circ}_i}(x_{k_i}^i)=g_{L\ddot{o}w(P_i)}(x_{k_i}^i)=1,$ $I_{\mathbb{E}_i}=\sum_{k_i=1}^{m_i} c^{i}_{k_i}\langle\cdot,x_{k_i}^{i}\rangle_{L\ddot{o}w(P_i)}x^{i}_{k_i}$ and $\sum_{k_i=1}^{m_i} c^{i}_{k}=d_i.$ From this and (1) in Definition \ref{defn: tensor prod of 0symm cb}, it follows that $I_{\otimes_{i=1}^{l}\mathbb{E}_i}$ has a representation as in \cite[Theorem 15.4]{Tomczak-Jaegermann1989} with the vectors $x_{k_1}^{1}\otimes\cdots\otimes x_{k_l}^{l},$ the scalars $c^{1}_{k_1}\cdots c^{l}_{k_l}$ and the scalar product associated to $L\ddot{o}w(P_1)\otimes_2\cdots\otimes_2 L\ddot{o}w(P_l).$  Furthermore, since $\otimes_\alpha\subset\otimes_2,$ then $P_1\otimes_\alpha\cdots\otimes_\alpha P_l\subset L\ddot{o}w(P_1)\otimes_\alpha\cdots\otimes_\alpha L\ddot{o}w(P_l)\subset L\ddot{o}w(P_1)\otimes_2\cdots\otimes_2 L\ddot{o}w(P_l).$ Hence, by the uniqueness of the L{\"o}wner ellipsoid, $L\ddot{o}w(P_1\otimes_\alpha\cdots\otimes_\alpha P_l)=L\ddot{o}w(P_1)\otimes_2\cdots\otimes_2 L\ddot{o}w(P_l).$
\end{proof}

Observe that $(1)$  in  Proposition \ref{prop: relation with lowner john ellipsoids} always holds for $\otimes_\alpha=\otimes_\pi$.  This case was already proved in
 \cite[Lemma 1]{Aubrun2006} .
   It is worth to notice  that also $\otimes_2\subset\otimes_\epsilon$ is always fulfilled. Thus,  Proposition \ref{prop: relation with lowner john ellipsoids} proves that the John ellipsoid of the injective tensor product of $0$-symmetric convex bodies is the Hilbert tensor product of the  John ellipsoids.
\begin{rem}
The inclusions in Proposition \ref{prop: relation with lowner john ellipsoids} can not be dropped. To prove this, observe that $John(B_2^{d_1}\otimes_\pi\cdots\otimes_\pi B_2^{d_l})\subset B_2^{d_1}\otimes_\pi\cdots\otimes_\pi B_2^{d_l}$ is not the Hilbert tensor product of $B_2^{d_i},$ $i=1,\ldots,l.$
\end{rem}

The class of $0$-symmetric convex bodies  with enough symmetries (the unit balls  $B_{p}^{d}$, for example) is a well known class of convex sets used, among others, to study  Banach-Mazur distances, see \cite[$\mathsection$16]{Tomczak-Jaegermann1989}.
Given a $0$-symmetric convex body $P\subset\mathbb{E}$, the set of symmetries of $P$ is denoted by $Iso(P).$ It consists of the linear maps $w:\mathbb{E}\rightarrow\mathbb{E}$ such that $wP=P.$ Recall that a $0$-symmetric convex body $P$ has enough symmetries if the only linear maps $T:\mathbb{E}\rightarrow\mathbb{E}$ that conmute with each $w\in Iso(P)$ are the scalar multiples of the identity map $I_\mathbb{E}$.

\begin{prop}
\label{prop:convex bodies with enough symmetries}
Let $\otimes_\alpha$ be a tensor product of $0$-symmetric convex bodies. If $P_i\subset\mathbb{E}_i$, $i=1,\ldots,l,$ are $0$-symmetric convex bodies with enough symmetries then $P_1\otimes_\alpha\cdots\otimes_\alpha P_l$ has enough symmetries too.
\end{prop}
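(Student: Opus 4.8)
The plan is to exhibit enough symmetries of $P_1\otimes_\alpha\cdots\otimes_\alpha P_l$ to force its commutant to be trivial, and then to reduce the problem to a purely algebraic statement about commutants of tensor products of subalgebras of endomorphisms.

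First I would record the source of symmetries. Write $P:=P_1\otimes_\alpha\cdots\otimes_\alpha P_l\subset\otimes_{i=1}^l\mathbb{E}_i$. If $w_i\in Iso(P_i)$ for each $i$, then $w_i(P_i)=P_i$, so by the linear invariance of $\otimes_\alpha$ under tensor products of isomorphisms (Proposition \ref{prop:linear invariance of products}),
\[
w_1\otimes\cdots\otimes w_l(P)=w_1P_1\otimes_\alpha\cdots\otimes_\alpha w_lP_l=P_1\otimes_\alpha\cdots\otimes_\alpha P_l=P.
\]
Hence $w_1\otimes\cdots\otimes w_l\in Iso(P)$; that is, $Iso(P)$ contains the set $\mathcal{S}:=\left\{w_1\otimes\cdots\otimes w_l:w_i\in Iso(P_i)\right\}$.

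Next I would reduce to a commutant computation. Let $T:\otimes_{i=1}^l\mathbb{E}_i\to\otimes_{i=1}^l\mathbb{E}_i$ be linear and commute with every element of $Iso(P)$; by the definition of enough symmetries it suffices to prove $T\in\mathbb{R}\,I$. Since $\mathcal{S}\subseteq Iso(P)$, the map $T$ lies in the commutant $\mathcal{S}'$, so it is enough to show $\mathcal{S}'=\mathbb{R}\,I$. Now the linear span of $\mathcal{S}$ is exactly the subalgebra $\mathcal{A}_1\otimes\cdots\otimes\mathcal{A}_l$ of $\mathcal{L}(\otimes_{i=1}^l\mathbb{E}_i)=\otimes_{i=1}^l\mathcal{L}(\mathbb{E}_i)$, where $\mathcal{A}_i:=\mathrm{span}\,Iso(P_i)\subseteq\mathcal{L}(\mathbb{E}_i)$ is the unital algebra generated by the group $Iso(P_i)$ (its span is closed under products because products of symmetries are symmetries). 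A linear map commutes with $\mathcal{S}$ if and only if it commutes with this algebra, so $\mathcal{S}'=(\mathcal{A}_1\otimes\cdots\otimes\mathcal{A}_l)'$, and the hypothesis that each $P_i$ has enough symmetries says precisely that $\mathcal{A}_i'=\mathbb{R}\,I_{\mathbb{E}_i}$.

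The heart of the argument, and the step I expect to require the most care, is the commutant identity
\[
(\mathcal{A}_1\otimes\cdots\otimes\mathcal{A}_l)'=\mathcal{A}_1'\otimes\cdots\otimes\mathcal{A}_l'.
\]
Granting it, $\mathcal{S}'=(\mathbb{R}\,I_{\mathbb{E}_1})\otimes\cdots\otimes(\mathbb{R}\,I_{\mathbb{E}_l})=\mathbb{R}\,I$, which is what we want. To prove the identity I would isolate one factor at a time: fixing $i$ and writing $T=\sum_k S_k\otimes R_k$ with $S_k\in\mathcal{L}(\mathbb{E}_i)$ and $\{R_k\}$ a basis of $\otimes_{j\neq i}\mathcal{L}(\mathbb{E}_j)$, commutation of $T$ with the operators acting as $a\in\mathcal{A}_i$ on the $i$-th factor and as the identity elsewhere forces $S_k a=a S_k$ for all $k$ by linear independence of the $R_k$, so $T$ lies in the subspace with $\mathcal{A}_i'$ in the $i$-th slot and all of $\mathcal{L}(\mathbb{E}_j)$ elsewhere. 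Intersecting these $l$ conditions and invoking the elementary fact that $\bigcap_{i=1}^l\big(\mathcal{L}(\mathbb{E}_1)\otimes\cdots\otimes\mathcal{A}_i'\otimes\cdots\otimes\mathcal{L}(\mathbb{E}_l)\big)=\mathcal{A}_1'\otimes\cdots\otimes\mathcal{A}_l'$ (proved by choosing, in each factor, a basis of $\mathcal{L}(\mathbb{E}_j)$ extending one of $\mathcal{A}_j'$) yields the claim. I would stress that this is a linear-independence argument valid over $\mathbb{R}$ with no appeal to algebraic closure, which is essential here: the enough-symmetries hypothesis $\mathcal{A}_i'=\mathbb{R}\,I_{\mathbb{E}_i}$ is exactly the real-scalar condition that makes the final answer $\mathbb{R}\,I$ rather than a larger division algebra.
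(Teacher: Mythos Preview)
Your proof is correct. Both the paper and you start the same way---by noting that Proposition \ref{prop:linear invariance of products} gives $w_1\otimes\cdots\otimes w_l\in Iso(P)$, so it suffices to show that anything commuting with these pure tensors is a scalar---but the executions diverge from there. The paper works concretely with the Hilbert scalar product: given $S$ in the commutant and fixing vectors $a^j,b^j\in\mathbb{E}_j$ for $j\neq i$, it defines a ``slice'' $S_{\cdots}:\mathbb{E}_i\to\mathbb{E}_i$ via $\langle S_{\cdots}(x),x'\rangle_{\mathbb{E}_i}=\langle S(\cdots\otimes x\otimes\cdots),\cdots\otimes x'\otimes\cdots\rangle_H$, checks this slice commutes with $Iso(P_i)$, hence equals some scalar $\lambda_{\cdots}I_{\mathbb{E}_i}$, and then tracks the numerical relations among these scalars (equation (\ref{eq: induction case order l})) inductively to conclude $S=\lambda I$. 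You instead package everything algebraically: you pass to the unital subalgebras $\mathcal{A}_i=\mathrm{span}\,Iso(P_i)$, invoke the general commutant identity $(\mathcal{A}_1\otimes\cdots\otimes\mathcal{A}_l)'=\mathcal{A}_1'\otimes\cdots\otimes\mathcal{A}_l'$, and read off the answer from $\mathcal{A}_i'=\mathbb{R}\,I_{\mathbb{E}_i}$. Your route is cleaner and handles all $l$ at once without induction or reference to the scalar product, at the cost of a short digression into tensor-product commutants; the paper's route is more elementary in that it never names an algebra, but pays for this with an inductive bookkeeping of scalars.
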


\begin{proof}
The result is proved using an inductive argument. Note that the case of one factor is trivial.  We briefly expose the proof for the case of three factors. The case of two factors and the general one,  follow analogously. Induction is used to prove that the  corresponding equalities (\ref{eq: induction case order l}) imply  the existence of the scalar $\lambda$ below.
 In \cite[Excercise 4.27]{Aubrun2017}  one finds  the case of two factors for the product $\otimes_\pi$.

 To that end, observe that  by Proposition \ref{prop:linear invariance of products}, $w_1\otimes w_2\otimes w_3\in Iso(P_1\otimes_\alpha P_2\otimes_\alpha P_3)$ for every $w_i\in Iso(P_i).$ Let $S:\mathbb{E}_{1}\otimes\mathbb{E}_{2}\otimes\mathbb{E}_{3}\rightarrow\mathbb{E}_{1}\otimes\mathbb{E}_{2}\otimes\mathbb{E}_{3}$ be a linear map that conmutes with the elements of $Iso(P_1\otimes_\alpha P_2\otimes_\alpha P_3).$

For each $a^2,b^2\in\mathbb{E}_{2}$ and $a^3,b^3\in\mathbb{E}_{3},$ let $S_{a^2b^2a^3b^3}$ be the linear map on $\mathbb{E}_{1}$ defined via duality as
$\langle S_{a^2b^2a^3b^3}(x),x^{\prime}\rangle_{\mathbb{E}_1}=\langle S(x\otimes a^2\otimes a^3),x^{\prime}\otimes b^2\otimes b^3\rangle_H.$ It is not difficult to see that $S_{a^2b^2a^3b^3}$ conmutes with each $w_1\in Iso(P_1).$ Hence, $S_{a^2b^2a^3b^3}=\lambda_{a^2b^2a^3b^3}I_{\mathbb{E}_1}.$ In a similar way, one can define $S_{a^1b^1a^3b^3}:\mathbb{E}_2\rightarrow\mathbb{E}_2,$ $S_{a^1b^1a^2b^2}:\mathbb{E}_3\rightarrow\mathbb{E}_3$ for every $a^1b^1\in\mathbb{E}_1.$ Clearly, $S_{a^1b^1a^3b^3}=\lambda_{a^1b^1a^3b^3}I_{\mathbb{E}_2},$  $S_{a^1b^1a^2b^2}=\lambda_{a^1b^1a^2b^2}I_{\mathbb{E}_3}$ also hold. Thus by evaluating on $a^i,b^i\in\mathbb{E}_i,$ $i=1,2,3,$ we have:
\begin{equation}
\label{eq: induction case order l}
 \lambda_{a^2b^2a^3b^3}\langle a^1,b^1\rangle_{\mathbb{E}_1}=\lambda_{a^1b^1a^3b^3}\langle a^2,b^2\rangle_{\mathbb{E}_2}=\lambda_{a^1b^1a^2b^2}\langle a^3,b^3\rangle_{\mathbb{E}_3},
\end{equation}
for every $a^i,b^i\in\mathbb{E}_i$. The latter implies that, $\lambda_{a^2b^2a^3b^3}=\lambda\langle a^1,b^1\rangle_{\mathbb{E}_1},$ $ \lambda_{a^1b^1a^3b^3}=\lambda\langle a^2,b^2\rangle_{\mathbb{E}_2}$ and $\lambda_{a^1b^1a^2b^2}=\lambda\langle a^3,b^3\rangle_{\mathbb{E}_3}$ for some $\lambda.$ Hence, $S=\lambda I_{\mathbb{E}_1\otimes\mathbb{E}_2\otimes\mathbb{E}_3}.$
\end{proof}

\section{Order 2 tensor products of $0$-symmetric convex bodies and ideals of linear operators}\label{sect: Hilbertian tsr pdct}

In the case of products of order two, the theory   of   tensor products of Banach spaces   is closely related to the theory of operators ideals. This fact  can be found  in  the monographs \cite{defantfloret, diestelfourie, Ryan2013, Tomczak-Jaegermann1989}.

 Following the usual terminology,  a \textbf{Banach operator
ideal} consists of an assignment to each pair of Banach spaces $X,Y$
of a vector space $\mathcal{A}\left(X,Y\right)$
of bounded linear operators from $X$ to $Y,$ together with a norm
$A$ on this space, with the following properties:

\begin{enumerate}
\item $\mathcal{A}\left(X,Y\right)$ is a linear subspace of $\mathcal{L}\left(X,Y\right)$
which contains the finite rank operators. Furthermore, for every $\varphi\in X^{*}$
and $y\in Y$, $A\left(\varphi\left(\cdot\right)y\right)=\left\Vert \varphi\right\Vert \left\Vert y\right\Vert $.

\item The ideal property: if $S\in\mathcal{A}\left(X_{0},Y_{0}\right),T\in\mathcal{L}\left(X,X_{0}\right)$
and $R\in\mathcal{L}\left(Y_{0},Y\right)$ then $RST\in\mathcal{A}\left(X,Y\right)$
and $A\left(RST\right)\leq\left\Vert R\right\Vert A\left(S\right)\left\Vert T\right\Vert .$

\item $\left(\mathcal{A}\left(X,Y\right),A\right)$ is a Banach space.
\end{enumerate}
If we only consider finite dimensional normed spaces, then $\mathcal{A}(M,N)=\mathcal{L}(M,N)$ with the norm $A$. Thus, in this context, we say that $A$ is an \textbf{ideal norm}. For a complete treatment of Banach operator ideals see \cite{defantfloret, pietsch1980operator}, and for the case of finite dimensions see \cite{Tomczak-Jaegermann1989}.

Let $u=\sum_{i=1}^{n}x_{i}\otimes y_{i}$ be an element of the tensor
product $M\otimes N$ of finite dimensional normed spaces. By $T_{u}$
we denote the linear map:
\begin{alignat*}{1}
T_{u}:M^{*} & \rightarrow N\\
x^{*} & \rightarrow\sum_{i=1}^{n}x^{*}\left(x_{i}\right)y_{i}.
\end{alignat*}
Conversely, let $T=\sum_{i=1}^{n}x_{i}^{*}\left(\cdot\right)y_{i}$
be a linear map from $M$ to $N.$ By $u_{T}$ we denote the vector $\sum_{i=1}^{n}x_{i}^{*}\otimes y_{i}$ that
belongs to $M^{*}\otimes N$.
The mappings $u_T\mapsto T_u$ and $T_u\mapsto u_T$ establish a natural bijection between tensor norms on finite dimensional spaces and ideal norms (see 17.1 and 17.2 of \cite{defantfloret}). Then, along with Theorem \ref{thm: main theorem the bijection}, we have:

\begin{cor}
\label{cor: ideals tensor norms tensor convex}
Given  a tensor product of order $2$ of $0$-symmetric
convex bodies $\otimes_{\alpha}$, for every pair of finite dimensional normed
spaces  $M,N$, define:
\[
A\left(T:M\rightarrow N\right):=\left\Vert u_{T}\right\Vert _{\otimes_{\alpha}}.
\]
Then $A$ is an ideal norm.
\end{cor}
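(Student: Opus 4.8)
The plan is to realize $A$ as the composition of two bijective correspondences: first the one of Theorem \ref{thm: main theorem the bijection}, which sends the tensor product $\otimes_{\alpha}$ to the tensor norm $\|\cdot\|_{\otimes_{\alpha}}$ on finite dimensional spaces, and then the classical identification between tensor norms and ideal norms recalled just above (17.1 and 17.2 of \cite{defantfloret}) via $T\mapsto u_T$. Since all the spaces involved are finite dimensional, it suffices to verify the axioms of an ideal norm restricted to this setting: that $A(T):=\|u_T\|_{\otimes_{\alpha}}$ is a norm on $\mathcal{L}(M,N)$, that it satisfies the normalization $A(\varphi(\cdot)y)=\|\varphi\|\,\|y\|$, and that it obeys the ideal inequality $A(RST)\le\|R\|\,A(S)\,\|T\|$. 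Completeness is automatic because $\mathcal{L}(M,N)$ is finite dimensional, and the requirement that the ideal contain the finite rank operators is vacuous here.

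First I would record that, by Theorem \ref{thm: main theorem the bijection}, $\|\cdot\|_{\otimes_{\alpha}}$ is a genuine tensor norm, hence in particular a reasonable crossnorm on each product $M^{*}\otimes N$. The assignment $T\mapsto u_T$ is a linear isomorphism of $\mathcal{L}(M,N)$ onto $M^{*}\otimes N$, so $A$ is simply the transport of the norm $\|\cdot\|_{\otimes_{\alpha}}$ through this isomorphism; therefore $A$ is a norm and $(\mathcal{L}(M,N),A)$ is complete. For the normalization, the rank-one operator $\varphi(\cdot)y$ corresponds to the elementary tensor $u_{\varphi(\cdot)y}=\varphi\otimes y$, and every reasonable crossnorm satisfies $\|\varphi\otimes y\|_{\otimes_{\alpha}}=\|\varphi\|\,\|y\|$, since the sandwich $\epsilon\le\|\cdot\|_{\otimes_{\alpha}}\le\pi$ forces equality on elementary tensors. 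This gives $A(\varphi(\cdot)y)=\|\varphi\|\,\|y\|$.

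The heart of the argument is the ideal inequality. Writing $S=\sum_j a_j^{*}(\cdot)\,b_j$ with $a_j^{*}\in M_0^{*}$ and $b_j\in N_0$, a direct computation gives $RST=\sum_j (T^{*}a_j^{*})(\cdot)\,(Rb_j)$, where $T^{*}:M_0^{*}\to M^{*}$ is the adjoint of $T$; equivalently, $u_{RST}=(T^{*}\otimes R)(u_S)$ in $M^{*}\otimes N$. Applying the uniform property of the tensor norm $\|\cdot\|_{\otimes_{\alpha}}$ to the pair of maps $T^{*}$ and $R$, together with $\|T^{*}\|=\|T\|$, yields
\[
A(RST)=\|u_{RST}\|_{\otimes_{\alpha}}=\|(T^{*}\otimes R)(u_S)\|_{\otimes_{\alpha}}\le\|T^{*}\|\,\|R\|\,\|u_S\|_{\otimes_{\alpha}}=\|R\|\,A(S)\,\|T\|,
\]
which is exactly the ideal property. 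I expect the only delicate point to be the bookkeeping in this step: one must track the duality so that the first-factor map is the adjoint $T^{*}:M_0^{*}\to M^{*}$ rather than $T$ itself, and confirm that the uniform property, stated for maps between the factor spaces, applies verbatim to $T^{*}\otimes R:M_0^{*}\otimes N_0\to M^{*}\otimes N$. Once the identity $u_{RST}=(T^{*}\otimes R)(u_S)$ is established, the conclusion is immediate from the uniform property guaranteed by Theorem \ref{thm: main theorem the bijection}.
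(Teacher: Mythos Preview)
Your proposal is correct and follows exactly the paper's approach: the corollary is obtained by composing the bijection of Theorem~\ref{thm: main theorem the bijection} with the classical correspondence between tensor norms and ideal norms on finite dimensional spaces (17.1--17.2 in \cite{defantfloret}). The paper simply invokes this composition without further argument, whereas you spell out the verification of the ideal-norm axioms; your computation $u_{RST}=(T^{*}\otimes R)(u_{S})$ and the appeal to the uniform property are exactly what underlies the cited result in \cite{defantfloret}.
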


Indeed, we have that the mappings in the following diagram, which are as in Theorem \ref{thm: main theorem the bijection} and Corollary \ref{cor: ideals tensor norms tensor convex}, are bijections that respect duality, injectivity and projectivity:

     \begin{equation}\label{eq: ternaria}
\begin{tikzcd}[cramped, sep=small]
      & \left\{{\begin{tabular}{c}\text{Tensor products of}   \\ $0$-symm. convex bodies \\ {$\otimes_{\alpha}$}
   \end{tabular} }\right\} \arrow[leftrightarrow,shift left=-.5cm]{rd}\arrow[leftrightarrow, shift left=.5cm]{ld} & \\
  \left\{{\begin{tabular}{c}\text{Tensor norms on}\\ f.d. normed spaces \\ {$\|\cdot\|_{\alpha}$}
   \end{tabular} }\right\} \arrow[leftrightarrow]{rr}     & & \left\{{\begin{tabular}{c}\text{Ideals of }\\ operators on f.d.  \\  normed spaces \\ $\mathcal{A}$
   \end{tabular}}\right\}. \\
 \end{tikzcd}
\end{equation}


\subsection{The Hilbertian tensor product of $0$-symmetric convex bodies}\label{subsect: Hilbertian tsr pdct}

Let \-$(\mathcal{L}_{H}, \|\cdot\|_{H})$ be the operator ideal of bounded operators which factor through a Hilbert space. If $T\in \mathcal{L}_{H}(X; Y)$, then $\|T\|_{H}=inf\{\|R\|\,\|S\|\}$ where the infimum runs over all possible factorizations. Here we follow the notation in  \cite{diestelfourie} and \cite{Ryan2013}.  In \cite{PisierC60}
 this ideal and the norm  are denoted $(\Gamma_2,\gamma_2)$. The tensor norm  $\omega_2$ associated to this ideal  is known as the Hilbertian tensor norm.  It      is profusely  studied in \cite{defantfloret}, \cite{diestelfourie}, \cite{Ryan2013}. It is an injective tensor norm.
  A fundamental result in the theory of tensor norms involves $\omega_2,$ it is the so called  {\sl Grothendieck`s Theorem}. It is formulated, in terms of tensor norms, as:
\begin{equation}
\label{eq: Grothendieck ineq}
\omega_2(u)\leq\pi(u)\leq K_{G}\omega_2(u) \text{ for } u\in\ell_\infty^m\otimes\ell_\infty^n,
\end{equation}
where $K_G$ is Grothendieck's constant and $n, m\in\mathbb{N}.$ This form of Grothendieck's theorem appears as \cite[14.4]{defantfloret}. We suggest the reader the monograph \cite{pisier} on the subject.

  According to (\ref{eq: ternaria}), let us define $\otimes_{\omega_2}$ as the Hilbertian  tensor product of $0$-symmetric convex bodies associated to the tensor norm  $\omega_2$ and the ideal $\|\cdot\|_{H}$. Thus, by means of Theorem \ref{thm: main theorem the bijection}, Grothendieck`s theorem can be written in terms of $0$-symmetric convex bodies as:
\begin{equation}
\label{eq: groth ineq elinfty}
   {K_G}^{-1}   B_{\infty}^m\otimes_{\omega_2}B_{\infty}^n \subset B_{\infty}^m\otimes_{\pi}
     B_{\infty}^n\subset B_{\infty}^m\otimes_{\omega_2}B_{\infty}^n.
   \end{equation}
By Theorem \ref{thm: main theorem the bijection}, $\otimes_{\omega_2}$   is an injective tensor product. Combining these results, it also holds:

   \begin{prop}\label{prop: Groth Thm mayor inyectiva}Let $P_i\subset\mathbb{E}_i$, $i=1,2$ be $0$-symmetric convex bodies. Then
    \begin{equation}\label{eq: groth ineq elinfty}
   {K_G}^{-1}   P_1\otimes_{\omega_2}P_2 \subset P_1\otimes_{{\pi^{inj}}}
    P_2\subset P_1\otimes_{\omega_2}P_2.
   \end{equation}
   \end{prop}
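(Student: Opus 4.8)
The plan is to split the two-sided inclusion into its two halves, after first recording that \emph{both} $\otimes_{\omega_2}$ and $\otimes_{\pi^{inj}}$ are injective tensor products of $0$-symmetric convex bodies: the former because $\omega_2$ is an injective tensor norm and $\varPsi$ preserves injectivity by (ii) of Theorem \ref{thm: main theorem the bijection}, the latter by its very construction in Proposition \ref{prop:closest injective product}. The right-hand inclusion $P_1\otimes_{\pi^{inj}}P_2\subseteq P_1\otimes_{\omega_2}P_2$ is the soft one. Since every tensor product of convex bodies contains $\otimes_\pi$, we have $\otimes_\pi\subseteq\otimes_{\omega_2}$, and $\otimes_{\omega_2}$ is injective; as $\otimes_{\pi^{inj}}$ is by definition the smallest injective tensor product containing $\otimes_\pi$, the minimality in Proposition \ref{prop:closest injective product} forces $\otimes_{\pi^{inj}}\subseteq\otimes_{\omega_2}$, which is exactly the desired inclusion.

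The left-hand inclusion $K_G^{-1}P_1\otimes_{\omega_2}P_2\subseteq P_1\otimes_{\pi^{inj}}P_2$ carries the Grothendieck content. First I would settle the case of cubes: the tensor-norm form of Grothendieck's Theorem (\ref{eq: Grothendieck ineq}) translates, through the bijection $\varPsi$, into $K_G^{-1}B_\infty^m\otimes_{\omega_2}B_\infty^n\subseteq B_\infty^m\otimes_\pi B_\infty^n$, and by Corollary \ref{coro: en linfty coinciden} the right-hand side equals $B_\infty^m\otimes_{\pi^{inj}}B_\infty^n$; hence $K_G^{-1}B_\infty^m\otimes_{\omega_2}B_\infty^n\subseteq B_\infty^m\otimes_{\pi^{inj}}B_\infty^n$. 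The remaining task is to propagate this estimate from cubes to arbitrary bodies, using that both products respect sections.

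To carry out the transfer I would fix $\epsilon>0$ and choose $(1+\epsilon)$-embeddings $j_i:(\mathbb{E}_i,g_{P_i})\hookrightarrow\ell_\infty^{N_i}$, obtained from finitely many norm-one functionals that $(1+\epsilon)$-norm $(\mathbb{E}_i,g_{P_i})$. Writing $V_i:=j_i(\mathbb{E}_i)$ and $B_{V_i}:=B_\infty^{N_i}\cap V_i$ for the unit ball of the subspace, one has
\[
(1+\epsilon)^{-1}B_{V_i}\subseteq j_i(P_i)\subseteq B_{V_i}.
\]
Intersecting the cube estimate with the subspace $V_1\otimes V_2$ and applying the section identity (\ref{eq: def injectivity}) for both $\otimes_{\omega_2}$ and $\otimes_{\pi^{inj}}$ (together with the fact that a subspace is invariant under scaling) yields $K_G^{-1}B_{V_1}\otimes_{\omega_2}B_{V_2}\subseteq B_{V_1}\otimes_{\pi^{inj}}B_{V_2}$. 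Feeding in the two displayed inclusions through monotonicity of $\otimes_\alpha$ (the uniform property applied to the identities) and Proposition \ref{prop:linear invariance of products} applied to the linear isomorphisms $j_i:\mathbb{E}_i\to V_i$, I obtain
\[
K_G^{-1}\,(j_1\otimes j_2)\!\left(P_1\otimes_{\omega_2}P_2\right)\subseteq(1+\epsilon)^2\,(j_1\otimes j_2)\!\left(P_1\otimes_{\pi^{inj}}P_2\right).
\]
Pulling back along the isomorphism $j_1\otimes j_2:\otimes_i\mathbb{E}_i\to V_1\otimes V_2$ gives $K_G^{-1}(P_1\otimes_{\omega_2}P_2)\subseteq(1+\epsilon)^2(P_1\otimes_{\pi^{inj}}P_2)$, and letting $\epsilon\to0$ (the bodies being closed, $\bigcap_{\lambda>1}\lambda C=C$) produces the left inclusion.

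The main obstacle is precisely this transfer, not the base case. The difficulty is that a general $0$-symmetric body is a section of a finite cube only up to a factor $1+\epsilon$, so one cannot invoke Theorem \ref{thm: la menor injectiva} verbatim and must instead run the approximation and pass to a limit; keeping track of which inclusion points which way and on which side the scaling factor lands is where the bookkeeping is delicate. Conceptually, however, the statement is transparent: injectivity of both $\otimes_{\omega_2}$ and $\otimes_{\pi^{inj}}$ means each is determined on all bodies by its restriction to cubes, and on cubes they coincide up to the factor $K_G$ by classical Grothendieck, so the proposition is just the spelling-out of this principle.
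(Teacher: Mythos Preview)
Your argument is correct. The paper's own proof, however, takes a much shorter route: it simply cites the known tensor--norm fact that the injective associate $/\pi\backslash$ and the Hilbertian norm $\omega_2$ are $K_G$--equivalent on all finite-dimensional spaces (Ryan, Theorem~7.29), and then invokes the bijection $\varPsi$ of Theorem~\ref{thm: main theorem the bijection} together with Proposition~\ref{prop: injectiva y projectiva a partir del teorema principal} to translate this directly into the inclusion~(\ref{eq: groth ineq elinfty}).

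What you do instead is to reprove that tensor--norm fact from scratch in the convex-body language: you reduce to cubes via the base case~(\ref{eq: Grothendieck ineq}), use Corollary~\ref{coro: en linfty coinciden} to identify $\otimes_\pi$ with $\otimes_{\pi^{inj}}$ on cubes, and then propagate to arbitrary bodies through injectivity of both products and an $(1+\epsilon)$--approximation by sections of finite cubes. This is exactly the standard proof of the cited tensor--norm result, transplanted. Your approach is self-contained and makes the mechanism (injectivity plus Grothendieck on $\ell_\infty^n$) fully explicit; the paper's approach is more economical because it treats the proposition as an immediate corollary of the dictionary built in Section~\ref{sec: tensor product 0symm}, which is really the point it wants to advertise.
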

\begin{proof} The result in terms of tensor norms can be found in  \cite[Theorem 7.29]{Ryan2013}. Now, it is enough to  apply  the transition   to the $0$-symmetric convex bodies setting,  that
 Theorem \ref{thm: main theorem the bijection} and  Proposition \ref{prop: injectiva y projectiva a partir del teorema principal} provide.
\end{proof}

     These results give rise to  an explicit representation (up to a  constant) of the tensor product $\otimes_{\omega_2}$. In the following corollary it is worth noticing that, by   Dvoretsky`s Theorem (\cite[Theorem 5.1.2]{MilmanAGA}), one may  consider $M_i\leq 1+\epsilon$ for an arbitrary $\epsilon >0$, at the price of growing the dimensions $d_i$:

  \begin{cor}\label{coro: Groth Thm}
  Let $P_i\subset\mathbb{E}_i$, $i=1,2,$ be $0$-symmetric convex bodies and let  $j_i:(\mathbb{E}_{i}, g_{P_i})\hookrightarrow \ell_{\infty}^{d_i}$ be  embeddings with $\|j_i\|=1, \,\|(j_i)^{-1}\|\leq M_i$, as in Proposition \ref{prop: finite projective injective}. Then
 \begin{align*}P_{1}\otimes_{\omega_2}
P_{2}\subset K_G  (j_1& \otimes j_2)^{-1} \big\{B_{\infty}^{d_1}\otimes_{\pi} B_{\infty}^{d_2}\cap (j_1\otimes j_2)(\mathbb{E}_{1}\otimes \mathbb{E}_{2})\big\}  \\ & \subset K_GM_1 M_2
P_{1}\otimes_{\omega_2}P_2.
\end{align*}
In the case where $P_1:=\mathcal{E}_1$ is an  ellipsoid (the case  $P_2:=\mathcal{E}_2$ is analogue)
\[
{\mathcal{E}}_1\otimes_{\epsilon}P_2=
{\mathcal{E}}_1\otimes_{\omega_2}P_2
\stackrel{K_GM_1 M_2}{\simeq}
 (j_1\otimes j_2)^{-1} \big\{B_{\infty}^{d_1}\otimes_{\pi} B_{\infty}^{d_2}\cap (j_1\otimes j_2)(\mathbb{E}_{1}\otimes \mathbb{E}_{2})\big\}  .
\]
  \end{cor}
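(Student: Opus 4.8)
The plan is to assemble the displayed statement by concatenating the two estimates already at our disposal, and then to treat the ellipsoidal case separately by identifying $\omega_2$ with the injective norm on a Hilbert factor. Throughout write $A:=P_1\otimes_{\omega_2}P_2$, $B:=P_1\otimes_{\pi^{inj}}P_2$ and $R:=(j_1\otimes j_2)^{-1}\{B_\infty^{d_1}\otimes_\pi B_\infty^{d_2}\cap(j_1\otimes j_2)(\mathbb{E}_1\otimes\mathbb{E}_2)\}$. Proposition \ref{prop: Groth Thm mayor inyectiva} supplies $K_G^{-1}A\subset B\subset A$, while the first part of Proposition \ref{prop: finite projective injective} (for $l=2$) supplies the two–sided inclusion $B\subset R\subset M_1M_2\,B$. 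The orientation here is the one forced by the hypotheses $\|j_i\|=1$ and $\|(j_i)^{-1}\|\le M_i$, which read $P_i\subseteq j_i^{-1}(B_\infty^{d_i}\cap j_i(\mathbb{E}_i))\subseteq M_iP_i$, so that monotonicity of $\otimes_{\pi^{inj}}$ under inclusions (its uniform property) places $B$ below $R$ and $R$ below $M_1M_2\,B$.

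From these two facts the first assertion is immediate. Combining $K_G^{-1}A\subset B$ with $B\subset R$ gives $A\subset K_GR$; combining $R\subset M_1M_2\,B$ with $B\subset A$ gives $K_GR\subset K_GM_1M_2\,A$. Together these are exactly the displayed chain $P_1\otimes_{\omega_2}P_2\subset K_GR\subset K_GM_1M_2\,P_1\otimes_{\omega_2}P_2$, so no further computation is needed for the general case.

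For the ellipsoidal statement the only genuinely new ingredient is the identity $\mathcal{E}_1\otimes_\epsilon P_2=\mathcal{E}_1\otimes_{\omega_2}P_2$, which I would obtain through the correspondence of diagram (\ref{eq: ternaria}). Since $\mathcal{E}_1$ is an ellipsoid, $(\mathbb{E}_1,g_{\mathcal{E}_1})$ is a Hilbert space $H$; the operator associated with $u\in H\otimes N$ under Corollary \ref{cor: ideals tensor norms tensor convex} then has a Hilbert space as domain or range (the dual of a Hilbert space is again Hilbert), so it factors trivially through that very space and its factorization norm $\|\cdot\|_H$ collapses to its operator norm, i.e. $\omega_2(u)=\epsilon(u)$. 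In convex–body terms this is $\mathcal{E}_1\otimes_{\omega_2}P_2=\mathcal{E}_1\otimes_\epsilon P_2$, an equality that may equally be quoted from \cite{defantfloret} or \cite{Ryan2013}. With it in hand, the first part applied to $P_1=\mathcal{E}_1$ rearranges to $\frac{1}{K_G}\,\mathcal{E}_1\otimes_{\omega_2}P_2\subset R\subset M_1M_2\,\mathcal{E}_1\otimes_{\omega_2}P_2$, which is precisely the relation $\mathcal{E}_1\otimes_{\omega_2}P_2\stackrel{K_GM_1M_2}{\simeq}R$ in the sense of $\delta^{BM}$.

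The two chainings are entirely routine; the step deserving care is the ellipsoidal identity $\omega_2=\epsilon$ on a Hilbert factor, since it is the only place where the special geometry of $\mathcal{E}_1$ (rather than a generic $0$-symmetric body) enters, and where one must pass correctly between the tensor–norm, operator–ideal and convex–body descriptions provided by Theorem \ref{thm: main theorem the bijection} and Corollary \ref{cor: ideals tensor norms tensor convex}. I would also verify the orientation of the relation $\stackrel{M_1M_2}{\simeq}$ in Proposition \ref{prop: finite projective injective} at the outset, so that the constants $K_G$ and $M_1M_2$ land on the intended sides of the final inclusions.
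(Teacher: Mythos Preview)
Your proposal is correct and follows essentially the same route as the paper: the first chain is obtained exactly by combining Proposition~\ref{prop: Groth Thm mayor inyectiva} with Proposition~\ref{prop: finite projective injective}, and the ellipsoidal identity $\mathcal{E}_1\otimes_{\epsilon}P_2=\mathcal{E}_1\otimes_{\omega_2}P_2$ is derived, as in the paper, from the observation that $(\mathcal{L}(H_1^{*},F),\|\cdot\|)=(\mathcal{L}(H_1^{*},F),\|\cdot\|_H)$ via the correspondence (\ref{eq: ternaria}). Your explicit check of the orientation $B\subset R\subset M_1M_2\,B$ is a welcome detail that the paper leaves implicit.
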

\begin{proof} The first part follows from Proposition \ref{prop: Groth Thm mayor inyectiva}
 and Proposition \ref{prop: finite projective injective}.
 To prove the assertion on the tensor product of ellipsoids, we are using the notation  ${\simeq}$  to mean the norm of the   isomorphisms, which is  as above.

 To prove the assertion on  ellipsoids, let us denote $F:=(\mathbb{E}_2,g_{P_2})$. Observe that $(\mathcal{L}(H_1^*,F),\|\cdot\|)$ is isometric to
 $(\mathcal{L}(H_1^*,F),\|\cdot\|_H)$ since, trivially,  every bounded operator factorizes through a Hilbert space. By Theorem \ref{thm: main theorem the bijection} and (\ref{eq: ternaria}), this means that
 ${\mathcal{E}}_1\otimes_{\epsilon}P_2=
 {\mathcal{E}}_1\otimes_{\omega_2}P_2$.
 The isomorphic expression as a section of the projective tensor product of $\ell_{\infty}$-spaces follows from the previous one.
 \end{proof}

The dual tensor product $\otimes_{\omega_2^{\prime}}$ is well defined, according to Definition \ref{defn: dual tensor prod of cb}. Also, it is a projective tensor product. By Theorem \ref{thm: main theorem the bijection}, it corresponds to the dual tensor norm $\omega_2^{\prime}.$
Even more, from Proposition \ref{prop: Groth Thm mayor inyectiva}, it follows that
$\otimes_{\omega_2^{\prime}}$   is equivalent to the biggest projective tensor product ${\otimes_{\epsilon^{proj} }}$. That is:
\begin{cor}
\label{eq:dual Hibertian product}
For every pair of $0$-symmetric convex bodies $P_i\subset\mathbb{E}_i,$ $i=1,2,$ the following holds:
\[
P_1\otimes_{\omega^{\prime}_2}P_2 \subset P_1\otimes_{{\epsilon^{proj}}}
    P_2\subset {K_G}   P_1\otimes_{\omega^{\prime}_2}P_2.
\]
\end{cor}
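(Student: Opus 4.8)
The plan is to derive the statement from Proposition \ref{prop: Groth Thm mayor inyectiva} by applying it to the polar bodies $P_1^{\circ},P_2^{\circ}$ and then passing to polars, exploiting that $\otimes_{\epsilon^{proj}}$ and $\otimes_{\omega_2^{\prime}}$ are precisely the duals of $\otimes_{\pi^{inj}}$ and $\otimes_{\omega_2}$, respectively. First I would record the two identities that make this work. By Definition \ref{defn: dual tensor prod of cb}, applied once with $\alpha=\pi^{inj}$ and once with $\alpha=\omega_2$, one has
\[
P_1\otimes_{(\pi^{inj})^{\prime}}P_2=(P_1^{\circ}\otimes_{\pi^{inj}}P_2^{\circ})^{\circ},\qquad P_1\otimes_{\omega_2^{\prime}}P_2=(P_1^{\circ}\otimes_{\omega_2}P_2^{\circ})^{\circ}.
\]
Moreover, from the definition of the projective hull (\ref{eq: projective hull}) together with the fact that the dual of $\otimes_{\epsilon}$ is $\otimes_{\pi}$ (part (1) of Proposition \ref{prop:property dual product}), we get $\otimes_{\epsilon^{proj}}=\otimes_{((\epsilon^{\prime})^{inj})^{\prime}}=\otimes_{(\pi^{inj})^{\prime}}$, so the first identity reads $P_1\otimes_{\epsilon^{proj}}P_2=(P_1^{\circ}\otimes_{\pi^{inj}}P_2^{\circ})^{\circ}$.

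Next I would apply Proposition \ref{prop: Groth Thm mayor inyectiva} to the $0$-symmetric convex bodies $P_1^{\circ},P_2^{\circ}$, which yields
\[
K_G^{-1}\,P_1^{\circ}\otimes_{\omega_2}P_2^{\circ}\subset P_1^{\circ}\otimes_{\pi^{inj}}P_2^{\circ}\subset P_1^{\circ}\otimes_{\omega_2}P_2^{\circ}.
\]
I would then take polars of the whole chain. Polarity reverses inclusions and, since $(cK)^{\circ}=c^{-1}K^{\circ}$ for $c>0$, it sends the scaled body $K_G^{-1}(P_1^{\circ}\otimes_{\omega_2}P_2^{\circ})$ to $K_G(P_1^{\circ}\otimes_{\omega_2}P_2^{\circ})^{\circ}$. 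Using the identities above to rewrite the polars, the left inclusion becomes $P_1\otimes_{\omega_2^{\prime}}P_2\subset P_1\otimes_{\epsilon^{proj}}P_2$, while the right inclusion becomes $P_1\otimes_{\epsilon^{proj}}P_2\subset K_G\,P_1\otimes_{\omega_2^{\prime}}P_2$. Combining the two gives exactly the claimed sandwich.

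I do not expect a genuine obstacle here; the argument is purely a duality bookkeeping once Proposition \ref{prop: Groth Thm mayor inyectiva} is in hand. The only points requiring care are that polarization reverses the inclusions and turns the factor $K_G^{-1}$ into $K_G$ (so the constant ends up on the correct side), and that the polar of $P_i^{\circ}\otimes_{\bullet}P_j^{\circ}$ is identified via Definition \ref{defn: dual tensor prod of cb} as a dual product of $P_i,P_j$ themselves (no spurious double polars on the factors), with the identification $\otimes_{\epsilon^{proj}}=\otimes_{(\pi^{inj})^{\prime}}$ invoked so that the object produced by polarization matches the projective hull appearing in the statement.
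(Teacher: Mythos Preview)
Your argument is correct and is exactly the duality derivation the paper has in mind: it states the corollary as an immediate consequence of Proposition \ref{prop: Groth Thm mayor inyectiva} without spelling out the polar computation. The identification $\otimes_{\epsilon^{proj}}=\otimes_{(\pi^{inj})^{\prime}}$ via (\ref{eq: projective hull}) and Proposition \ref{prop:property dual product}(1), followed by polarizing the chain applied to $P_1^{\circ},P_2^{\circ}$, is precisely the intended route.
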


As a consequence of the last corollary and Proposition \ref{prop: finite projective injective}, both of the statements in Corollary \ref{coro: Groth Thm}  have dual expresions for the tensor products $\otimes_{\omega_2^{\prime}}$ and $\otimes_{\epsilon}.$

Let  $P_i\subset\mathbb{E}_i$ be $0$-symmetric convex bodies and $q_i:\ell_{1}^{d_i}\hookrightarrow(\mathbb{E}_{i}, g_{P_i}),$ $i=1,2,$ be surjective linear maps such that $q_{i}(B_1^{d_i})\subset P_i\subset M_iq_{i}(B_1^{d_i})$, for $M_i>0,$ $i=1,2$. Then, 
\begin{equation}
\label{eq:dual Groth Thm 1}
\frac{1}{M_1M_2}P_{1}\otimes_{\omega_2^{\prime}}
P_{2}\subset  (q_1 \otimes q_2) (B_{1}^{d_1}\otimes_{\epsilon} B_{1}^{d_2})\subset K_G
P_{1}\otimes_{\omega_2^{\prime}}P_{2}.
\end{equation}
In the case where $P_1:=\mathcal{E}_1$ is an ellipsoid, it holds:
\begin{equation}
\label{eq:dual Groth Thm 2}
{\mathcal{E}}_1\otimes_{\pi}P_2=
{\mathcal{E}}_1\otimes_{\omega_2^{\prime}}P_2
\stackrel{K_GM_1 M_2}{\simeq}
 (q_1\otimes q_2) (B_{1}^{d_1}\otimes_{\epsilon} B_{1}^{d_2}).
\end{equation}

It is worth to notice that, in the context of tensor norms, Corollary \ref{eq:dual Hibertian product} corresponds to \cite[Corollary 7.30]{Ryan2013}. For a detail treatment of the norm $\omega_2^{\prime},$ we suggest the reader \cite[Section 7.4]{Ryan2013}. 

%

\end{document}